\DeclareMathOperator{\Spec}{Spec}
\DeclareMathOperator{\iz}{int}
\newcommand{\ra}{\rightarrow}
\newcommand{\ssm}{\smallsetminus}
\begin{document}

\newtheorem{thm}{Theorem}[section]
\newtheorem{lem}[thm]{Lemma}
\newtheorem{prop}[thm]{Proposition}
\newtheorem{cor}[thm]{Corollary}
\newtheorem{ex}[thm]{Example}
\newtheorem{rem}[thm]{Remark}
\newtheorem{prob}[thm]{Problem}
\newtheorem{nota}[thm]{Notation}

\newtheorem{thmA}{Theorem}
\renewcommand{\thethmA}{}

\theoremstyle{definition}

\newtheorem{defi}[thm]{Definition}
\renewcommand{\thedefi}{}

\input amssym.def

\long\def\alert#1{\smallskip{\hskip\parindent\vrule%
\vbox{\advance\hsize-2\parindent\hrule\smallskip\parindent.4\parindent%
\narrower\noindent#1\smallskip\hrule}\vrule\hfill}\smallskip}

\def\ff{\frak}
\def\Spec{\mbox{\rm Spec}}
\def\type{\mbox{ type}}
\def\Hom{\mbox{ Hom}}
\def\rk{\rm{rk}}
\def\Ext{\mbox{ Ext}}
\def\Ker{\mbox{ Ker}}
\def\Max{\mbox{\rm Max}}

\def\End{\mbox{\rm End}}
\def\l{\langle\:}
\def\r{\:\rangle}
\def\Rad{\mbox{\rm Rad}}
\def\Zar{\mbox{\rm Zar}}
\def\Supp{\mbox{\rm Supp}}
\def\Rep{\mbox{\rm Rep}}
\def\Inv{{\rm Inv}}
\def\Div{{\rm Div}}
\def\Prin{{\rm Prin}}
\def\cal{\mathcal}
\def\O{{\cal O}}
\def\Ms{{\cal M}_{\#}}
\def\Md{{\cal M}_{\dagger}}
\newcommand\sinv[1]{#1\raisebox{1.15ex}{$\scriptscriptstyle-\!1$}}
\def\Maxi{\sinv{\mbox{\rm Max}}}

\newcommand{\Z}{\mathbb{Z}}
\newcommand{\N}{\mathbb{N}}

\title[Group-theoretic and topological invariants]{Group-theoretic and topological invariants of
completely integrally closed Pr\"ufer domains}

\thanks{2010 {\it Mathematics Subject Classification.}
Primary 13F05, 13A15, 13A05; Secondary 06F15.}
\thanks{\today}

\author{Olivier A.~Heubo-Kwegna,  Bruce Olberding and Andreas Reinhart}

\address{Department of Mathematical Sciences, Saginaw Valley State University,
7400 Bay Road, University Center, MI 48710-0001}
\email{oheubokw@svsu.edu}

%\address{Department of Mathematics and Statistics, Bowling Green State University, Bowling Green,
 %OH, 43403}
%\email{warrenb@bgnet.bgsu.edu}

%\address{H. L. Wilkes Honors College, Florida Atlantic University,
% Jupiter, Fl, 33458}
%\email{warren.mcgovern@fau.edu}

\address{Department of Mathematical Sciences, New Mexico State University, Las Cruces, NM 88003}
\email{olberdin@nmsu.edu}

\address{Institut f\"ur Mathematik und wissenschaftliches Rechnen, Karl-Franzens-Universit\"at Graz, NAWI Graz, Heinrichstrasse 36, 8010 Graz, Austria}
\email{andreas.reinhart@uni-graz.at}

\begin{abstract}  We consider the lattice-ordered groups $\Inv(R)$ and $\Div(R)$ of invertible and divisorial fractional ideals  of a completely integrally closed Pr\"ufer domain. We prove that $\Div(R)$ is the completion of the group $\Inv(R)$, and we show there is a faithfully flat extension $S$ of $R$ such that   $S$ is a completely integrally closed B\'ezout domain with $\Div(R) \cong \Inv(S)$.  Among the class of completely integrally closed Pr\"ufer domains, we focus on  the one-dimensional Pr\"ufer domains.
This class  includes  Dedekind domains, the latter being the one-dimensional Pr\"ufer domains  whose maximal ideals are finitely generated.  However, numerous interesting examples
 show that the class of one-dimensional Pr\"ufer domains
includes domains that differ quite significantly from Dedekind domains
  by a number of measures, both group-theoretic (involving $\Inv(R)$ and $\Div(R)$) and topological (involving the maximal spectrum of $R$).  We examine these invariants in connection with factorization properties of the ideals of one-dimensional Pr\"ufer domains, putting special emphasis on the class of almost Dedekind domains, those domains for which every localization at a maximal ideal is a rank one discrete valuation domain, as well as the class of SP-domains, those domains for which every proper ideal is a product of radical ideals. For this last class of domains, we show that if in addition the ring has nonzero Jacobson radical, then  the lattice-ordered groups $\Inv(R)$ and $\Div(R)$ are determined entirely by the topology of the maximal spectrum of $R$, and that the Cantor-Bendixson derivatives of the maximal spectrum reflect  the distribution of sharp and dull maximal ideals.
\vspace{0.50in}

{\noindent}Key words: Pr\"ufer domain, almost Dedekind domain, SP-domain, $\ell$-group, Boolean space, Cantor-Bendixson derivative.
\end{abstract}

\maketitle

\section{Introduction}

All rings considered in this article are commutative with unity.
An integral domain $R$ with quotient field $F$ is {\it completely integrally closed} if for each $x \in F \setminus R$ and $0 \ne r \in R$, there exists $n >0$ such that $rx^n \not \in R$.
 A {\it Pr\"ufer domain} $R$ is an  integral domain for which every nonzero finitely generated ideal is invertible; equivalently, for each maximal ideal $M$ of $R$, $R_M$ is a valuation domain.
 If in addition every invertible ideal is principal, then  $R$ is a {\it B\'ezout domain}.
  The class of completely integrally closed Pr\"ufer domains includes a number of prominent examples in non-Noetherian commutative ring theory, such as the ring of integer-valued polynomials \cite[Proposition VI.2.1, p.~129]{CC}, the ring of entire functions \cite[Proposition 8.1.1(6), p.~276]{FHP}, the real holomorphy ring of a function field \cite[Corollary 3.6]{OlbH}, and the Kronecker function ring of a field extension of at most countable transcendence degree \cite[Corollary 3.9]{Heu}.  For a completely integrally closed Pr\"ufer domain $R$, the set $\Div(R)$ of nonzero divisorial fractional ideals of $R$ is a lattice-ordered group ($\ell$-group) with respect to ideal multiplication, as is the group $\Inv(R)$ of invertible fractional ideals of $R$.   We prove in Proposition~\ref{completely integrally prufer}
   that $\Div(R)$ is the  completion of $\Inv(R)$, a consequence of which is that  the group $\Div(R)$ can be calculated solely from the $\ell$-group $\Inv(R)$.
 We  show also in Theorem~\ref{dense subring} that for each completely integrally closed Pr\"ufer domain $R$, there exists a faithfully flat extension $S$ of $R$ such that $S$ is a completely integrally closed B\'ezout domain with  $\Div(S) = \Inv(S) \cong \Inv(R)$. For this ring $S$, every divisorial ideal is principal.

 After treating the general case of completely integrally closed Pr\"ufer domains in Section 3, we focus for the rest of the article on the special case of one-dimensional Pr\"ufer domains. While this class includes the class of Dedekind domains,
 even among the class of one-dimensional Pr\"ufer domains, the Dedekind domains are quite special.
For example, consider  a nonzero proper ideal $I$ of a one-dimensional Pr\"ufer domain $R$. If $R$ is a Dedekind domain, then $\Spec(R/I)$ is a finite set. If $R$ is not necessarily Dedekind,
it can only be asserted that  $\Spec(R/I)$ is a {\it Boolean space}, that is, a compact Hausdorff zero-dimensional space \cite[p.~198]{Joh}. It follows from \cite[Section 3]{Olb} that every Boolean space arises as $\Spec(R/I)$ for an appropriate choice of one-dimensional Pr\"ufer domain $R$ with nonzero Jacobson radical $I$.  By Stone Duality, the category of Boolean spaces is dual to the category of Boolean algebras. Thus the class of one-dimensional Pr\"ufer domains is at least as rich and varied as the category of Boolean algebras.

In fact, the construction in \cite[Section 3]{Olb} shows that each Boolean space can be realized in this fashion by an almost Dedekind domain $R$ with nonzero Jacobson radical. (A domain $R$ is {\it almost Dedekind} if $R_M$ is a rank one discrete valuation ring (DVR) for each maximal ideal $M$ of $R$.) The ring $R$ in this case is even an {\it SP-domain}, a domain for which every proper ideal is a product of radical ideals. The isolated points in the maximal spectrum $\Max(R)$ of $R$ are precisely the finitely generated (hence invertible) maximal ideals of $R$ \cite[Lemma 3.1]{Olb}. Thus, if $\Max(R)$ is infinite and the Jacobson radical of $R$ is nonzero, then the compactness of $\Max(R)$ implies that  $R$ will have maximal ideals that are not finitely generated.  Because a great deal of variation is possible in the distribution of isolated points in a Boolean space, it follows that
 even among the almost Dedekind domains for which every proper ideal is a product of radical ideals,
  a wide range of  behavior with respect to finite generation of maximal ideals is possible.

The aforementioned existence results rely on the Krull-Kaplansky-Jaffard-Ohm Theorem to realize a certain lattice-ordered group as the group of divisibility of a {\it B\'ezout domain}, that is, a domain for which every finitely generated ideal is principal. As such they give   existence in what might be considered the somewhat exotic setting of overrings of polynomial rings with as many variables as there are elements of the group. However, much of this same wide range of behavior can be seen in number-theoretic inspired contexts involving infinite towers of finite field extensions \cite{BY, GR, Gil2, Has, Lop, LL, REK, VY,Y}. Interesting almost Dedekind domains also appear in the context of holomorphy rings   \cite{OlbMat}.

Motivated by the diversity of  these examples,  we focus on (1) the group-theoretic invariants $\Inv(R)$ and $\Div(R)$, which encode much of the multiplicative ideal theory of the domain, and (2) the topology of the maximal spectrum $\Max(R)$, which encodes information on the density of finitely generated maximal ideals among the collection of all maximal ideals of  $R$.  %The group $\Inv(R)$ plays for a Pr\"ufer domain a similar role to the well-studied  group of divisibility of a B\'ezout domain; see for example \cite{BK}.
We are especially interested in the case in which $R$ is an SP-domain with nonzero Jacobson radical, and in this case the group $\Inv(R)$ is entirely determined by $\Max(R)$ and vice versa (see Theorem~\ref{characterization}).  In this sense, much of the multiplicative ideal theory of  the domain $R$ is accounted for simply from the topology of  $\Max(R)$ alone.  For example, not only is $\Inv(R)$  determined by $\Max(R)$, the group $\Div(R)$ of nonzero fractional  divisorial ideals of $R$ can also be extracted from the topology of $\Max(R)$ by using the Gleason cover of the topological space $\Max(R)$ (Theorem~\ref{EX cor}).
Similarly,  we show in Section 6 how to reinterpret Loper and Lucas' notions of sharp and dull degrees from \cite{LL} for one-dimensional Pr\"ufer domains in purely topological terms. In doing so, we obtain some of their existence results as simple consequences of well-known topological results, and we extend these results from finite numbers to infinite ordinals, giving another measure of complexity of this class of rings.

The outline of the paper is as follows. After a brief discussion of background material in Section 2,
we develop in Section~3 a  multiplicative ``completion'' of a completely integrally closed Pr\"ufer domain by showing that for such a domain $R$, the group $\Div(R)$ is the lattice-ordered group completion of $\Inv(R)$ and  can be realized as $\Inv(S)$ for a B\'ezout domain $S$ extending $R$ (Theorem~\ref{dense subring}).

 In Section 4 we characterize SP-domains with emphasis on principal rather than invertible ideals, and we note in Theorem~\ref{SPG3} that if a one-dimensional domain $R$ has nonzero Jacobson radical $J(R)$ such that $J(R)$ is invertible, then $R$ is an SP-domain. If also $J(R)$ is principal, then $R$ is a B\'ezout domain.

In Section 5 we  show that for SP-domains with nonzero Jacobson radical, the groups $\Inv(R)$ and $\Div(R)$  are topological invariants of $\Max(R)$ (Theorems~\ref{characterization} and~\ref{EX cor}), and as a consequence we obtain that they are both free groups (Corollary~\ref{Bergman}).   This leads naturally to consideration of the group $\Div(R)/\Inv(R)$, and we show that it is torsion-free, but that it is divisible only when $\Div(R) =\Inv(R)$, with the latter condition characterized topologically also in terms of $\Max(R)$ (Theorem~\ref{torsion-free}).

In Section 6 we reframe Loper and Lucas' notion of the sharp and dull degrees of a one-dimensional Pr\"ufer domain in topological terms and use this observation to obtain existence results for such domains with prescribed sharp degrees.

\section{Preliminaries and Notation}

%\smallskip
%{\bf Define and discuss ${\rm Prin}(R)$, ${\rm Inv}(R)$, ${\rm Div}(R)$, and ${\rm Cl}(R)$. }
%\smallskip

Let $R$ be an integral domain with quotient field $F$.
We denote by
$J(R)$  the Jacobson radical of $R$.

\smallskip

{\bf (2.1)} {\it The group of invertible fractional ideals}.
 We denote by ${\rm Inv}(R)$ the group of invertible fractional ideals of $R$, and by
${\rm Prin}(R)$ the subgroup of ${\rm Inv}(R)$ consisting of the nonzero principal fractional ideals of $R$; i.e., ${\rm Prin}(R)$ is the {\it group of divisibility} of $R$.
For background on the group of divisibility of a domain, see \cite[Section 16]{Gil}.
  The  \textit{Picard group} of $R$ is  the quotient group ${\rm Pic}(R):={\rm Inv}(R)/{\rm Prin}(R)$.

\smallskip

{\bf (2.2)}  {\it The group of nonzero divisorial fractional ideals.}
The set of nonzero divisorial fractional ideals of $R$ is denoted
 ${\rm Div}(R)$. (Recall that a fractional ideal $I$ of $R$ is \textit{divisorial} if $(I^{-1})^{-1}=I$, where $I^{-1}=\{x\in F: xI\subseteq R\}$.)
  The domain $R$ is completely integrally closed if and only if $\mbox{Div}(R)$ is a group with the binary operation given by  $I\star J:=((IJ)^{-1})^{-1}$ for all $I,J \in {\rm Div}(R)$ \cite[Theorem 34.3]{Gil}.
  If $I,J$ are invertible fractional ideals of $R$, then $I \star J = IJ$.
   Thus
  both ${\rm Prin}(R)$ and ${\rm Inv}(R)$ are subgroups of ${\rm Div}(R)$. The quotient group ${\rm Cl}(R)={\rm Div}(R)/{\rm Prin}(R)$ is the \textit{class group} of $R$, and ${\rm Pic}(R)$ is a subgroup of ${\rm Cl}(R)$.
%{\bf Define and discuss $\ell$-groups.}

\smallskip

{\bf (2.3)} {\it Lattice-ordered groups}.
An abelian group $(G,+)$ is a   \textit{lattice ordered group ($\ell$-group)} if it is partially ordered with a relation $\leq$ such that the meet $x\wedge y$ and join $x\vee y$
of $x,y\in G$ exist and such that  for all $x,y,a\in G$, $x\leq y$ implies $a+x\leq a+y$. Throughout the paper, by an ``$\ell$-group'' we mean an abelian $\ell$-group.
%The $\ell$-group $G$ is \textit{complete} (see for example \cite{CA}) if each set of elements of $G$ that is bounded above by an element in the group has a least upper bound; equivalently, each set of elements that is bounded below has a greatest lower bound.
 An \textit{$\ell$-homomorphism} of $\ell$-groups $\phi:G\rightarrow G'$ is a group homomorphism  such that for all $x,y\in G$, $\phi (x\wedge y)=\phi(x)\wedge\phi(y)$ (and thus $\phi (x\vee y)=\phi(x)\vee\phi(y)$).

% {\bf (2.4)}  {\it The completion of an $\ell$-group.}
% The $\ell$-group $G$ is \textit{complete} if every subset of $G$ has a greatest lower bound and a least upper bound.
% An $\ell$-subgroup $G$ is \textit{dense} in an $\ell$-group $H$ if for $0<h\in H$, there is an $g\in G$ such that $0<g\leq h$. An $\ell$-group $G$ is \textit{archimedean} if whenever $a, b\in G$ with $na\leq b$ for all $n\in\mathbb{N}$, then $a\leq 0$.  The \textit{completion} of an $\ell$-group $G$ is the smallest complete $\ell$-group $H$ that contains $G$.  Each archimedean $\ell$-group has a completion {\bf [ref]}, and this completion  is characterized by the following lemma.

% \begin{lem}{\em (Conrad-McAlister \cite [Theorem 2.4]{CA})}\label{completion} Let $G$ be an archimedean $\ell$-subgroup of a complete $\ell$-group $H$. Then the following are equivalent.
%\begin{enumerate}
%\item[(1)] $H$ is the completion of $G$.
%\item[(2)] If $0<h\in H$, then $0<g_1\leq h\leq g_2$ for some $g_1, g_2\in G$.
%\item[(3)] $G$ is dense in $H$ and no proper $\ell$-subgroup of $H$ contains $G$ and is complete.
%\end{enumerate}
%\end{lem}
\smallskip

{\bf (2.4)}  {\it The $\ell$-group of  invertible fractional ideals of  a Pr\"ufer domain.}
The group ${\rm Inv}(R)$ admits a partial order given for all $I,J \in {\rm Inv}(R)$ by $I \leq J$ if and only if $I\supseteq J$. When $R$ is a Pr\"ufer domain, this partial order gives ${\rm Inv}(R)$ the structure of an $\ell$-group, with meet and join given, respectively, as $I \wedge J = I+J$ and  $I\vee J=I\cap J$ for all $I,J\in\mbox{Inv}(R)$ \cite[Theorem 2]{BK}. (In a  Pr\"{u}fer domain, finite intersections and finite sums of invertible ideals are invertible. In fact from \cite[Theorem 25.2(d)]{Gil}, we have $(I+J)(I\cap J)=IJ$. Now since $IJ$ is invertible, $I\cap J$ is also invertible.)  When also $R$ is a B\'ezout domain, ${\rm Inv}(R) = {\rm Prin}(R)$, and hence in this case ${\rm Prin}(R)$, the group of divisibility of $R$,  is an $\ell$-group.
 By the Krull-Kaplansky-Jaffard-Ohm Theorem \cite[Theorem 5.3, p.~113]{FS}, each  $\ell$-group is isomorphic to the group of divisibility of a B\'{e}zout domain.

%\smallskip

%{\bf (2.5)} {\it The complete $\ell$-group of nonzero divisorial fractional ideals of a completely integrally closed  domain.}
%When $R$ is a completely integrally closed domain, the group
% $\mbox{Div}(R)$ of nonzero divisorial fractional ideals of $R$  can be  partially ordered by $I \leq J$ if and only if $J \subseteq I$. With this partial order, Div$(R)$ admits the structure of a complete $\ell$-group,
% with  arbitrary join and meet given, respectively, by
% $\bigvee_{\alpha} I_\alpha =\bigcap_\alpha I_\alpha$ and
 % $\bigwedge I_{\alpha}= ((\sum_{\alpha} I_\alpha)^{-1})^{-1}$.  If also $R$ is a Pr\"ufer domain, then $\Inv(R)$ is an $\ell$-subgroup of $\Div(R)$.

\section{Completely integrally closed Pr\"ufer domains}

As discussed in (2.4), the group $\Inv(R)$ of  invertible fractional ideals of a Pr\"ufer domain $R$ is an $\ell$-group. We show in Proposition~\ref{completely integrally prufer} that  if also $R$ is a completely integrally closed domain, then $\Inv(R)$ is an archimedean $\ell$-group, and hence admits a completion that proves to be the group $\Div(R)$ of nonzero divisiorial fractional ideals of $R$.
We develop a ring-theoretic analogue of this  by showing that every completely integrally closed Pr\"ufer domain densely embeds in a pseudo-Dedekind B\'ezout domain. The {\it pseudo-Dedekind domains}, which play the role of the ``complete'' objects here, are the domains for which every nonzero divisorial ideal is invertible.
 (The terminology of ``pseudo-Dedekind,'' which is due to Anderson and Kang \cite{AK}, is motivated by Bourbaki's use of ``pseudo-principal'' to describe the domains for which every divisorial ideal is principal \cite[Exercise VII.1.21]{Bou}. Zafrullah \cite{Zaf} uses the term ``generalized Dedekind'' for what we term ``pseudo-Dedekind'' here.)
  In analogy with the fact that only archimedean $\ell$-groups have a completion, Proposition~\ref{completely integrally prufer} shows that  we must restrict to completely integrally closed Pr\"ufer domains.
  % We show in this same proposition that the completion of $\Inv(R)$  is the group  $\Div(R)$ of divisorial fractional ideals of $R$. %We then show $R$ sits ``densely'' in a faithfully flat extension $S$ such that $\Inv(S) \cong \Div(S)$. Finally, we replace $S$ with a B\'ezout domain with the same group of invertible fractional ideals.
We use throughout this section that a Pr\"ufer domain is completely integrally closed if and only if $\bigcap_{n>0} I^n = 0$ for all proper finitely generated ideals $I$ \cite[Theorem 8]{GH}.

%However, it is well-known that the converse is not true. Now suppose $R$ is a domain such that ${\rm Inv}(R)$ is a complete $\ell$-group. Let $I$ be a divisorial ideal of $R$. Then $I=\cap_{a\in F^{\ast}} aR$ with $I\subseteq aR$. Now for $0\neq x\in I$, we have $xR\subseteq aR$, i.e., $aR\leq xR$. So $\{a\}$ is a collection in ${\rm Inv}(R)$ that is bounded above and by the completeness of ${\rm Inv}(R)$, $J:=\bigvee aR\in {\rm Inv}(R)$. Now for each $y\in I$, $aR\leq yR$ and so $J=\bigvee aR\leq yR$. Thus $yR\subseteq J$ and $J=I$ is invertible. Hence if $R$ is a domain such that ${\rm Inv}(R)$ is a complete $\ell$-group, then $R$ is pseudo-Dedekind. The converse is also true.

%Let $R$ be a Pr\"ufer domain. Then as discussed in {\bf [ref]}, ${\rm Inv}(R)$ is  an $\ell$-group.

We recall first the notion of the (conditional) completion of an $\ell$-group; for additional details, see \cite[pp.~312--313]{Bir} and \cite{CA}.
An $\ell$-group $G$ is \textit{complete} if every set of elements of $G$ that is bounded below has a greatest lower bound (equivalently, every set of elements that is bounded above has a least upper bound).
 A complete $\ell$-group is necessarily {\it archimedean} \cite[Lemma 5, p.~291]{Bir}, meaning that
whenever $a, b\in G$ with $na\leq b$ for all $n >0$, then $a\leq 0$.

 An $\ell$-subgroup $G$ is \textit{dense} in an $\ell$-group $H$ if for $0<h\in H$, there is  $g\in G$ such that $0<g\leq h$.
  The \textit{completion} of an $\ell$-group $G$ is the smallest complete $\ell$-group $H$ that contains $G$.  Each archimedean $\ell$-group $G$ has a completion $H$, which is characterized by the  properties that (a) $H$ is a complete $\ell$-group containing $G$ as subgroup, and (b) for each $0 < h \in H$, there exist $g_1,g_2 \in G$ such that $0 < g_1 \leq h \leq g_2$ \cite[Theorem 2.4]{CA}.  In particular, $G$ is dense in its completion.

  % $G$ is dense in $H$ and no proper $\ell$-subgroup of $H$ contains $G$ and is complete \cite[Theorem 2.4]{CA}.

  In the next proposition we apply these ideas to the $\ell$-group $\Inv(R)$ for $R$  a Pr\"ufer domain. Note that
    ${\rm Inv}(R)$ is archimedean if whenever $I,J \in {\rm Inv}(R)$ with $J \subseteq I^n$ for all $n >0$, then $R \subseteq I$.

%We characterize in the next proposition when ${\rm Inv}(R)$ is archimedean, and show that when it is, ${\rm Div}(R)$ is the completion of $R$.

\begin{prop}\label{completely integrally prufer}  \label{completion of prufer} A Pr\"ufer domain $R$ is completely integrally closed if and only if ${\rm Inv}(R)$ is an archimedean $\ell$-group. If this is the case, the $\ell$-group
  ${\rm Div}(R)$ is the completion of the $\ell$-group ${\rm Inv}(R)$.
\end{prop}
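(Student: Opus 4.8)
The plan is to prove the equivalence first and then identify $\mathrm{Div}(R)$ with the completion. For the forward direction, suppose $R$ is completely integrally closed and let $I, J \in \mathrm{Inv}(R)$ with $J \subseteq I^n$ for all $n > 0$; we must show $R \subseteq I$, i.e.\ $I^{-1} \subseteq R$. Pick $0 \ne b \in R$ with $bJ \subseteq R$ (possible since $J$ is a fractional ideal), so $bI^n \subseteq R$ for all $n$. If $x \in I^{-1} \setminus R$, then $xI \subseteq R$ forces $x^n I^n \subseteq R$, hence $x^n \in I^{-n}$; combined with $bI^n \subseteq R$ we would get $bx^n \in R$ for all $n$, contradicting complete integral closure. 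So $I^{-1} \subseteq R$ and $\mathrm{Inv}(R)$ is archimedean. (Alternatively, one invokes the stated criterion that a Pr\"ufer domain is completely integrally closed iff $\bigcap_n K^n = 0$ for every proper finitely generated $K$, reducing to $I$ integral by scaling.) For the converse, suppose $R$ is not completely integrally closed; by that same criterion there is a proper finitely generated — hence invertible — ideal $I$ with $\bigcap_{n>0} I^n \ne 0$. Choose $0 \ne c \in \bigcap_n I^n$; then $J := cR$ is invertible, $J \subseteq I^n$ for all $n$, yet $R \not\subseteq I$ since $I$ is proper. So $\mathrm{Inv}(R)$ is not archimedean, giving the contrapositive.

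For the second assertion, assume $R$ is completely integrally closed, so (2.2) tells us $\mathrm{Div}(R)$ is a group under the operation $\star$ and contains $\mathrm{Inv}(R)$ as a subgroup. I would first check that $\mathrm{Div}(R)$ is an $\ell$-group with the same order (reverse inclusion), with $I \wedge J = I + J$ and $I \vee J = I \cap J$ — one verifies $I \cap J$ and the divisorial closure of $I + J$ are again divisorial, and that these furnish genuine meet and join for the reverse-inclusion order on divisorial ideals; the group translations $K \mapsto K \star I$ preserve order because multiplying and then taking the divisorial closure is order-preserving and invertible on $\mathrm{Div}(R)$. Next I would show $\mathrm{Div}(R)$ is complete: given a family $\{I_\alpha\}$ of divisorial fractional ideals bounded below (i.e.\ all containing a fixed $I_0 \in \mathrm{Div}(R)$), the sum $\sum_\alpha I_\alpha$ is a fractional ideal, and its divisorial closure $(\sum_\alpha I_\alpha)_v$ is the greatest lower bound in $\mathrm{Div}(R)$ — it lies below every $I_\alpha$ and above any common lower bound, using that the $v$-operation is a closure operation compatible with inclusion. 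Hence $\mathrm{Div}(R)$ is a complete $\ell$-group containing $\mathrm{Inv}(R)$.

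Finally, to conclude that $\mathrm{Div}(R)$ is \emph{the} completion, by the characterization \cite[Theorem 2.4]{CA} quoted in the text it suffices to show that for each divisorial fractional ideal $H$ with $R \subsetneq H$ (the condition $0 < H$) there exist invertible $I_1, I_2$ with $I_1 \supseteq H \supseteq I_2$ (translating $0 < I_1 \le H \le I_2$ through the order reversal), equivalently that $\mathrm{Inv}(R)$ is dense in $\mathrm{Div}(R)$ and each divisorial ideal is sandwiched between two invertible ones. For $I_2$: since $H$ is a fractional ideal there is $0 \ne d \in R$ with $dH \subseteq R$, and then a principal — hence invertible — fractional ideal $I_2 = eR$ contained in $H$ (take any $0 \ne e \in H$). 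For $I_1$ and density: here is where the one place real work is needed. Given $H$ properly containing $R$, pick $x \in H \setminus R$; since $R$ is Pr\"ufer, the finitely generated ideal $R + xR$ (as a fractional ideal) is invertible, lies inside $H$, and strictly contains $R$ — this supplies a nonzero positive \emph{invertible} element below $H$ in the order, which is exactly density of $\mathrm{Inv}(R)$ in $\mathrm{Div}(R)$; and an invertible fractional ideal $I_1 \supseteq H$ can be obtained similarly by finding a finitely generated fractional sub-ideal of $H^{-1}$ properly containing $R$ and dualizing. The main obstacle, and the step deserving the most care, is verifying completeness of $\mathrm{Div}(R)$ together with this sandwiching/density property simultaneously — i.e.\ that the divisorial closure of an arbitrary (possibly infinite) sum behaves well as an infimum and that the Pr\"ufer hypothesis genuinely delivers invertible ideals squeezing an arbitrary divisorial ideal; everything else is a routine check once the $\ell$-group structure on $\mathrm{Div}(R)$ is in place.
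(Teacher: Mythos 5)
The forward implication of the first assertion, as you have written it, contains a step that fails. From $bJ \subseteq R$ and $J \subseteq I^n$ you conclude $bI^n \subseteq R$; but the containment $J \subseteq I^n$ points the wrong way for this (it gives $bJ \subseteq bI^n$, so $bJ \subseteq R$ says nothing about $bI^n$). Moreover, even granting $bI^n \subseteq R$, i.e.\ $b \in I^{-n}$, combining it with $x^n \in I^{-n}$ yields only $bx^n \in I^{-2n}$, not $bx^n \in R$. What your argument actually needs is an element $b$ lying \emph{in} every $I^n$, not one multiplying $I^n$ into $R$: choose $0 \ne b \in J \cap R$ (nonzero since $J$ is a nonzero fractional ideal); then $b \in I^n$ and $x^n \in I^{-n}$ give $bx^n \in I^nI^{-n} = R$ for all $n$, contradicting complete integral closure. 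With that one-line repair your element-wise argument is correct and is in fact more self-contained than the paper's, which reduces to the integral ideal $I \cap R$ via the Pr\"ufer identity $I^n \cap R = (I\cap R)^n$ and then applies the criterion $\bigcap_{n>0} K^n = 0$. Your parenthetical alternative (``reducing to $I$ integral by scaling'') does not work as stated, since replacing $I$ by $cI$ destroys the hypothesis that a single $J$ lies in every power. The converse direction is fine and agrees with the paper.

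The second half follows the paper's strategy (verify that $\Div(R)$ is complete and establish the sandwiching condition of \cite[Theorem 2.4]{CA}), but your translation of the order is systematically reversed: with $I \le J$ meaning $I \supseteq J$, the positive elements are the proper \emph{integral} divisorial ideals, so $0 < H$ means $H \subsetneq R$, not $R \subsetneq H$, and ``bounded below'' means the family is contained in a common fractional ideal, not that it contains one. This is not merely cosmetic in your completeness argument: if the $I_\alpha$ merely all contain a fixed $I_0$, then $\sum_\alpha I_\alpha$ need not be a fractional ideal at all (it can be the whole quotient field), so its divisorial closure is unavailable; under the correct reading the sum sits inside a common principal fractional ideal and your argument goes through (the paper dually takes the intersection of a family admitting a common nonzero divisorial subideal). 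Your density construction --- $R + xR$ for $x \in H \setminus R$, together with a principal ideal containing $H$ and one contained in $H$ --- is correct for $H \supsetneq R$, i.e.\ for the \emph{negative} elements; this does suffice, because density below the positive elements is equivalent to density above the negative ones via $H \mapsto H^{-1}$, but that equivalence needs to be stated. The paper sidesteps this by working directly with $H \subsetneq R$ and extracting the invertible ideal $R \cap q_\alpha R$ from a representation $H = \bigcap_\alpha q_\alpha R$.
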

\begin{proof}
Suppose that $R$ is completely integrally closed with $I,J \in \mbox{Inv}(R)$ such that $J \subseteq I^n$ for all $n >0$. Since $R$ is a Pr\"ufer domain, $I^n \cap R = (I \cap R)^n$ \cite[Theorem 2.2]{GG}, and hence $J \cap R \subseteq (I \cap R)^n$ for all $n>0$. Therefore, since $R$ is completely integrally closed and $J \cap R \ne 0$, it must be that $I \cap R = R$, and hence $R \subseteq I$.  Thus ${\rm Inv}(R)$ is an archimedean $\ell$-group.
Conversely, if $I$ is a proper finitely generated ideal of $R$ and ${\rm Inv}(R)$ is archimedean, then $\bigcap_{n>0} I^n = 0$, and hence $R$ is completely integrally closed.
%
%Conversely, let $x$ be a nonzero element of the quotient field of $R$ and $y$ an element of $R$. Suppose that $x^ny\in R$ for all $n\in\mathbb{N}$. Then $yR\geq (x^{-1}R)^n$ for all $n\in\mathbb{N}$, and so $x^{-1}R\leq R$, since $\mbox{Inv}(R)$ is archimedean. Thus $xR\geq R$, i.e., $x\in R$ and $R$ is completely integrally closed.
%\end{proof}

%\begin{prop}\label{completion of prufer}  If $R$ is a completely integrally closed Pr\"{u}fer domain, then the $\ell$-group
%  ${\rm Div}(R)$ is the completion of the $\ell$-group ${\rm Inv}(R)$ in the sense of Lemma~\ref{completion}.
%\end{prop}
%\begin{proof}

Now suppose $R$ is completely integrally closed.
Using \cite[Theorem 2.4]{CA} as discussed before the proposition,
 ${\rm Div}(R)$ is the completion of ${\rm Inv}(R)$ if and only if (a) ${\rm Div}(R)$ is complete, and (b)
 whenever $I$ is a proper divisorial ideal of $R$, there exist  invertible ideals $J$ and $K$  such that $K \subseteq I \subseteq J \subsetneq R$.
%We have established already that since $R$ is completely integrally closed, $\mbox{Inv}(R)$ is an archimedean $\ell$-group.
To see that ${\rm Div}(R)$ is complete, let $\{I_\alpha\}$ be a collection of ideals in ${\rm Div}(R)$ that is bounded below with respect to set inclusion. Then there exists a nonzero divisorial ideal $J$ contained in $I:=\bigcap_{\alpha}I_\alpha$. As an intersection of divisorial ideals, $I$ is divisorial, and since $J $ is nonzero, so is $I$. Therefore, $I \in {\rm Div}(R)$ and it follows that $I$ is the greatest lower bound of $\{I_\alpha\}$. This verifies (a).

Next, to verify (b),
let $I\in \mbox{Div}(R)$ with $I  \subsetneq R$, and write $I = \bigcap_{\alpha}q_\alpha R$, where the $q_\alpha$ are elements of the quotient field of $R$. Then there is $\alpha$ such that $I \subseteq R \cap q_{\alpha}R \subsetneq R$.  Since $R$ is a Pr\"ufer domain, the ideal $J:=R \cap q_\alpha R$ is invertible (cf.~\cite[Theorem 25.2(d)]{Gil}). Let $K$ be any nonzero principal ideal of $R$ contained in the nonzero ideal $I$. Then $K \subseteq I \subseteq J \subsetneq R$, which verifies (b).
%
%which proves that $\mbox{Inv}(R)$ is dense in $\mbox{Div}(R)$.
%
%  Finally, suppose that there is a complete $\ell$-group with $\mbox{Inv}(R)\subseteq H\subsetneq \mbox{Div}(R)$.
% Let $I \in   \mbox{Div}(R)\setminus H$, and write  $I = \bigcap_{\alpha} q_\alpha R$. Then for each $\alpha$, $q_\alpha R \in {\mbox{Inv}}(R)$, so that since $H$ is complete and hence closed under joins, $I=\bigcap_{\alpha} q_\alpha R \in H$, a contradiction. Therefore, $\mbox{Div}(R)$ is the completion of $\mbox{Inv}(R)$.
\end{proof}

 %If $R$ is Pr\"{u}fer, then the quotient group $\mbox{Div}(R)/\mbox{Inv}(R)$, denoted $Cl_w(R)$, is called the \textit{weak divisor class group} of $R$. Note that if $R$ is a B\'{e}zout domain, then $Cl(R)=Cl_w(R)$.

%\begin{cor}\label{completely-integrally} If $R$ and $S$ are completely integrally closed Pr\"{u}fer domains and there is an isomorphism of $\ell$-groups $\phi:{\rm{Inv}}(R) \rightarrow {\rm{Inv}}(S)$, then $\phi$ lifts to an isomorphism ${\rm Div}(R) \rightarrow {\rm Div}(S)$ of $\ell$-groups.
%\end{cor}
%\begin{proof} Suppose that $\mbox{Inv}(R)$ is ordered isomorphic to $\mbox{Inv}(S)$. By Proposition~\ref{completion of prufer}, $\mbox{Div}(R)$ is ordered isomorphic to $\mbox{Div}(S)$ as they are completions of the same group. Hence $Cl_w(R)$ is isomorphic to $Cl_w(S)$. {\bf Need that the isomorphism between ${\rm{Inv}}(R)$ and ${\rm{Inv}}(S)$ lifts to an isomorphism between  ${\rm{Div}}(R)$ and ${\rm{Div}}(S)$.}
%\end{proof}

\begin{defi}
We say that for an extension $R \subseteq S$ of Pr\"ufer domains, $R$  is {\it dense} in $S$ if the extension $R \subseteq S$ is faithfully flat and  for  each finitely generated proper  ideal $J$ of $S$ there is a finitely generated proper ideal $I$ of $R$ such that $J \subseteq IS$.
\end{defi}

\begin{lem} \label{m dense}  An extension $R \subseteq S$  of  Pr\"ufer domains is dense  if and only if  the $\ell$-homomorphism $\gamma:{\rm Inv}(R) \rightarrow {\rm Inv}(S):I \mapsto IS$ is a dense embedding of $\ell$-groups.
\end{lem}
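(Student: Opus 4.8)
The plan is to translate both sides into equivalent ring-theoretic conditions and match them. First I would fix the dictionary coming from (2.4): the order on $\Inv(R)$ is $I\le J\iff I\supseteq J$, its identity is $R$, so the strictly positive elements of $\Inv(R)$ are exactly the invertible ideals $I$ with $I\subsetneq R$, and likewise for $\Inv(S)$ with $S$ in place of $R$. Since $(I+J)S=IS+JS$ and $(IJ)S=(IS)(JS)$, the map $\gamma$ is automatically an $\ell$-homomorphism, so ``$\gamma$ is a dense embedding of $\ell$-groups'' says exactly: $\gamma$ is injective, and for every proper invertible ideal $h$ of $S$ there is $I\in\Inv(R)$ with $h\subseteq IS\subsetneq S$. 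I will use freely that invertible fractional ideals are finitely generated, that in a Pr\"ufer domain finite intersections of invertible ideals are invertible (recalled in (2.4)), and that over a Pr\"ufer domain every torsion-free module is flat; hence $S$, being a domain containing $R$, is automatically $R$-flat, so ``$R\subseteq S$ faithfully flat'' reduces to ``$MS\ne S$ for every maximal ideal $M$ of $R$,'' and once faithful flatness is in hand, $AS\cap R=A$ for every ideal $A$ of $R$.

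For the forward implication I would assume $R\subseteq S$ is dense. For injectivity of $\gamma$: if $IS=JS$ with $I,J\in\Inv(R)$, set $K=IJ^{-1}\in\Inv(R)$, so $KS=S$; choosing $0\ne d\in R$ with $dK\subseteq R$ and applying $AS\cap R=A$ to $A=dK$ and to $A=dR$ gives $dK=dR$, hence $K=R$ and $I=J$. For density of the image: given a proper invertible ideal $h$ of $S$ (so $h$ is finitely generated), the density hypothesis yields a finitely generated proper ideal $I$ of $R$ with $h\subseteq IS$; since $h\ne 0$ we get $I\ne 0$, hence $I$ is invertible, and $I\subsetneq R$ forces $IS\subsetneq S$ by faithful flatness, so $0<\gamma(I)\le h$ in $\Inv(S)$.

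For the converse I would assume $\gamma$ is a dense embedding. To get faithful flatness, flatness is automatic as above; and if $MS=S$ for some maximal ideal $M$ of $R$, then $1=\sum_i m_i s_i$ with $m_i\in M$ and $s_i\in S$, so $I:=(m_1,\dots,m_n)R$ is a finitely generated nonzero proper ideal of $R$, hence invertible, with $\gamma(I)=IS=S=RS=\gamma(R)$, contradicting injectivity; so $MS\ne S$ for all maximal $M$. For the finitely-generated-ideal condition, let $J$ be a finitely generated proper ideal of $S$; the case $J=0$ is trivial (take $I=0$), so assume $J\ne 0$, whence $J$ is invertible and $J>0$. Density of the image gives $I\in\Inv(R)$ with $J\subseteq IS\subsetneq S$; here $I$ need not be integral, but $I_0:=I\cap R=I\vee R$ lies in $\Inv(R)$, is integral and (being invertible) finitely generated, and since $\gamma$ is an $\ell$-homomorphism, $\gamma(I_0)=\gamma(I)\vee\gamma(R)=IS\vee S=IS\cap S=IS$, using $IS\subseteq S$. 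Thus $J\subseteq I_0 S$ and $I_0 S=IS\ne S$ shows $I_0\subsetneq R$, so $I_0$ is the required finitely generated proper ideal of $R$.

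I expect the only genuinely delicate step to be that last substitution $I\mapsto I\cap R$: $\ell$-group density only supplies an invertible ideal $I$ with $IS$ proper and integral, not $I$ itself, and the identity $\gamma(I\cap R)=\gamma(I)\vee 0$ is exactly what moves the witness into the integral ideals of $R$ without losing the inclusion $J\subseteq IS$. It is also worth noting the symmetry that makes the equivalence tick: the two ``extra'' requirements — faithful flatness on the ring side, injectivity on the $\ell$-group side — both collapse to the single statement that no finitely generated proper ideal $I$ of $R$ satisfies $IS=S$.
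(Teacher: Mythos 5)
Your proof is correct and follows essentially the same route as the paper's: both directions are handled by unwinding the two definitions, using that torsion-free modules over a Pr\"ufer domain are flat (so faithful flatness reduces to survival of proper ideals), deducing injectivity of $\gamma$ from $AS\cap R=A$, and matching the two density conditions directly. Your extra step replacing $I$ by $I\cap R=I\vee R$ in the converse is a small refinement the paper elides (its density hypothesis only supplies $I$ with $IS$ proper and integral, not $I$ itself), but it does not change the substance of the argument.
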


\begin{proof}
Suppose that $R$ is dense in $S$.  To see that $\gamma$ is injective, let $I$ and $J$ be invertible fractional ideals of $R$ such that $IS = JS$.  Then $IJ^{-1}S = S$. Since $I$ and $J^{-1}$ are fractional ideals of $R$, there exists $0 \ne r \in R$ such that $rIJ^{-1}$ is an ideal of $R$.
Since $S$ is  flat, $S$ commutes with finite intersections of ideals of $R$ \cite[Theorem 7.4(ii), p.~48]{Mat}, so that
 $$(rR \cap rIJ^{-1})S = rS \cap rIJ^{-1}S = rS.$$  Thus, since $r$ is a nonzerodivisor in $S$, $(R \cap IJ^{-1}) S = S$. Since $S$ is faithfully flat,  every proper ideal of $R$ survives in $S$ \cite[Theorem~7.2(iii), p.~47]{Mat}, which implies that  $R \cap IJ^{-1} = R$.  Thus $R \subseteq IJ^{-1}$ and, since $J$ is invertible,  $J \subseteq I$.  Similarly, from $IS = JS$ we deduce that $S =I^{-1}J S$ and the same argument with the roles of $I$ and $J$ reversed shows that $I \subseteq J$.  Thus $I = J$ and
 $\gamma:{\rm Inv}(R) \rightarrow {\rm Inv}(S)$ is injective.

 To see that $\gamma$ is a dense embedding of $\ell$-groups, observe that if  $J$ is a proper invertible ideal of $S$, then by assumption there exists a proper finitely generated ideal $I$ of $R$ with $J \subseteq IS \subsetneq S$. Thus  $\gamma$ is a dense embedding. Moreover, $\gamma$ is an $\ell$-group homomorphism since for all invertible fractional ideals $I$ and $J$ of $R$, $(I+J)S = IS + JS$ and (by the flatness of $S$) $(I \cap J)S = IS \cap JS$   \cite[Theorem 7.4, p.~48]{Mat}.

 Conversely, suppose that $\gamma$ is a dense embedding.
Since $\gamma$ is injective, every proper invertible ideal $I$ of $R$ survives in $S$. If $I$ is a not necessarily invertible proper ideal of $S$ such that $IS = S$, then there is a finitely generated, hence invertible, proper ideal $J$ of $R$ with $J \subseteq I$ and $JS = S$, a contradiction. Thus every proper ideal of $R$ survives in $S$. Since every torsion-free module over a Pr\"ufer domain is flat \cite[Theorem 9.10, p.~233]{FS}, it follows that $S$ is a faithfully flat extension of $R$ \cite[Theorem 7.2, p.~47]{Mat}. Moreover,
 if $J$ is a proper finitely generated ideal of $S$, then, since $\gamma$ is a dense embedding, there exists a proper finitely generated  ideal $I$ of $R$   such that $J \subseteq \gamma(I) = IS \subsetneq S$.
\end{proof}

%In the next remark we note several alternative ways to characterize the conditions in Lemma~\ref{m dense}.

\begin{rem} \label{ff} {\em
Let  $R \subseteq S$ be an extension  of domains with $R$ a Pr\"ufer domain.
 The extension $R \subseteq S$ is faithfully flat if and only if $R = S \cap Q(R)$, where $Q(R)$ is the quotient field of $R$.
Indeed,  every torsion-free module over a Pr\"ufer domain  is flat \cite[Theorem 9.10, p.~233]{FS},
so it suffices to observe that $R = S \cap Q(R)$ if and only if  every maximal ideal of $R$ survives in $S$. If $R = S \cap Q(R)$ and $M$ is a maximal ideal of $R$, then since $M$ is a flat $R$-module,   $M = MS \cap MQ(R) = MS \cap Q(R)$ \cite[Theorem 7.4(i), p.~48]{Mat},
 and hence $S \not = MS$.    Conversely, if a maximal ideal of $R$ survives in $S$, then it
survives in $S \cap Q(R)$, so $S \cap Q(R)$ is
a faithfully flat ring between $R$ and $Q(R)$, which forces
 $R = S \cap Q(R)$ \cite[Exercise 7.2, p.~53]{Mat}}.
%(2)
 % It straightforward to verify that $R$ is dense in $S$
%if and only if  $S$ is a faithful $R$-module for which
% every proper finitely generated ideal of $S$ is contained in an ideal of the form $MS$, where $M$ is a maximal ideal of $R$.
%The claim of ``only if'' is clear. To see the converse,
% suppose that every proper finitely generated ideal of $S$ is contained in an ideal of the form $MS$, where $M$ is a maximal ideal of $R$.   Let $J$ be a proper finitely generated ideal of $S$, say
% $J = (x_1,\ldots, x_n)S$ for $x_1,\ldots,x_n \in J$.
 %  Then by assumption $J \subseteq MS$ for some maximal ideal $M$ of $S$, so  for each $k =1,\ldots,n$,  since $x_k \in MS$, there exists a finitely generated ideal $I_k$ of $R$ such that $x_k \in I_k \subseteq M$.  Thus $J \subseteq (I_1+\cdots+I_k)S \subseteq MS \subsetneq S$, which proves the claim. %  Since $S$ is a Pr\"ufer domain, $I:=I_1 + \cdots + I_n$ is an invertible ideal of $S$.  Thus $J \subseteq IS \subsetneq S$, and this proves that $R$ is $m$-dense in $S$.
%}
\end{rem}

%{\bf Discuss Nagata function ring of a Pr\"ufer domain.}
%Let $R$ be an integrally closed domain with quotient field $L$. The \textit{Kronecker function ring} of $R$ denoted $R^b$ is the ring
%$$R^b=\{\frac{f}{g}:f, g\in R[X], g\neq 0, c(f)^b\subseteq c(g)^b\},$$
%where $c(h)$ denotes the fractional ideal generated by coefficient of $h$ and $c(h)^b$ is the integral closure of $c(h)$ in $K$.
%It is well known that $R^b$ is a B\'{e}zout domain with quotient field $L(X)$ and $R^b\cap L=R$.

Let $R$ be a domain, and let $T$ be an indeterminate for $R$. The {\it Nagata function ring} $R(T)$ of  $R$ is given by
 $$R(T):=\left\{\frac{f}{g}:f, g\in R[T] \mbox{ and } c(g)=R\right\},$$  where $c(g)$ denotes the content ideal of $g$ in $R$.   The ring    $R(T)$ is a B\'{e}zout domain if and only if $R$ is a Pr\"{u}fer domain \cite[Theorem 33.4]{Gil}.  In Theorem~\ref{dense subring}  we apply   the Nagata function ring construction  to embed a Pr\"ufer domain inside a B\'ezout domain with the same group of invertible fractional ideals. This is done via the following lemma, which can be deduced from more general results (see for example  \cite[Proposition 3.1.23, p.~144]{Moc} and \cite[p.~589]{Ohm}). For lack of an explicit reference, we give a proof here.

% We show in the next lemma that when $R$ is a Pr\"ufer domain, the embedding $R \rightarrow R(T)$ is not only dense, but preserves the group of invertible fractional ideals of $R$.

\begin{lem} \label{invertible} If $R$ is a Pr\"ufer domain, then
the mapping
$$\phi : {\rm Inv}(R)\rightarrow {\rm Prin}(R(T)):I\mapsto IR(T)$$
is an isomorphism of $\ell$-groups.

\end{lem}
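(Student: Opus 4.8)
The plan is to show that $\phi$ is a well-defined group homomorphism, that its image is exactly $\mathrm{Prin}(R(T))$, that it is injective, and finally that it preserves the lattice operations. First I would check well-definedness and the homomorphism property: for an invertible ideal $I=(a_1,\dots,a_n)R$ of $R$, the ideal $IR(T)$ is generated by the single element $a_1+a_2T+\dots+a_nT^n$ of $R[T]\subseteq R(T)$, since the content of this polynomial is $I$ and contents of unit-content polynomials become units in $R(T)$; this uses the standard fact (Dedekind--Mertens / the description of $R(T)$) that $c(fg)=c(f)c(g)$ up to radical, or more directly that $fR(T)=c(f)R(T)$ for any $f\in R[T]$. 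Hence $\phi(I)$ is principal, and $\phi(IJ)=IJR(T)=(IR(T))(JR(T))=\phi(I)\phi(J)$, so $\phi$ is a group homomorphism $\mathrm{Inv}(R)\to\mathrm{Prin}(R(T))$. Surjectivity onto $\mathrm{Prin}(R(T))$ follows because a typical nonzero principal fractional ideal of $R(T)$ is generated by $f/g$ with $f,g\in R[T]$ and $c(g)=R$, so it equals $fR(T)=c(f)R(T)=\phi(c(f))$ (after clearing denominators to reduce to the integral case and then passing to fractional ideals).

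For injectivity, suppose $IR(T)=JR(T)$ for invertible fractional ideals $I,J$ of $R$. Then $IJ^{-1}R(T)=R(T)$, and I want to conclude $IJ^{-1}=R$, i.e. that proper (invertible) ideals of $R$ survive in $R(T)$. This is the key point, and it is where I expect the main work to lie: one shows $R(T)$ is a faithfully flat extension of $R$. A clean way is to observe that $R(T)$ is a localization of the polynomial ring $R[T]$ (at the multiplicative set of unit-content polynomials), hence flat over $R[T]$ and so flat over $R$; faithfulness then follows because for any maximal ideal $M$ of $R$, the polynomial $X$ together with $M$ — or better, $M[T]$ localized — does not generate the unit ideal, since $c$ of any element of $MR(T)$ lands in $M$. (Alternatively, $R(T)=R(T)\cap Q(R)$-type reasoning as in Remark~\ref{ff} could be invoked if one first knows $R(T)$ is Pr\"ufer, which it is by \cite[Theorem 33.4]{Gil}, but the localization argument is the most self-contained.) Granting faithful flatness, $IJ^{-1}R(T)=R(T)$ forces $IJ^{-1}\supseteq R$ and, by the symmetric argument with $I$ and $J$ interchanged, $IJ^{-1}\subseteq R$; hence $I=J$.

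Finally, to see $\phi$ is an $\ell$-isomorphism it remains to check it preserves meets and joins, i.e. $\phi(I+J)=\phi(I)+\phi(J)$ and $\phi(I\cap J)=\phi(I)\cap\phi(J)$ in the respective $\ell$-group orders (where larger ideal = smaller element). The sum case is immediate from $(I+J)R(T)=IR(T)+JR(T)$. The intersection case is exactly the flatness statement $(I\cap J)R(T)=IR(T)\cap JR(T)$, valid since $R(T)$ is flat over $R$ \cite[Theorem 7.4, p.~48]{Mat}. Since $\phi$ is an order-preserving group isomorphism whose inverse is also order-preserving (both $I\subseteq J\Rightarrow IR(T)\subseteq JR(T)$ and its converse follow from faithful flatness), it is an isomorphism of $\ell$-groups. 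The single genuine obstacle is establishing that $R(T)$ is faithfully flat over $R$ (equivalently, that proper ideals survive); once that is in hand, every other step is a short formal computation.
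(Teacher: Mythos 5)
Your proposal is correct and follows essentially the same route as the paper: faithful flatness of $R\subseteq R(T)$ yields injectivity and preservation of meets and joins, and the content formula $fR(T)=c(f)R(T)$ (valid since $R$ is Pr\"ufer) yields surjectivity. The only cosmetic differences are that the paper obtains well-definedness immediately from the fact that $R(T)$ is a B\'ezout domain rather than exhibiting an explicit principal generator, and it cites the literature for faithful flatness where you sketch the localization-of-$R[T]$ argument.
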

\begin{proof}  Since $R(T)$ is a B\'ezout domain, $\phi$ is well-defined.
 The extension $R \subseteq R(T)$ is   faithfully flat \cite[p.~39]{FO}, so $\phi$ is injective and preserves meets and joins \cite[Theorem 9.9 and Lemma 9.11]{FS}. Thus $\phi$ is also an $\ell$-group homomorphism.
 To show that $\phi$ is onto, let $S = R(T)$, and  let $J\in\mbox{Prin}(S)$. Then $J=fg^{-1}S$ for some $f,g \in R[T]$ with $c(g) = R$. Now $fS=c(f)S$
 %and $gS=c(g)S = S$
 \cite[Theorem 32.7]{Gil} and $g$ is a unit in $S$, so
 %Since $c(g)$ is a nonzero finitely generated ideal of the Pr\"ufer domain $R$, there is a fractional ideal $A$ of $R$ such that $c(g)A=R$.
   $c(f)S=fg^{-1}S=J$. Thus $\phi$ is onto.
%
% It remains to show that $\phi$ is an $\ell$-group homomorphism. It is clear that $\phi$ is a group homomorphism, and it is also obvious that $\phi$ is order-preserving: if $I\leq J$, then $I\supseteq J$, so that $IS \supseteq JS$, and hence $\phi(I) \leq \phi(J)$. In addition, for each $I, J\in\mbox{Inv}(R)$, $\phi(I\wedge J) =\phi(I+J)=(I+J)S=IS+JS=\phi(I)\wedge\phi(J)$. Now note that if $K\in \mbox{Prin}(S)$, $K=IS$ for some $I\in\mbox{Inv}(R)$ since $\phi$ is onto. So $K\cap R=IS\cap R=I$ for some $I\in\mbox{Inv}(R)$. Thus $\phi(I\vee J) =IS\cap JS=AS$ for some $A\in\mbox{Inv}(R)$. It then follows that $A=AS\cap R=(IS\cap R)\cap (JS\cap R)=I\cap J$. So $\phi(I\vee J) =AS=(I\cap J)S=\phi(I\vee J)$. Hence $\phi$ is an isomorphism of  $\ell$-groups.
 \end{proof}

The next lemma is an application of an existence theorem due to
Rump and Yang \cite[Corollary 3]{RY} that shows that if $R$ is a B\'ezout domain and  $f:{\rm Inv}(R) \rightarrow H$ is an embedding of $\ell$-groups, then there exists an extension $S$ of $R$ such that $S$ is a B\'ezout domain with  ${\rm Prin}(S) \cong H$ as $\ell$-groups. The proof of the lemma uses the notion of a B\'ezout valuation from \cite{RY}. Let $K$ be a field, and let $G$ be an $\ell$-group. We adjoin an element $\infty$ to $G$ such that  $\infty > g$ and $\infty + g = \infty$ for all $g \in G$.
 A map $\nu:K \rightarrow G \cup \{\infty\}$ is a {\it B\'ezout valuation} if for all $x,y \in K$,
\begin{itemize}
\item[(a)] $\nu(0) = \infty$, $\nu(1) = 0$,
\item[(b)] $\nu(xy) = \nu(x) + \nu(y)$,
\item[(c)]  $\nu(x) \wedge \nu(y) \leq \nu(x+y)$, and
\item[(d)] $\nu(x) \wedge \nu(y)  = \nu(xz+yw)$ for some  $z,w \in K$ with $\nu(z),\nu(w)\geq 0$.
 \end{itemize}

\begin{lem} \label{new commutes} Let $R$ be a Pr\"ufer domain, and let $ f:{\rm Inv}(R) \rightarrow H$ be an embedding of $\ell$-groups. Then there is a ring extension $S$ of $R$ such that $S$ is a
 B\'ezout domain and there is an  isomorphism of $\ell$-groups $\psi:{\rm Inv}(S) \rightarrow H$ for which
 % $\psi^{-1} \circ f:
%  {\rm Inv}(R) \rightarrow {\rm Inv}(S)$  is  the   $\ell$-group homomorphism given by
$\psi^{-1}(f(I)) = IS$ for all $I \in {\rm Inv}(R)$.
 \end{lem}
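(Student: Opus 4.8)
The plan is to reduce to the B\'ezout case, apply the realization theorem of Rump and Yang \cite[Corollary 3]{RY}, and transport the conclusion back along the Nagata ring construction. First I would pass to $B := R(T)$, the Nagata function ring of $R$. Since $R$ is a Pr\"ufer domain, $B$ is a B\'ezout domain \cite[Theorem 33.4]{Gil}, and Lemma~\ref{invertible} gives an $\ell$-group isomorphism $\phi : {\rm Inv}(R) \to {\rm Prin}(B) = {\rm Inv}(B)$, $I \mapsto IB$ (using ${\rm Inv}(B) = {\rm Prin}(B)$ from (2.4), as $B$ is B\'ezout). Thus $g := f \circ \phi^{-1} : {\rm Inv}(B) \to H$ is an embedding of $\ell$-groups whose source is the invertible-ideal group of a B\'ezout domain.

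Next I would invoke \cite[Corollary 3]{RY} for the B\'ezout domain $B$ and the embedding $g$. It produces a ring extension $S$ of $B$ — hence of $R$ — with $S$ a B\'ezout domain, together with an $\ell$-group isomorphism $\psi : {\rm Inv}(S) = {\rm Prin}(S) \to H$. What I actually need is the compatible version $\psi(JS) = g(J)$ for all $J \in {\rm Inv}(B)$, and for this I would use the B\'ezout-valuation description underlying \cite{RY}: the canonical B\'ezout valuation $\nu_B : Q(B) \to {\rm Inv}(B) \cup \{\infty\}$, $x \mapsto xB$ (with $\nu_B(0) = \infty$), recovers $B$ as $\{x : \nu_B(x) \geq 0\}$, and the construction realizes $S$ as the B\'ezout domain $\{x \in L : \mu(x) \geq 0\}$ of a B\'ezout valuation $\mu : L \to H \cup \{\infty\}$ on a field extension $L$ of $Q(B)$ that restricts to $g \circ \nu_B$ on $Q(B)$, with $\psi$ the isomorphism $xS \mapsto \mu(x)$. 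Then $\psi((xB)S) = \psi(xS) = \mu(x) = g(xB)$ for every $x \in Q(B)^{\times}$, and since $B$ is B\'ezout every $J \in {\rm Inv}(B)$ equals $xB$ for some such $x$, so $\psi(JS) = g(J)$ throughout.

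Finally I would combine the two steps. For $I \in {\rm Inv}(R)$, the chain $R \subseteq B \subseteq S$ yields $IS = (IB)S = \phi(I)S$; note that $IS$ is a finitely generated, hence invertible, fractional ideal of the Pr\"ufer domain $S$, so $IS \in {\rm Inv}(S)$. Therefore
$$\psi(IS) = \psi(\phi(I)S) = g(\phi(I)) = f\bigl(\phi^{-1}(\phi(I))\bigr) = f(I),$$
i.e.\ $\psi^{-1}(f(I)) = IS$, which is the assertion.

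The step I expect to be the main obstacle is the compatibility in the middle paragraph: \cite[Corollary 3]{RY} read literally only delivers an abstract isomorphism ${\rm Prin}(S) \cong H$, whereas the lemma requires this isomorphism to cohere with the prescribed embedding $g$ on ${\rm Inv}(B)$. That coherence is not formal; it must be extracted from the construction, where the extended valuation on $S$ has value group exactly $H$ and restricts to $g \circ \nu_B$. The remaining points — that $S$ is an extension of $R$, that $IS = \phi(I)S$, and that $\psi$ is an $\ell$-isomorphism — are routine.
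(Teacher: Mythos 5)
Your proposal is correct and follows essentially the same route as the paper: pass to the Nagata ring $R(T)$ via Lemma~\ref{invertible}, apply \cite[Corollary 3]{RY} in its commutative-diagram form to extend the Bézout valuation with value group $H$, and define $\psi(xS)=\nu(x)$ to get the required compatibility. The coherence issue you flag is handled in the paper exactly as you anticipate, by using the commuting square from \cite{RY} rather than a bare abstract isomorphism.
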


 \begin{proof} Let $A = R(T)$, so that $A$ is a B\'ezout domain, and let $\phi:{\rm Inv}(R) \rightarrow {\rm Inv}(A):I \rightarrow IA$ be the $\ell$-group isomorphism given in Lemma~\ref{invertible}.
 Let $K$ be the quotient field of $A$. Then there is a surjective B\'ezout valuation $\nu_A:K \rightarrow {\rm Inv}(A) \cup \{\infty\}$ such that $A = \{x \in K:\nu_A(x) \geq 0\}$ \cite[Proposition 1]{RY}.
 Denote by $g$ the map from $\Inv(A) \cup \{\infty\}$ to $H \cup \{\infty\}$ such that $g|_{\Inv(A)} = f \circ \phi^{-1}$ and $g(\infty) = \infty$.
 By \cite[Corollary 3]{RY} there is an extension field $L$ of $K$ and a surjective B\'ezout valuation $\nu:L \rightarrow H \cup \{\infty\}$ such that the following diagram commutes.
 \begin{center}
$\begin{CD}
K  @>{\nu_A}>> \Inv(A) \cup \{\infty\} \\
@VV{\subseteq}V        @VV{g}V\\
L    @>{\nu}>>  H \cup \{\infty\}.
\end{CD}$
 \end{center}
Let $S = \{x \in L:\nu(x) \geq 0\}$. Then $S$ is a B\'ezout domain with quotient field $L$ \cite[Proposition 1]{RY}.  Define $\psi:{\rm Inv}(S) \rightarrow H$ by $\psi(xS) = \nu(x)$ for each $0\ne x \in L$.   That $\psi$ is well defined and injective follows from the fact that for $x,y \in L$, $xS = yS$ if and only if  $\nu(x) = \nu(y)$. Since $\nu$ is surjective, so is $\psi$. Also, from the fact that $\nu(xy) = \nu(x)+\nu(y)$ for all $0 \ne x,y \in L$, it follows that $\psi$ is a group homomorphism.

To see that $\psi$ is an $\ell$-group homomorphism, let $0 \ne x,y \in L$.  %Since $S$ is a B\'ezout domain, there exists $z \in L$ such that $xS + yS = zS$.  Also,
Since $\nu$ is a B\'ezout valuation, there exists $z \in xS + yS$ such that $\nu(x) \wedge \nu(y) = \nu(z)$, and hence $xS + yS = zS$ (see the proof of  \cite[Proposition 1]{RY}). Therefore,  $$\psi(xS \wedge yS) = \psi(xS +yS) = \psi(zS) = \nu(z) = \nu(x) \wedge \nu(y)= \psi(xS) \wedge \psi(yS),$$
proving that $\psi$ is an $\ell$-group isomorphism.

Finally, let $I \in {\rm Inv}(R)$. Since $A$ is a B\'ezout domain, there exists  $x \in K$ such that $IA = xA$.    Since $\phi$ is an isomorphism, $I = \phi^{-1}(xA)$, and hence  $$\psi^{-1}(f(I)) = \psi^{-1}(f(\phi^{-1}(xA))) = \psi^{-1}(\nu(x)) = xS = IS.$$ Therefore, $\psi^{-1} \circ f$ is the canonical map from ${\rm Inv}(R)$ to ${\rm Inv}(S)$.
 \end{proof}

%\begin{lem} \label{RY cor} If $R$ is a Pr\"ufer domain, $G$ is an $\ell$-group and $f:{\rm Inv}(R) \rightarrow G$ is an embedding of $\ell$-groups, then there exists an extension $S$ of $R$ such that $S$ is a B\'ezout domain with ${\rm Inv}(S) = G$.
%\end{lem}

%\begin{proof}    Since $R$ is a Pr\"ufer domain,  Lemma \ref{invertible} implies that ${\rm Inv}(R)\cong{\rm Prin}(R(T))$ as $\ell$-groups. By the result of Rump and Yang cited above, there exists an extension $S$ of $R(T)$  such that $S$ is a B\'ezout domain and  ${\rm Prin}(S) \cong G$. Since $S$ is a B\'ezout domain, ${\rm Prin}(S) = {\rm Inv}(S)$, so the lemma is proved.
%\end{proof}

We apply the lemma in the case in which $R$ is completely integrally closed and $H = \Div(R)$. We also make use of the fact that
%Every pseudo-Dedekind domain is completely integrally closed \cite[p.~327]{AK}.
 a Pr\"ufer domain $R$ is pseudo-Dedekind if and only if ${\rm Inv}(R)$ is a complete $\ell$-group \cite[p.~327]{AK}.

\begin{thm}\label{dense subring} A Pr\"ufer domain $R$ is  completely integrally closed if and only if $R$  is a dense subring of
 a pseudo-Dedekind B\'ezout domain $S$. Moreover, $S$ can be chosen such that
  ${\rm Inv}(S) \cong {\rm Div}(R)$ as $\ell$-groups.

\end{thm}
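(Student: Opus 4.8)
The plan is to prove both implications by assembling the results already established; essentially all of the real work has been done in Proposition~\ref{completely integrally prufer}, Lemma~\ref{new commutes}, and Lemma~\ref{m dense}, so the proof is largely a matter of packaging.

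For the forward implication, assume $R$ is a completely integrally closed Pr\"ufer domain. By Proposition~\ref{completely integrally prufer}, $\Div(R)$ is the completion of the $\ell$-group $\Inv(R)$; in particular the inclusion $f\colon\Inv(R)\hookrightarrow\Div(R)$ is a dense embedding of $\ell$-groups. Applying Lemma~\ref{new commutes} to this $f$ with $H=\Div(R)$ produces a ring extension $S$ of $R$ such that $S$ is a B\'ezout domain, together with an $\ell$-group isomorphism $\psi\colon\Inv(S)\to\Div(R)$ satisfying $\psi^{-1}(f(I))=IS$ for all $I\in\Inv(R)$. This isomorphism already yields the ``moreover'' clause $\Inv(S)\cong\Div(R)$.

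It remains to check that $S$ is pseudo-Dedekind and that $R$ is dense in $S$. Since $\Div(R)$ is a completion, it is a complete $\ell$-group, so $\Inv(S)\cong\Div(R)$ is complete; as $S$ is B\'ezout, hence Pr\"ufer, the characterization that a Pr\"ufer domain is pseudo-Dedekind exactly when its group of invertible fractional ideals is complete shows $S$ is pseudo-Dedekind. For density, let $\gamma\colon\Inv(R)\to\Inv(S)$, $I\mapsto IS$, be the canonical map of Lemma~\ref{m dense}. The compatibility $\psi^{-1}(f(I))=IS$ says precisely that $\psi\circ\gamma=f$, so $\gamma=\psi^{-1}\circ f$; since $f$ is a dense embedding of $\ell$-groups and $\psi^{-1}$ is an order-isomorphism, $\gamma$ is again a dense embedding of $\ell$-groups. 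By Lemma~\ref{m dense}, $R$ is dense in $S$, which completes this direction.

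For the converse, assume $R$ is a Pr\"ufer domain that is dense in a pseudo-Dedekind B\'ezout domain $S$. Then $\Inv(S)$ is a complete $\ell$-group, hence archimedean \cite[Lemma 5, p.~291]{Bir}. By Lemma~\ref{m dense}, $\gamma\colon\Inv(R)\to\Inv(S)$ is a (dense) embedding of $\ell$-groups, so $\Inv(R)$ is realized as an $\ell$-subgroup of the archimedean $\ell$-group $\Inv(S)$; every $\ell$-subgroup of an archimedean $\ell$-group is clearly archimedean, so $\Inv(R)$ is archimedean. By Proposition~\ref{completely integrally prufer}, $R$ is completely integrally closed.

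There is no serious obstacle remaining at this stage, since the statement is a repackaging of the preceding lemmas. The one point that genuinely requires care is ensuring that the canonical ideal-extension map $\gamma$ coincides, under the isomorphism $\psi$, with the dense embedding $f\colon\Inv(R)\hookrightarrow\Div(R)$, so that density of $f$ transfers to density of the extension $R\subseteq S$; this is exactly the compatibility clause $\psi^{-1}(f(I))=IS$ that was built into Lemma~\ref{new commutes}, and had that clause not been available, arranging this compatibility would have been the crux of the argument.
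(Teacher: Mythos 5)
Your proof is correct, and the forward direction is essentially identical to the paper's: invoke Proposition~\ref{completely integrally prufer} to get the dense embedding $f\colon\Inv(R)\hookrightarrow\Div(R)$, feed it into Lemma~\ref{new commutes} with $H=\Div(R)$, read off pseudo-Dedekindness from completeness of $\Inv(S)\cong\Div(R)$, and transfer density of $f$ to density of $\gamma=\psi^{-1}\circ f$ via the compatibility clause and Lemma~\ref{m dense}. The only place you diverge is the converse: the paper argues ring-theoretically, observing that a proper finitely generated ideal $I$ of $R$ survives in the completely integrally closed B\'ezout domain $S$, so $\bigcap_{n>0}I^n\subseteq\bigcap_{n>0}I^nS=0$, and then uses the criterion $\bigcap_{n>0}I^n=0$ quoted at the start of Section~3. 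You instead stay on the $\ell$-group side: $\Inv(S)$ is complete, hence archimedean, and $\gamma$ embeds $\Inv(R)$ as an $\ell$-subgroup, so $\Inv(R)$ is archimedean and Proposition~\ref{completely integrally prufer} applies. Both routes are sound (an injective $\ell$-homomorphism is indeed an order-embedding, so the archimedean property does pass to $\Inv(R)$); the paper's is marginally more self-contained since it avoids re-invoking Lemma~\ref{m dense}, while yours keeps the whole argument uniformly at the level of the group invariants, which fits the theme of the section.
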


\begin{proof}  Suppose that $R$ is  completely integrally closed. By Proposition~\ref{completely integrally prufer},
the inclusion mapping
 $f:{\rm Inv}(R) \rightarrow {\rm Div}(R)$ is a dense embedding of $\ell$-groups.    Since $R$ is a Pr\"ufer domain,  Lemma~\ref{new commutes} implies there exists a B\'ezout domain $S$   having $R$ as a subring  such that there is an  isomorphism of $\ell$-groups $\psi:{\rm Inv}(S) \rightarrow \Div(R)$ with $\gamma:=\psi^{-1} \circ f$  the canonical  $\ell$-group homomorphism that sends
$ I \in  {\rm Inv}(R)$ to $IS \in {\rm Inv}(S)$.
Since, by Proposition~\ref{completely integrally prufer},  ${\rm Div}(R)$ is complete, so is ${\rm Inv}(S)$.
 As a complete $\ell$-group,  ${\rm Inv}(S)$ is archimedean. Thus,
 by Proposition~\ref{completely integrally prufer},  $S$ is  a completely integrally closed B\'ezout domain with   $\Inv(S) = \Div(S)$. Therefore,  $S$ is a pseudo-Dedekind domain.

To show that $R$ is dense in $S$, it suffices by
 Proposition \ref{completion of prufer} to show that
$\gamma$ is a dense embedding. Since $\gamma$ is a composition of injective maps, $\gamma $ is injective. To see that $\gamma$ is dense, let $I \in {\rm Inv}(S)$ such that $I \subsetneq S$.  Then, since $\psi$ is an isomorphism of $\ell$-groups, $\psi(I) \subsetneq R$.  Since the inclusion mapping $f$ is a dense embedding, there exists $J \in {\rm Inv}(R)$ such that $\psi(I) \subseteq f(J) \subsetneq R$.  Therefore, $I \subseteq \psi^{-1}(f(J)) \subsetneq S$, and since $\gamma(J) = \psi^{-1}(f(J))$, this proves $\gamma$ is a dense embedding.

Conversely, if $R$ is dense in a pseudo-Dedekind B\'ezout domain $S$ and $I$ is a proper finitely generated ideal of $R$, then since $I$ survives in the completely integrally closed B\'ezout domain $S$, we have $\bigcap_{n>0}I^n \subseteq \bigcap_{n>0}I^nS = 0$. Hence $R$ is  completely integrally closed.
\end{proof}

In Section 5 we apply Theorem~\ref{dense subring} to SP-domains with nonzero Jacobson radical. In that case the groups $\Inv(R)$ and $\Div(R)$ are topological invariants of the maximal spectrum of $R$. Thus
 the embedding in Theorem~\ref{dense subring} ultimately depends  on $\Max(R)$.

\section{SP-domains}

Following Vaughan and Yeagy \cite{VY}, we say that a ring $R$ for which every proper
ideal is a product of radical ideals (i.e., ``semi-prime'' ideals) is
an {\it SP-ring}.  These rings have been studied by several authors; see for example \cite{BY, FHL, Has, WM, Olb, RE, VY, Y}.
 An SP-domain is  necessarily an
  {\it almost Dedekind domain}; that is, $R_M$ is a
Dedekind domain for each maximal ideal $M$ of $R$ \cite[Theorem 2.4]{VY}. In particular, an SP-domain is a one-dimensional Pr\"ufer domain.  Of course there exist SP-domains that are not B\'ezout domains, since Dedekind domains are SP-domains. However, there are more subtle examples also.

It is possible to construct a variety of SP-domains by using integral extensions of Dedekind domains. More precisely, one can obtain examples of SP-domains that are neither Dedekind domains nor B\'ezout domains by using the following method (which was already known to Grams \cite{GR}).

\begin{ex} {\em Let $R$ be a Dedekind domain with quotient field $K$ such that ${\rm Max}(R)$ is countable. Let $\{K_i\}_{i\geq 0}$ be an ascending chain of extension fields of $K$ such that $K_0=K$, and $[K_i:K]<\infty$ for all $i\geq 0$. For $i\geq 0$ let $R_i$ be the integral closure of $R$ in $K_i$. For each $i\geq 0$, $R_i$ is a Dedekind domain with countably many maximal ideals. Set $S=\bigcup_{i\geq 0} R_i$ (which is the integral closure of $R$ in $\bigcup_{i\geq 0} K_i$). Suppose that $\{M_i\}_{i\geq 0}$ is the sequence of all maximal ideals of $R$.

\begin{itemize}
\item [(1)] If for every $i>0$ and every maximal ideal $Q$ of $R_i$ lying over some $M_j$ with $j\leq i$, it follows that $QR_{i+1}$ is a square-free product of maximal ideals of $R_{i+1}$, then $S$ is an SP-domain (see \cite[Proposition 6.5.2]{REK}).
\item [(2)] If there is an ascending chain $\{Q_j\}_{j\geq 0}$, where $Q_{i+1}$ is a maximal ideal of $R_{i+1}$ lying over the maximal ideal $Q_i$ of $R_i$ and $Q_{i+1}\ne Q_iR_{i+1}$ for each $i\geq 0$, then $S$ is not a Dedekind domain (see \cite[Proposition 6.5.5]{REK}).
\item [(3)] Suppose that $M_0 \subseteq \bigcup_{i>0} M_i$. If for every $i>0$, $QR_{i+1}$ is a maximal ideal of $R_{i+1}$ whenever $Q$ is a maximal ideal of $R_i$ lying over some $M_j$ with $0<j\leq i$, then $S$ is a domain that satisfies the ascending chain condition for principal ideals (ACCP) (see \cite[Proposition 6.5.4]{REK}).
\end{itemize}
If all the residue fields of $R$ are finite, $M_0$ is not principal and the Picard group of $R$ is torsion-free, then it is possible to construct an ascending chain $\{K_i\}_{i\geq 0}$ of extension fields of $K$ such that (1), (2), and (3) are satisfied and $[K_{i+1}:K_i]=2$ for each $i\geq 0$ (see \cite[Theorem 42.5]{Gil}). (Observe that $M_0 \subseteq \bigcup_{i>0} M_i$ since ${\rm Pic}(R)$ is torsion-free and $M_0$ is not principal.)

To give an explicit example, we consider the domain $\overline{D}$ in \cite[Example 1]{GR}, which can be obtained by using the previous construction. By the prior remarks it is evident that $\overline{D}$ is an SP-domain that satisfies  ACCP and that is not a Dedekind domain. In particular, $\overline{D}$ is not a B\'ezout domain, because a B\'ezout domain that satisfies ACCP is a PID. (If a B\'ezout domain satisfies  ACCP, then it satisfies  ACC for finitely generated ideals, which is equivalent to being Noetherian, hence it is a PID.)}
\end{ex}
SP-domains are characterized in several ways in \cite[Theorem 2.1]{Olb}. We strengthen some of the characterizations in the following lemma. Recall that a domain is {\it treed} if any two prime ideals contained in a maximal ideal of $R$ are comparable with respect to set inclusion.

\begin{lem}  \label{SP}
The following  are equivalent for a
 domain $R$ that is not a field.

\begin{itemize}

\item[(1)]  $R$ is an {\it SP}-domain.

\item[(2)]  $R$ is an almost Dedekind domain for which each maximal ideal of $R$ contains a finitely generated ideal that is not contained in the square of any maximal ideal of $R$.

\item[(3)]  Each proper nonzero ideal $A$ of $R$ can be represented uniquely as
a  product $A = J_1J_2\cdots J_n$ of radical ideals $J_i$ such that $J_1
\subseteq J_2 \subseteq \cdots \subseteq J_n$.

\item[(4)]  $R$ is a treed domain for which the radical of each  proper nonzero principal ideal  is  invertible.

%\item[(5)]
%$R$ is a one-dimensional domain for which the radical of every nonzero proper principal ideal  is invertible.

%\item[(iv)]  For every proper ideal $A$ of $R$, there exist radical ideals $J_1
%\subseteq J_2 \subseteq \cdots \subseteq J_n$ such that $A = J_1J_2 \cdots
%J_n$.

\end{itemize}
\end{lem}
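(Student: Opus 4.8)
The plan is to use throughout that an SP-domain is almost Dedekind \cite{VY}, hence a one-dimensional (so treed) Pr\"ufer domain, and to organize everything around the \emph{order functions}: for a maximal ideal $M$ (so $R_M$ is a DVR, say $MR_M=tR_M$) and a nonzero fractional ideal $I$, let $v_M(I)$ be the integer with $IR_M=(tR_M)^{v_M(I)}$, and set $v_M(x)=v_M(xR)$ for $0\ne x$ in the quotient field $F$. One uses freely, for nonzero ideals $I,J$: $I=J$ iff $v_M(I)=v_M(J)$ for all $M$; $I\subseteq J$ iff $v_M(I)\ge v_M(J)$ for all $M$; $v_M(IJ)=v_M(I)+v_M(J)$; $I$ is radical iff $v_M(I)\le 1$ for all $M$; and, since $\dim R=1$, a nonzero radical ideal $I$ equals $\bigcap_{v_M(I)=1}M$. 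The logical skeleton I would follow is: $(3)\Rightarrow(1)$ is immediate; prove $(1)\Rightarrow(3)$, $(1)\Rightarrow(2)$ and $(1)\Rightarrow(4)$ directly; deduce $(4)\Rightarrow(1)$ via $(4)\Rightarrow(2)$; and take $(2)\Rightarrow(1)$ from \cite[Theorem 2.1]{Olb}.

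For $(1)\Rightarrow(3)$: write a proper nonzero ideal $A$ as $A=I_1\cdots I_r$ with each $I_j$ a proper radical ideal, so $v_M(A)=\#\{j:I_j\subseteq M\}$; put $n=\max_M v_M(A)$, which is finite and $\ge1$ since $1\le v_M(A)\le r$ whenever $M\supseteq A$. For $1\le i\le n$ set $V_i=\{M\in\Max(R):v_M(A)\ge i\}$ and $J_i=\bigcap_{M\in V_i}M$. Each $V_i$ is closed in $\Max(R)$, being the finite union $\bigcup_{|S|=i}\bigcap_{j\in S}V(I_j)$ of closed sets; hence $V(J_i)=V_i$, so $J_i$ is a proper radical ideal with $v_M(J_i)=1$ exactly when $v_M(A)\ge i$, and $J_1\subseteq\cdots\subseteq J_n$ since $V_1\supseteq\cdots\supseteq V_n$. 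Then $v_M(J_1\cdots J_n)=\#\{i\le n:v_M(A)\ge i\}=v_M(A)$ for every $M$, so $A=J_1\cdots J_n$. For uniqueness, if $A=J_1\cdots J_n=J'_1\cdots J'_{n'}$ are two ascending radical factorizations with all factors proper, then $\{i:J_i\subseteq M\}$ is the down-set $\{1,\dots,v_M(A)\}$, so $n=\max_M v_M(A)=n'$, $V(J_i)=\{M:v_M(A)\ge i\}=V(J'_i)$, and hence $J_i=\bigcap_{M\in V(J_i)}M=J'_i$.

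For $(1)\Rightarrow(2)$ and $(1)\Rightarrow(4)$ I would use a colon-ideal trick. Given a proper nonzero principal ideal $(a)$, write $(a)=J_1\cdots J_n$ as in $(3)$; then $J_1=\sqrt{(a)}$, and from $J_1(J_2\cdots J_n)=(a)$ and the order relations one checks $(a):J_1=J_2\cdots J_n$. For $(2)$: fix a maximal ideal $M$ and choose $0\ne a\in M$ with $v_M(a)=1$; then $J_1\subseteq M$ while $J_i\not\subseteq M$ for $i\ge2$, so $J_2\cdots J_n\not\subseteq M$ and one may pick $c\in(J_2\cdots J_n)\setminus M$. The colon $(a):(c)$ equals $R\cap(a/c)R$, an intersection of two invertible fractional ideals, hence is invertible (so finitely generated); and $v_N\big((a):(c)\big)=\max(0,v_N(a)-v_N(c))\le\max(0,v_N(a)-v_N(J_2\cdots J_n))=\max(0,v_N(J_1))\le1$ for all $N$, so it is radical and lies in no $N^2$, while $v_M\big((a):(c)\big)=v_M(a)-v_M(c)=1$ gives $(a):(c)\subseteq M$; this is $(2)$. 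For $(4)$: $R$ is treed since Pr\"ufer, and it remains to see that $\sqrt{(a)}=J_1$ is invertible for every nonzero nonunit $a$. Put $B=(a):J_1=J_2\cdots J_n$ and fix $M\supseteq(a)$. Since $B$ is a \emph{finite} product, $BR_M=\prod_{i\ge2}(J_iR_M)=(tR_M)^{v_M(a)-1}$, so some $b\in B$ has $v_M(b)=v_M(a)-1$. Then $x:=b/a$ satisfies $v_N(x)=v_N(b)-v_N(a)\ge v_N(B)-v_N(a)=-v_N(J_1)$ for all $N$, so $xJ_1\subseteq R$, i.e.\ $x\in J_1^{-1}$, while $v_M(x)=-1$. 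Thus $v_M(J_1^{-1})<0$ for every $M\supseteq(a)$ and $v_M(J_1)=0$ otherwise, so $v_M(J_1J_1^{-1})\le0$ in all cases, forcing $J_1J_1^{-1}=R$; hence $\sqrt{(a)}$ is invertible.

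Finally, for $(4)\Rightarrow(1)$ I would show $(4)$ implies $(2)$ and invoke \cite[Theorem 2.1]{Olb}. The pointwise part of $(2)$ is easy once $R$ is known to be almost Dedekind: any maximal ideal $M$ contains the radical ideal $\sqrt{(a)}$ for a nonzero nonunit $a\in M$, which is finitely generated because invertible, and hence lies in no $N^2$. The real point is that $(4)$ forces $R$ to be almost Dedekind: using that $R$ (hence each $R_M$) is treed, for $0\ne a\in M$ the invertibility of $\sqrt{(a)}$ makes $\sqrt{aR_M}$ a principal prime of $R_M$, and running this over all such $a$ one extracts that $R$ is one-dimensional and Pr\"ufer, after which $\sqrt{aR_M}=MR_M$ is principal and $R_M$ is a DVR. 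I expect the genuine obstacles to be precisely these two ``reverse'' passages back into $(1)$: the implication $(2)\Rightarrow(1)$, which is the substance of \cite[Theorem 2.1]{Olb} and rests on a quasicompactness argument for $\Max(R)$ producing radical factorizations of arbitrary proper ideals, and the verification just described that a treed domain in which the radical of every proper nonzero principal ideal is invertible must be one-dimensional and Pr\"ufer. The forward directions $(1)\Rightarrow(2),(3),(4)$ are, by contrast, routine manipulation of the $v_M$ once the identity $(a):\sqrt{(a)}=J_2\cdots J_n$ is in hand.
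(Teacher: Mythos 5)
Your proposal is correct in substance but follows a genuinely different route from the paper. The paper's proof is largely a matter of citation: it gets $(1)\Rightarrow(2)$ from [VY] plus [Olb, Theorem 2.1], takes $(2)\Rightarrow(3)$ verbatim from [Olb, Theorem 2.1], and only does real work on $(4)\Rightarrow(1)$ (plus the observation that $(3)\Rightarrow$ SP $\Rightarrow$ treed). You instead make the forward implications $(1)\Rightarrow(2),(3),(4)$ self-contained via the order functions $v_M$: the construction $J_i=\bigcap_{v_M(A)\ge i}M$ with $V_i$ closed as a finite union of finite intersections of the $V(I_j)$ is a clean direct proof of the ascending factorization and its uniqueness, and the colon-ideal computations for $(1)\Rightarrow(2)$ and the invertibility of $\sqrt{aR}$ are correct (the load-bearing fact that a nonzero radical ideal $I$ of an almost Dedekind domain satisfies $v_N(I)\le 1$ does hold: for $0\ne x\in I$ one has $\sqrt{xR}\subseteq I$ and $\sqrt{xR}R_N=\sqrt{xR_N}=NR_N$). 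What your approach buys is independence from the factorization machinery of [Olb] for everything except the single return implication $(2)\Rightarrow(1)$; what it costs is that you still need that citation, exactly as the paper needs it for $(2)\Rightarrow(3)$, so neither argument is fully self-contained at the same point.

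The one place where your write-up is materially thinner than the paper is the step you yourself flag: showing that $(4)$ forces $R$ to be almost Dedekind. The paper carries this out: assuming primes $P\subsetneq Q\subseteq M$, it picks $0\ne x\in P$ and $y\in Q\setminus P$, uses treedness to see that $\sqrt{xR}R_M\subseteq\sqrt{yR}R_M$ is a containment of \emph{principal primes} of the domain $R_M$, forces equality (nested distinct principal primes in a domain are impossible), and derives the contradiction $y\in PR_M$; one-dimensionality plus local principality of $\sqrt{aR_M}=MR_M$ then gives the DVR conclusion. Your phrase ``running this over all such $a$ one extracts that $R$ is one-dimensional and Pr\"ufer'' gestures at this but omits the two-element comparison that actually kills the chain $P\subsetneq Q$; you should supply it. (Also note the paper closes $(4)\Rightarrow(1)$ via [WM, Proposition 1.3] rather than via $(2)$ and [Olb, Theorem 2.1]; both endings are legitimate.)
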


\begin{proof} (1) $\Rightarrow$ (2) An SP-domain is necessarily an
  {almost Dedekind domain} \cite[Theorem 2.4]{VY}. Thus (2) follows from \cite[Theorem 2.1]{Olb}.

%(2) $\Rightarrow$ (3) A one-dimensional domain is a treed domain, so (3) is a consequence of \cite[Theorem 2.1]{Olb}.

(2) $\Rightarrow$ (3) This is proved in \cite[Theorem 2.1]{Olb}.

(3) $\Rightarrow$ (4) It is clear that (3) implies $R$ is an SP-domain. Since an SP-domain that is not a field is an almost Dedekind domain,  the only nonmaximal prime ideal is $(0)$, so that $R$ is treed. By \cite[Theorem 2.1]{Olb}, an SP-domain has the property that the radical of every nonzero finitely generated ideal is invertible.

(4) $\Rightarrow$ (1)  We first show that $R$ is an almost Dedekind domain.
Let $M \in \Max(R),$ and suppose that $P$ and $Q$ are prime ideals with $P \subsetneq Q \subseteq M$.
%It is sufficient to show that $R_M$ is one-dimensional. Let $P \subseteq Q$ be nonzero prime ideals, and suppose that $P \ne Q$.
Let $x \in P$ be nonzero, and let $y \in Q \setminus P$.  Then  $\sqrt{xR} \subsetneq \sqrt{yR}$ are by assumption invertible ideals. Thus, since $R_M$ is a treed domain,  $(\sqrt{xR})R_M \subseteq (\sqrt{yR})R_M$ is a containment of principal prime ideals. Since $R_M$ is a domain, this forces,  $(\sqrt{xR})R_M  = (\sqrt{yR})R_M$. But then $y \in (\sqrt{xR})R_M \subseteq PR_M$,  contrary to the assumption that $y \not \in P$. This shows that $R_M$ is a one-dimensional domain. Since also the radical of every nonzero proper principal ideal is invertible, it follows that $R_M$ is a DVR.  Therefore, $R$ is an almost Dedekind domain.  Finally, from (4) we have that every maximal ideal of $R$ contains an invertible radical ideal, and hence by  \cite[Proposition 1.3]{WM}, $R$ is an SP-domain.
\end{proof}

Using the lemma we show next that whether an SP-domain is B\'ezout depends only on the radicals of principal ideals.

%\begin{rem} {\em The first part of the  proof can be modified to show that
%if $R$ is a one-dimensional domain with $J(R) \ne 0$, then for each finitely generated  ideal $I$ of $R$, there exists $x \in I$ such that  $\sqrt{I} = \sqrt{xR}$. Thus a one-dimensional Pr\"ufer domain $R$ with $J(R) \ne 0$ is a QR-domain, meaning that every overring of $R$ is a localization  of $R$ {\bf [ref]}. }
%\end{rem}

%We improve on the characterization in Lemma~\ref{SP}(3) by showing that in order for a one-dimensional domain $R$ to be an SP-domain, it suffices that the radical of every principal ideal is finitely generated.

\begin{thm} \label{BSPG}  \label{SPG}
A domain $R$ is
 a B\'ezout SP-domain if and only if  $R$ is a treed domain such that the radical of every  principal ideal of $R$ is  principal.
\end{thm}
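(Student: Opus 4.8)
The plan is to read the forward direction straight off Lemma~\ref{SP} and to prove the converse by reducing, through the SP-factorization of ideals, to a single assertion about radical principal ideals. Throughout, the case in which $R$ is a field is trivial, so assume $R$ is not a field. If $R$ is a B\'ezout SP-domain, then by the implication (1)$\Rightarrow$(4) of Lemma~\ref{SP} it is treed and the radical of every proper nonzero principal ideal is invertible, hence, $R$ being B\'ezout, principal; since $\sqrt{R}=R$ and $\sqrt{(0)}=(0)$ are principal as well, the radical of every principal ideal of $R$ is principal. That is the forward direction.

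For the converse, suppose $R$ is treed and the radical of every principal ideal of $R$ is principal. A nonzero principal ideal is invertible, so the radical of each proper nonzero principal ideal is invertible, and by (4)$\Rightarrow$(1) of Lemma~\ref{SP} the ring $R$ is an SP-domain, hence a one-dimensional Pr\"ufer domain. It is then enough to show that every invertible ideal is principal. A proper nonzero invertible ideal $I$ is a product $I=J_1\cdots J_n$ of (necessarily nonzero) radical ideals, by the SP-property, and each factor of an invertible ideal is invertible (if $AB$ is invertible then $A\bigl(B(AB)^{-1}\bigr)=R$, so $A$ is invertible, and likewise each of finitely many factors). Thus we are reduced to proving that every invertible radical ideal is principal.

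Let $K$ be an invertible, hence finitely generated, radical ideal, say $K=(b_1,\dots,b_m)$ with all $b_i\neq0$, and let $d_iR=\sqrt{b_iR}$, which is principal by hypothesis. For every maximal ideal $M$ one has $b_i\in M\iff d_i\in M$, so $K$ and $(d_1,\dots,d_m)$ have the same radical; both are radical (the latter because its localization at each $M$ is radical in the valuation ring $R_M$, and radicalness is local), so $K=(d_1,\dots,d_m)$. An induction on $m$ now reduces us to the claim that \emph{if $d_1R$ and $d_2R$ are radical ideals, then $(d_1,d_2)$ is principal}: the case $m\le1$ is immediate, and for $m\ge2$ the ideal $(d_1,\dots,d_{m-1})$ is again an invertible radical ideal, hence principal by induction, say $eR$, and $K=(e,d_m)$ falls under the claim. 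To prove the claim, note that an intersection of radical ideals is radical, so
\[
d_1R\cap d_2R=\sqrt{d_1R\cap d_2R}=\sqrt{(d_1R)(d_2R)}=\sqrt{d_1d_2R},
\]
which is principal by hypothesis; multiplying the Pr\"ufer identity $(d_1R+d_2R)(d_1R\cap d_2R)=d_1d_2R$ (\cite[Theorem~25.2(d)]{Gil}) by the inverse of the invertible ideal $d_1R\cap d_2R$ exhibits $(d_1,d_2)=d_1d_2R\,(d_1R\cap d_2R)^{-1}$ as principal.

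The main obstacle is precisely the claim about $(d_1,d_2)$; the rest is bookkeeping with Lemma~\ref{SP} and routine facts about Pr\"ufer domains. The decisive point is that for principal \emph{radical} ideals the product and the intersection have the same radical, which by hypothesis is principal, so the intersection is itself principal, and the Pr\"ufer identity $(I+J)(I\cap J)=IJ$ then transfers principality from the intersection to the sum.
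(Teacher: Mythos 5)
Your proof is correct, and while its skeleton matches the paper's (both directions lean on Lemma~\ref{SP}, and the converse reduces via the SP-factorization and an induction to a two-generator statement), you execute the decisive step by a genuinely different mechanism. The paper proves that $\sqrt{xR+yR}$ is principal for all $x,y$ by a computation with sets of maximal ideals (its Claims 1 and 2): writing $aR=\sqrt{xyR}$, $bR=\sqrt{xR}$, $cR=\sqrt{yR}$, it applies the hypothesis a second time to obtain $dR=\sqrt{(a/b)R}\cap\sqrt{(a/c)R}$ and then identifies $\sqrt{xR+yR}$ as $\sqrt{(a/d)R}$, never inverting an ideal. You instead observe that for radical principal ideals $d_1R,d_2R$ the intersection $d_1R\cap d_2R=\sqrt{d_1d_2R}$ is principal by hypothesis, and let the Pr\"ufer identity $(I+J)(I\cap J)=IJ$ transfer principality from the intersection to the sum, producing the generator $d_1d_2/g$ directly where $gR=d_1R\cap d_2R$. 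Your route is shorter and avoids the maximal-ideal bookkeeping of Claim 1, at the modest cost of the preliminary verifications that a finitely generated radical ideal $K=(b_1,\dots,b_m)$ equals $(d_1,\dots,d_m)$ with $d_iR=\sqrt{b_iR}$ and that the latter is radical (both of which you correctly settle by localizing at the DVRs $R_M$); the paper's route is more explicit and stays entirely inside $R$-element manipulations. Both arguments ultimately exploit the same fact, namely that the hypothesis applied to the product $d_1d_2R$ controls the intersection $d_1R\cap d_2R$.
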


\begin{proof}
We may assume $R$ is not a field since otherwise the claim is clear.
If $R$ is a B\'ezout SP-domain, then by Lemma~\ref{SP}  the radical of each  principal ideal of $R$ is  principal.
Conversely, suppose that $R$ is a treed domain for which the radical of every  principal ideal is principal. By Lemma~\ref{SP}, $R$ is an SP-domain, so it remains to show that $R$ is a B\'ezout domain. We do this by proving two claims.
 \smallskip

{\textsc{Claim 1:}} If $a,b,c$ are nonzero in $R$ with $aR,bR$ radical ideals and $a = bc$, then
\[
\{M\in\Max(R): a\in bM\}=\{M\in\Max(R): a\in M,b\not\in M\}.
\]

To verify the claim, let $M\in\Max(R)$. Suppose $a\in bM$. Since $a = bc$, we have $c \in MR_M$.
 Since $R_M$ is a DVR and $aR_M$ is a nonzero radical ideal, it must be that $MR_M = aR_M = bcR_M$. Therefore, since $c \in MR_M$, it cannot be that $b \in M$.
 The reverse inclusion is clear in light of the fact that $a = bc$.
 \smallskip

{\textsc{Claim 2:}} $\sqrt{xR+yR}$ is principal for all $x,y\in R$.

\smallskip

To prove Claim 2,  let $x,y\in R$. We may assume $x$ and $y$ are nonzero. By assumption, there exist $a,b,c\in R$ such that $\sqrt{xyR}=aR$, $\sqrt{xR}=bR$, and $\sqrt{yR}=cR$. Note that $aR = bR \cap cR$. Moreover, there is  $d\in R$ such that $\sqrt{(a/b)R}\cap\sqrt{({a}/{c})R}=dR.$ %Observe that $aR \subseteq dR$.
By Claim 1 we have
\begin{eqnarray*} \{M\in\Max(R): d\in M\} & = &\{M\in\Max(R):a\in bM\}\cup\{M\in\Max(R):{a}\in cM\} \\
\: & = & \{M\in\Max(R): a\in M,(b\not\in M\textnormal{ or }c\not\in M)\}.
\end{eqnarray*}
Therefore, Claim 1 implies that
\begin{eqnarray*} \{M\in\Max(R): x,y\in M\} &= & \{M\in\Max(R): b,c\in M\} \\
& = & \{M\in\Max(R): a\in M,d\not\in M\} \\
& = & \{M\in\Max(R):{a}\in dM\}.
\end{eqnarray*} Consequently, $\sqrt{xR+yR}$ is the radical of the principal ideal $({a}/{d})R$, and hence by assumption  principal.

Using Claim 2 it follows by induction that every finitely generated radical ideal of $R$ is principal. Since $R$ is an SP-domain, every nonzero finitely generated ideal of $R$ is a product of  finitely generated, hence principal, radical ideals. Therefore, every finitely generated ideal of $R$ is principal and  $R$ is a B\'ezout domain.
\end{proof}

\begin{cor} \label{BSPG2}  \label{SPG2}
A domain $R$ is a pseudo-Dedekind SP-domain if and only if $R$ is a treed domain such that the radical of every nonzero divisorial ideal is invertible. If also the radical of every nonzero divisorial ideal is principal, then $R$ is a B\'ezout domain.
\end{cor}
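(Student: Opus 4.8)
The plan is to deduce the corollary from Lemma~\ref{SP} and Theorem~\ref{BSPG}, using only two elementary facts: a factor of an invertible ideal is again invertible, and the product of a divisorial ideal with an invertible fractional ideal is again divisorial. As in the proof of Theorem~\ref{BSPG} we may assume $R$ is not a field.

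For the forward implication, suppose $R$ is a pseudo-Dedekind SP-domain. By Lemma~\ref{SP}, $R$ is treed, since an SP-domain is almost Dedekind and hence one-dimensional. Let $I$ be a nonzero divisorial ideal; we may take $I\subsetneq R$ (the case $I=R$ being trivial). Since $R$ is pseudo-Dedekind, $I$ is invertible, and by Lemma~\ref{SP}(3) we may write $I=J_1J_2\cdots J_n$ with $J_1\subseteq J_2\subseteq\cdots\subseteq J_n$ radical. Each $J_i$ is a factor of the invertible ideal $I$ and is therefore invertible. Moreover $J_1^n\subseteq J_1J_2\cdots J_n=I\subseteq J_1$, and taking radicals (using that $J_1$ is radical) gives $\sqrt{I}=J_1$; hence $\sqrt{I}$ is invertible.

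For the converse, suppose $R$ is treed and the radical of every nonzero divisorial ideal of $R$ is invertible. Every nonzero principal ideal is divisorial, so the radical of each nonzero proper principal ideal is invertible, and hence $R$ is an SP-domain by the implication $(4)\Rightarrow(1)$ of Lemma~\ref{SP}; in particular $R$ is one-dimensional. It remains to show $R$ is pseudo-Dedekind, i.e.\ that each nonzero divisorial ideal $I$ is invertible. We may assume $I\subsetneq R$ and argue by induction on the length $n$ of the factorization $I=J_1J_2\cdots J_n$ into proper radical ideals $J_1\subseteq\cdots\subseteq J_n$ given by Lemma~\ref{SP}(3). As in the previous paragraph $\sqrt{I}=J_1$, which is invertible because $I$ is divisorial. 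If $n=1$ then $I=J_1$ is invertible. If $n>1$, then $J_1^{-1}I=J_2\cdots J_n$ is a nonzero proper integral ideal that is divisorial (being the product of the divisorial ideal $I$ with the invertible fractional ideal $J_1^{-1}$), and $J_2\subseteq\cdots\subseteq J_n$ is its radical factorization, of length $n-1$; by the inductive hypothesis $J_1^{-1}I$ is invertible, and hence so is $I=J_1\cdot(J_1^{-1}I)$.

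Finally, if in addition the radical of every nonzero divisorial ideal of $R$ is principal, then in particular the radical of every principal ideal of $R$ is principal, so by Theorem~\ref{BSPG} the treed domain $R$ is a B\'ezout domain. The one step requiring care is the inductive step of the converse: to apply Lemma~\ref{SP}(3) to $J_1^{-1}I$ with a strictly shorter factorization one must know that $J_1^{-1}I$ is still divisorial. I would support this with the standard identity $(AB)_v=A_vB$ for $B$ an invertible fractional ideal, which follows from $(AB)^{-1}=A^{-1}B^{-1}$ when $B$ is invertible, recording that short verification rather than leaving it implicit.
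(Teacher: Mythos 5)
Your proof is correct and follows essentially the same route as the paper's: both directions reduce to Lemma~\ref{SP}, and the converse is the same induction that peels off $\sqrt{I}=J_1$ (invertible by hypothesis) from the radical factorization of Lemma~\ref{SP}(3) and observes that the cofactor remains divisorial because it is the product of a divisorial ideal with an invertible fractional ideal. The only difference is one of detail: the paper's forward implication simply cites the SP-property that radicals of finitely generated (hence of invertible, hence of divisorial) ideals are invertible, whereas you re-derive $\sqrt{I}=J_1$ from the factorization; both are fine.
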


\begin{proof}  We may assume $R$ is not a field. If $R$ is a pseudo-Dedekind SP-domain, then since every nonzero divisorial ideal is invertible, Lemma~\ref{SP} implies that the radical of every nonzero divisorial ideal is invertible.
Conversely, suppose $R$ is a treed domain for which the radical of every nonzero divisorial ideal of $R$ is invertible. By Lemma~\ref{SP},
 $R$ is an SP-domain. Let $I$ be a nonzero divisorial ideal of $R$. By Lemma~\ref{SP}(3), $I = \sqrt{I}A$ for some ideal $A$ such that $A = R$ or $A$   is a product of radical ideals of $R$.  Since $I$ is divisorial and by assumption $\sqrt{I}$ is invertible, it follows that $A$ is divisorial. An inductive argument (which by Lemma~\ref{SP}(3) terminates after  finitely many steps) now shows that $I$ is a product of invertible ideals, and hence is itself invertible. Therefore, $R$ is a pseudo-Dedekind domain. The last statement follows from Theorem~\ref{SPG}.
\end{proof}

\begin{rem} {\em As noted in the proof of Theorem~\ref{BSPG}, a one-dimensional domain $R$ is an SP-domain if and only if each maximal ideal of $R$ contains an invertible radical ideal (see \cite[Proposition 1.3]{WM}). However, it is not true that $R$ is a B\'ezout SP-domain if and only if each maximal ideal of $R$ contains a nonzero principal radical ideal.
There exists a Dedekind domain $R$ with no principal maximal ideals and a sequence $\{M_i\}_{i=1}^\infty$ of all maximal ideals of $R$ such that $M_iM_{i+1}$ is principal for each $i$ (see \cite[Example 3-2]{CL}). In particular, $R$ is an SP-domain and every nonzero radical ideal of $R$ contains a nonzero principal radical ideal, yet $R$ is not a B\'ezout domain.}
\end{rem}

\begin{rem} {\em Let $R$ be an integral domain. Recall that $R$ is a factorial domain (or unique factorization domain) if every principal ideal of $R$ is a finite product of principal prime ideals. Furthermore, $R$ is called radical factorial (see \cite{RE}) if each of its principal ideals is a finite product of principal radical ideals. These notions are the ``principal ideal analogues'' of Dedekind domains and SP-domains. We  discuss how these classes of domains fit into the theory presented in this section. For instance, it is known that if the radical of every principal ideal of $R$ is principal, then $R$ is radical factorial (see \cite[Proposition 2.10]{RE}). By using the same methods as in the proof of \cite[Proposition 3.11]{RE} it can be shown that every treed radical factorial domain is an SP-domain. But even a Dedekind domain can fail to be radical factorial (see \cite[Example 4.3]{RE}). Clearly, every SP-domain with trivial Picard group is radical factorial (see \cite[Proposition 3.10(2)]{RE}). Note that $R$ is a Dedekind domain if and only if $R$ is an SP-domain of finite character (i.e., every nonzero element of $R$ is contained in only finitely many maximal ideals). It can be shown that $R$ is factorial if and only if $R$ is of ``finite height-one character'' (i.e., every nonzero element is contained in only finitely many height-one prime ideals) and the radical of every principal ideal of $R$ is principal (see \cite[Theorem 2.14]{RE}). On the other hand a radical factorial domain of finite height-one character (and finite character) can fail to be factorial (see \cite[Example 4.3]{RE}). It is well known that $R$ is a PID if and only if $R$ is a factorial Pr\"ufer domain.

(Since a factorial domain has trivial class group and thus trivial Picard group, it follows that a factorial Pr\"ufer domain is a B\'ezout domain. Moreover, a factorial domain satisfies ACCP. Therefore, a factorial Pr\"ufer domain is a B\'ezout domain that satisfies ACCP, and thus it is a PID.)

This shows that a factorial domain that is not a PID (e.g. $\mathbb{Q}[X,Y]$) is an example of an integral domain where the radical of every principal ideal is principal, but that is neither an SP-domain nor a B\'ezout domain. In particular, we infer that the property ``treed'' cannot be omitted in Lemma~\ref{SP} and Theorem~\ref{BSPG}.}
\end{rem}

We characterize next the SP-domains with nonzero  Jacobson radical. It is this class of SP-domains with which  we will be particularly concerned in the next sections.

\begin{lem} \label{jpow} Let $R$ be an SP-domain with $J(R) \ne 0$. If there is $n>0$ such that  $J(R)^n$ is principal, then $I^n$ is principal for each invertible ideal $I$ of $R$.
\end{lem}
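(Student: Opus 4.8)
The plan is to exploit that an SP-domain is one-dimensional Prüfer (so an integral domain all of whose localizations at maximal ideals are DVRs), and that in such a ring invertibility is a local-plus-finite-character condition: an invertible ideal $I$ is principal iff there is a single element $x$ with $v_M(x) = v_M(I)$ at every maximal ideal $M$. Writing $J = J(R)$ and assuming $J^n = aR$ is principal, the idea is that $J$ "sees" every maximal ideal — indeed $v_M(J) = 1$ for every $M$ that is dull (i.e. $M \subseteq$ every other maximal ideal's... no) — but more to the point, $J$ is contained in every maximal ideal, so $v_M(J) \geq 1$ for all $M$, while $v_M(J^n) = v_M(a) = n\,v_M(J)$.

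First I would set up notation: for each maximal ideal $M$ let $v_M$ be the normalized valuation of the DVR $R_M$. Given an invertible ideal $I$, the family $(v_M(I))_{M \in \Max(R)}$ has finite support among dull... rather, since $I$ is invertible it is finitely generated locally, and $I = \bigcap_M I R_M \cap R$; invertibility is equivalent to $I$ being locally principal and finitely generated, equivalently $v_M(I) = 0$ for all but finitely many $M$ in the relevant sense, but in an almost Dedekind domain that need not hold — so instead I would use the criterion: an invertible ideal $I$ is principal iff $I \cdot K$ is principal for a suitable known principal $K$, or directly that $I^n$ principal follows from exhibiting a generator. The key computation: for any invertible $I$, consider $I^n J^n = I^n a R$. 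I claim $I^n J^n$ is the radical-times-stuff... Actually the cleanest route: in an SP-domain every maximal ideal $M$ contains an invertible radical ideal, and $J^n$ being principal forces, locally at each $M$, $v_M(J) \cdot n = v_M(a)$, so $v_M(J) = v_M(a)/n$ is constant-ish. Then for invertible $I$, the element $b$ with $bR = $ (the ideal $\bigcap_M$ with $v_M(b) = n\, v_M(I)$) — I want to show $I^n$ is principal by showing $I^n$ and $J^n = aR$ differ by something I can control. Concretely: take $I$ invertible; then $IJ$ is invertible, and I would show $(IJ)^n$ is principal by writing $(IJ)^n = I^n J^n = a I^n$, so it suffices to show $(IJ)^n$ is principal, equivalently (dividing by the principal $aR$) reduces to the same problem — that's circular, so instead I would show directly that $(IJ)^n$ is principal using that $IJ \subseteq J$ makes $IJ$ "bounded" and then invoke that in a Prüfer domain a locally principal ideal squeezed between principal ideals in a suitable sense is principal.

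The honest clean argument I would write: since $R$ is almost Dedekind, $I$ invertible means $I$ is locally principal; set $c_M = v_M(I) \in \Z_{\geq 0}$, zero for all but — no. I'll instead use: $J^n = aR$ principal and $J = \sqrt{J}\cdot(\text{ideal})$ by the SP-factorization; since $J$ is contained in every maximal ideal, $\sqrt{J} = J$ would force $J$ radical, which need not hold, but $J^n$ principal does force $v_M(J)$ to be eventually constant. Then the generator of $I^n$ is produced as follows: pick $x \in I$ with $I = xR$ locally at the finitely many $M$ where... The main obstacle, and where I'd spend the real effort, is precisely this finiteness/constancy issue: showing that from $J^n$ principal one extracts enough uniformity on the valuations $v_M(J)$ to build an actual global generator of $I^n$ for every invertible $I$. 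I expect the resolution to come from the observation that $J^n$ principal means $R/J^n R$-type considerations pin down $v_M(J)$ to lie in $\{0, 1, \dots\}$ with $n \mid v_M(a)$ uniformly, combined with the fact (Lemma~\ref{SP}) that radicals of finitely generated ideals are invertible, letting one reduce any invertible $I$ to a product of radical ideals; then $I^n$ becomes a product of $n$-th powers of invertible radical ideals, and the claim follows once established for invertible radical ideals, where $I^n$ relates directly to $J^n = aR$ through the local equality $v_M(I) \in \{0,1\}$ and a comparison of supports. So the endgame is: reduce to $I$ an invertible radical (hence locally maximal) ideal, show $v_M(I) \le v_M(J)$ everywhere, form $(a/(\text{generator of }J^n/I^n))$... and conclude $I^n$ principal.
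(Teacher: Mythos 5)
There is a genuine gap. Your reduction to the case of an invertible \emph{radical} ideal $L$ (via the SP-factorization of an invertible ideal into a product of invertible radical ideals) is correct and is exactly the first step of the paper's proof, and your local computations are sound: since $J:=J(R)$ is a radical ideal contained in every maximal ideal, $v_M(J)=1$ and $v_M(a)=n$ for every maximal ideal $M$, while $v_M(L)\in\{0,1\}$. But everything after that is either a false start or circular, and you acknowledge as much: you never produce a global generator of $L^n$. In particular the closing move, ``form $a/(\mbox{generator of }J^n/I^n)$,'' presupposes that the invertible ideal $J^nI^{-n}$ is principal; since $J^n=aR$ is already principal, that is equivalent to $I^n$ being principal, so the argument begs the question.

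The missing idea is the following. Since $R/J$ is von Neumann regular, the finitely generated ideal $L/J$ is principal, so $L=xR+J$ for some nonzero $x\in L$; as $J$ lies in every maximal ideal and $R$ is one-dimensional, a maximal ideal contains $L$ if and only if it contains $x$, whence $L=\sqrt{xR}$ --- a \emph{single element} whose support is exactly that of $L$. The paper then exhibits the explicit generator $x^{n+1}+b$ of $L^n$ (where $J^n=bR$) and checks it locally: if $L\subseteq M$ then $v_M(x^{n+1})\geq n+1>n=v_M(b)$, so $v_M(x^{n+1}+b)=n=v_M(L^n)$; if $L\not\subseteq M$ then $v_M(x^{n+1})=0<n=v_M(b)$, so $v_M(x^{n+1}+b)=0=v_M(L^n)$. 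This one-element description of $L$ and the perturbation by the generator $b$ of $J^n$ are precisely the ``uniformity'' you correctly identify as the main obstacle but do not supply.
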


\begin{proof} Let $J = J(R)$, and let $n >0$ be such that $J^n$ is principal, say $J^n = bR$ for  $b \in J$. Since every proper invertible ideal of $R$ is a product of invertible radical ideals, to prove that $I^n$ is principal for each invertible ideal $I$ of $R$, it suffices to show that $L^n$ is a principal ideal of $R$ for each invertible radical ideal $L$ of $R$.
Let $L$ be an invertible radical ideal of $R$. %We claim that $L = \sqrt{xR}$ for some $x \in L$.
% If $J =  L$, then $L  = \sqrt{xR}$ for any $0 \ne x \in R$, so suppose that $L \not \subseteq J$.
  Since $R/J$ is a von Neumann regular ring, the finitely generated ideal $L/J$ of $R/J$ is a principal ideal, and hence there exists $x \in L$ nonzero such that $L = xR +J$.
 Since $J$ is the Jacobson radical of $R$, a maximal ideal of $R$ contains $L$ if and only if it contains $xR$. Thus, since $R$ is one-dimensional, $L = \sqrt{xR}$.   We claim that $L^n = (x^n+b)R$, and to prove this it suffices to show that for each maximal ideal $M$ of $R$, $L^nR_M = (x^n+b)R_M$.
To this end, let $M\in\Max(R)$. Since $R$ is an almost Dedekind domain, $MR_M$ is a principal ideal of $R_M$ and hence $M^nR_M \ne M^{n+1}R_M$. Thus, since
 $M^nR_M=J^nR_M= bR_M$, it follows that
  $b\not\in M^{n+1}$ and hence $x^{n+1}+b\in M^n\setminus M^{n+1}$.
Thus, if  $L\subseteq M$,  we have $L^nR_M=M^nR_M=(x^{n+1}+b)R_M$.
%it follows that $(x^{n+1}+b)R_M=M_M^k$ for some $k\in\mathbb{N}_0$. Since $x^{n+1}+b\in M^n\setminus M^{n+1}$ we obtain that $k=n$, hence $(L^n)_M=L_M^n=M_M^n=(x^{n+1}+b)R_M$.
Otherwise, if
$L\nsubseteq M$, we have $x^{n+1}+b\not\in M$, so that $(L^n)_M=R_M=(x^{n+1}+b)R_M$.
Therefore, $L^n=(x^{n+1}+b)R$.
\end{proof}

\begin{thm} \label{BSPG3}  \label{SPG3}
Let $R$ be a domain such that  $J(R) \ne 0$. Then $R$ is an SP-domain   if and only if $R$ is a one-dimensional domain such that $J(R)$ is an invertible ideal of $R$. If also $J(R)$ is
  a principal ideal of $R$, then $R$ is a B\'ezout domain.
\end{thm}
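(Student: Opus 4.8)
The plan is to establish the equivalence in two implications and then deduce the B\'ezout statement, using only the characterizations of SP-domains developed earlier in this section; no serious computation is required.

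For the forward implication, assume $R$ is an SP-domain with $J(R)\ne 0$. Then $R$ is not a field, so by \cite[Theorem~2.4]{VY} (or Lemma~\ref{SP}) it is almost Dedekind, hence one-dimensional. The crucial observation is that $J(R)$ is the radical of a principal ideal: pick any nonzero $x\in J(R)$; every prime of $R$ containing $xR$ is nonzero and hence maximal, and since $x\in J(R)$ it lies in every maximal ideal, so $\sqrt{xR}=\bigcap_{M\in\Max(R)}M=J(R)$. Since the radical of a nonzero proper principal ideal of an SP-domain is invertible (Lemma~\ref{SP}, cf.\ \cite[Theorem~2.1]{Olb}), $J(R)=\sqrt{xR}$ is invertible.

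For the converse, assume $R$ is one-dimensional with $J(R)$ a nonzero invertible ideal. Being one-dimensional, $R$ is treed, and $J(R)=\bigcap_{M\in\Max(R)}M$ is an intersection of primes, hence a radical ideal; it is invertible by hypothesis and is contained in every maximal ideal. Thus each maximal ideal of $R$ contains an invertible radical ideal, and \cite[Proposition~1.3]{WM} yields that $R$ is an SP-domain. (Verifying condition~(4) of Lemma~\ref{SP} directly would also work, but invoking \cite{WM} is the cleanest route.) Finally, if $J(R)$ is principal then it is invertible, so $R$ is an SP-domain by what was just shown, and in particular $R$ is almost Dedekind, hence Pr\"ufer. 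Applying Lemma~\ref{jpow} with $n=1$ then shows that every invertible ideal of $R$ is principal, and since every nonzero finitely generated ideal of a Pr\"ufer domain is invertible, it follows that $R$ is a B\'ezout domain.

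I expect no genuine obstacle here; the only points needing care are the identification $\sqrt{xR}=J(R)$ in the one-dimensional setting and checking that \cite[Proposition~1.3]{WM} is applied in the right generality (to one-dimensional domains, not merely Pr\"ufer ones).
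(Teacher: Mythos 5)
Your proof is correct and follows essentially the same route as the paper: Lemma~\ref{SP} gives invertibility of $J(R)=\sqrt{xR}$ in the forward direction; the converse rests on the invertible radical ideal $J(R)$ lying in every maximal ideal (the paper re-derives almost Dedekindness and then applies Lemma~\ref{SP}(2), whereas you cite \cite[Proposition 1.3]{WM} directly, but that is the very ingredient the paper uses inside the proof of Lemma~\ref{SP}(4)$\Rightarrow$(1) and endorses for one-dimensional domains in the subsequent remark); and Lemma~\ref{jpow} with $n=1$ gives the B\'ezout conclusion. The only cosmetic difference is in the last step, where the paper routes through Theorem~\ref{BSPG} via radicals of principal ideals being principal, while you conclude directly from ``every invertible ideal is principal'' in a Pr\"ufer domain; both are valid.
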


\begin{proof} Since $J(R) \ne 0$, $R$ is not a field. If $R$ is an SP-domain, then by Lemma~\ref{SP} the radical of every nonzero principal ideal is invertible, so since $J:=J(R)$ is the radical of any nonzero ideal contained in it, $J$ is invertible.

Conversely, suppose  $R$ is one-dimensional and $J$ is invertible. For each maximal ideal $M$ of $R$, $JR_M$ is an invertible, hence principal, ideal of the local ring $R_M$. Thus, since $R_M$ is one-dimensional, $R_M$ is a DVR, which shows that $R$ is an almost Dedekind domain.
Since $J$ is contained in each maximal ideal of $R$ but not in the square of any maximal ideal,  Lemma~\ref{SP} implies
 $R$ is an SP-domain.

Finally, if $J$ is principal, then by Lemmas~\ref{SP} and~\ref{jpow}, $\sqrt{xR}$ is principal for every $x\in R$. Therefore, by Theorem~\ref{BSPG}, $R$ is a B\'ezout domain.
\end{proof}

\begin{rem} \label{pic}   {\em (1)  Let $R$ be an SP-domain with $J(R) \ne 0$. It follows from Lemma~\ref{jpow} and Theorem~\ref{BSPG3} that
 ${\rm{Pic}}(R)$ is bounded if only if
 ${\rm{Pic}}(R)$ is a torsion group, if and only if
 $J(R)^n$ is principal for some $n\in\mathbb{N}$. However, we do not know whether an SP-domain with nonzero Jacobson radical is always a B\'ezout domain; that is, we do not know whether an SP-domain with $J(R) \ne 0$ always has that  $J(R)$ is a  principal ideal.}
% It is clear that (1) implies (2), and that (2) implies (3) is also clear in light of
%Theorem~\ref{SPG}. That (3) implies (1)
% follows from Lemma~\ref{jpow}.

  {\em (2) Consider the following three properties for a one-dimensional domain $R$: (a)
$R$ is a B\'ezout SP-domain, (b)
$R$ is  radical factorial, and
(c)  every nonzero maximal ideal of $R$ contains a nonzero principal radical ideal of $R$.
It is known that (a) $\Rightarrow$ (b) $\Rightarrow$ (c); cf. \cite[Propositions 2.4(3) and 3.10(2)]{RE}. The
 converse implications do not  hold in general (even  when $R$ is a Dedekind domain and $\mbox{Pic}(R)$ is torsion-free \cite[Example 4.3]{RE}), but the counterexamples to the reverse implications have Jacobson radical $0$.
 We do
not know whether (a), (b) and (c) are equivalent when $R$ is a one-dimensional Pr\"ufer domain with nonzero Jacobson radical. }
 \end{rem}

%\begin{lem} \label{qr} If $R$ is a one-dimensional domain with nonzero Jacobson radical $J$, then for each finitely generated  ideal $I$ of $R$, there exists $x \in I$ such that  $\sqrt{I} = \sqrt{xR}$.
%\end{lem}

%\begin{proof}
% If $I \subseteq J$, then $\sqrt{I} = J = \sqrt{xR}$ for any $0 \ne x \in R$, so suppose that $I \not \subseteq J$.
 % Since $R/J$ is a von Neumann regular ring, the finitely generated ideal $(I+J)/J$ of $R/J$ is a principal ideal, and hence there exists $x \in I$ such that $I+ J  = xR +J$.  Using the fact that $R$ is a one-dimensional domain and $J$ is the Jacobson radical of $R$, we have $\sqrt{I} = I +J = xR + J = \sqrt{xR}$.
%\end{proof}

%\begin{rem} \label{spqr} {\em If $R$ is an SP-domain with nonzero Jacobson radical, then $R$ is a QR-domain.
% Since $R$ is a Pr\"ufer domain, this is immediate from Lemma~\ref{qr}.
%\end{proof}

%Observe that Corollary~\ref{SPG} is an immediate consequence of Proposition~\ref{SPG}.

\section{Topological aspects of  SP-domains}

In this section we extend ideas from \cite{WM, Olb} to
give an explicit function-theoretic  representation of $\Inv(R)$ and $\Div(R)$  for SP-domains  with nonzero Jacobson radical. We show in particular that these groups, and hence also the group $\Div(R)/\Inv(R)$, are topological invariants of the maximal spectrum $\Max(R)$ of $R$.  When $R$ is any one-dimensional domain (e.g. $R$ is an SP-domain) with nonzero Jacobson radical, then $\Max(R)$ is a {\it Boolean space} (or {\it Stone space}), that is, a compact Hausdorff space having an open basis consisting of  clopen ($=$ closed and open) subsets \cite[p.~198]{Joh}. By Stone Duality, the Boolean spaces are precisely the topological spaces that occur as the space of ultrafilters of a Boolean algebra; see for example \cite{Joh,KO}. Specifically, every Boolean space is homeomorphic to the space of ultrafilters on the Boolean algebra of its clopen sets.

We associate to each Boolean space $X$ the $\ell$-group
 $C(X,{\mathbb{Z}})$ consisting of the  continuous functions
from $X$ to $\mathbb{Z}$. (This group  is the {\it Boolean power} of ${\mathbb{Z}}$ over $X$; see~\cite{BMMO, Rib} for more on this point of view.)
 Specifically,  $(C(X,\mathbb{Z}), +)$ is an
abelian group with respect to pointwise addition; that is, for
each $f,g\in C(X,\mathbb{Z})$, $(f+g)(x)=f(x)+g(x)$ for all
$x\in X$. The group $C(X,\mathbb{Z})$ is lattice-ordered with
respect to the pointwise ordering given by $f\leq g$ if and only if $f(x)\leq
g(x)$  for all $x\in X$. Join and meet are defined, respectively, by $(f\vee
g)(x)= \max\{f(x), g(x)\}$ for all $x\in X$ and $(f\wedge g)(x)=
\min\{f(x), g(x)\}$ for all $x\in X$. Since $X$ is compact, it is straightforward to see that every function $f \in C(X,{\mathbb{Z}})$ may be represented as $f = a_1\chi_{A_i} + \cdots + a_n\chi_{A_n}$, where each $a_i \in {\mathbb{Z}}$, $\{A_1,A_2,\ldots,A_n\}$ is a partition of $X$ into clopens $A_i$, and
$\chi_{A_i}$ represents the characteristic function of $A_i$ (see for example \cite[Lemma 2.1]{BMMO} or \cite[Lemma 3.3]{Olb}).

In \cite[Section 3]{Olb}, it is shown that for each Boolean space $X$, there is a B\'ezout SP-domain with nonzero Jacobson radical whose group of divisibility is isomorphic as an $\ell$-group to $C(X,{\mathbb{Z}})$. McGovern \cite[p.~1781]{WM} has shown that this fact characterizes such rings. Namely,    a B\'ezout domain $R$ is an SP-domain with $J(R) \ne 0$  if and only if its group of divisibility ${\rm Prin}(R)$ is isomorphic as an $\ell$-group to $C(X,{\mathbb{Z}})$ for some Boolean space $X$.  In the next theorem we extend this result to the group of invertible fractional ideals of a Pr\"ufer domain.   Implicit in the proof is an argument that a Pr\"ufer domain $R$ is an SP-domain if and only if the Nagata function ring $R(T)$ of $R$ is an SP-domain.

%The next theorem shows that whether a Pr\"ufer domain
%$R$ is an SP-domain with nonzero Jacobson radical depends only the $\ell$-group ${\rm Inv}(R)$.

\begin{thm}\label{characterization} A Pr\"ufer domain $R$ is an SP-domain with $J(R) \ne 0$
if and only if there is a Boolean space $X$ (necessarily homeomorphic to $\Max(R)$) such that ${\rm Inv}(R) \cong C(X,\mathbb{Z})$ as $\ell$-groups.
\end{thm}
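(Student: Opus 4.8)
The plan is to reduce to McGovern's characterization of B\'ezout SP-domains, quoted above, by passing to the Nagata function ring $S:=R(T)$. Two inputs drive the argument. First, by Lemma~\ref{invertible} the extension map $I\mapsto IS$ is an $\ell$-group isomorphism ${\rm Inv}(R)\to{\rm Prin}(S)$, so ${\rm Inv}(R)\cong C(X,\mathbb{Z})$ for a Boolean space $X$ if and only if the group of divisibility ${\rm Prin}(S)$ is $\ell$-isomorphic to $C(X,\mathbb{Z})$; by McGovern's theorem the latter holds for some Boolean $X$ precisely when $S$ is an SP-domain with $J(S)\ne 0$. Second, I will prove that $R$ is an SP-domain with $J(R)\ne 0$ if and only if $S$ is. Granting this, the two equivalences compose to give the biconditional, and it remains to identify the space $X$ with $\Max(R)$.

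For the second input I will use the standard behavior of the Nagata ring of a Pr\"ufer domain: $S$ is a B\'ezout domain \cite[Theorem 33.4]{Gil}; contraction and extension are mutually inverse order isomorphisms ${\rm Spec}(R)\cong{\rm Spec}(S)$ restricting to a homeomorphism $\Max(R)\cong\Max(S)$ via $M\mapsto MS$, so that $\dim S=\dim R$; and $J(S)=J(R)S$. The inclusion $J(R)S\subseteq J(S)$ is immediate since $\Max(S)=\{MS:M\in\Max(R)\}$; for the reverse, an element of $J(S)$ has the form $f/g$ with $f,g\in R[T]$, $c(g)=R$ and $g$ a unit of $S$, so $f\in J(S)\cap R[T]=\bigcap_M(MS\cap R[T])=\bigcap_M MR[T]=J(R)[T]$, whence $f/g\in J(R)S$. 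In particular $J(S)\ne 0\iff J(R)\ne 0$. Now if $R$ is an SP-domain with $J(R)\ne 0$, then $R$ is one-dimensional and Pr\"ufer, so $S$ is a one-dimensional B\'ezout domain with $J(S)=J(R)S\ne 0$ (hence principal), and Theorem~\ref{SPG3} shows $S$ is an SP-domain. Conversely, if $S$ is an SP-domain with $J(S)\ne 0$, then $J(R)\ne 0$ and $\dim R=\dim S=1$; moreover $J(R)S=J(S)$ is principal, hence finitely generated over $S$, and since $R\hookrightarrow S$ is faithfully flat, finite generation descends (if $N\subseteq J(R)$ is the submodule spanned by $R$-components of a finite generating set of $J(R)S$, then $N\otimes_R S=J(R)\otimes_R S$, so the module $J(R)/N$ vanishes), so $J(R)$ is finitely generated and therefore invertible as $R$ is Pr\"ufer; Theorem~\ref{SPG3} then gives that $R$ is an SP-domain.

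Finally, to see that $X$ is necessarily homeomorphic to $\Max(R)$: since $\Max(S)\cong\Max(R)$, it suffices to note that in McGovern's theorem, and in the construction of \cite[Section~3]{Olb} on which it rests, the Boolean space attached to a group of divisibility of the form $C(X,\mathbb{Z})$ may be recovered as the maximal spectrum of the ring, together with the fact that an $\ell$-group isomorphism $C(X,\mathbb{Z})\cong C(Y,\mathbb{Z})$ forces $X\cong Y$ (the clopen algebra of $X$ is the Boolean algebra of complemented elements of the order interval $[0,u]$ for $u$ the least strong order unit, and $X$ is its Stone space). Should a self-contained route be preferred, one can instead verify directly that for an SP-domain $R$ with $J(R)\ne 0$ the assignment $I\mapsto(M\mapsto v_M(I))$, with $v_M$ the valuation of the DVR $R_M$, is an $\ell$-isomorphism ${\rm Inv}(R)\to C(\Max(R),\mathbb{Z})$: it lands in $C(\Max(R),\mathbb{Z})$ because, writing $I$ via Lemma~\ref{SP} as a finite product of invertible radical ideals with integer exponents and observing that $\{M\in\Max(R):L\subseteq M\}$ is clopen for each invertible radical ideal $L$, the function $M\mapsto v_M(I)$ is locally constant; it is injective because an invertible ideal of a Pr\"ufer domain is determined by its localizations; and it is surjective because every element of $C(\Max(R),\mathbb{Z})$ is a finite $\mathbb{Z}$-linear combination of characteristic functions of clopen sets. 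I expect this last topological bookkeeping — in particular the clopenness of $\{M:L\subseteq M\}$ and the fact that such $L$ generate ${\rm Inv}(R)$ — rather than the ring-theoretic steps, to be the fussiest part of a full write-up.
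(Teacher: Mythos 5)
Your proposal is correct, and its skeleton is the same as the paper's: pass to the Nagata ring $S=R(T)$, use Lemma~\ref{invertible} to identify ${\rm Inv}(R)$ with ${\rm Prin}(S)$, and invoke McGovern's characterization of B\'ezout SP-domains. Where you genuinely diverge is in how the SP property is transferred between $R$ and $S$. The paper works with the criterion of Lemma~\ref{SP}(2), checking directly that finitely generated ideals not contained in the square of any maximal ideal extend and contract correctly along the faithfully flat map $R\subseteq R(T)$; you instead prove the sharper identity $J(S)=J(R)S$ (the paper only records $J(R(T))\cap R=J(R)$) and then run everything through Theorem~\ref{SPG3}: invertibility of $J(R)$ pushes forward to give $J(S)$ invertible (principal, as $S$ is B\'ezout), and in the converse direction you descend finite generation of $J(R)S$ to $J(R)$ by a clean faithfully-flat argument. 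This buys a more conceptual transfer at the cost of relying on Theorem~\ref{SPG3} and on the content-formula computation behind $MR(T)\cap R[T]=MR[T]$, whereas the paper's route is more elementary but more hands-on. Your identification of $X$ with $\Max(R)$ via the Boolean algebra $[0,\chi_X]$ and Stone duality is also a legitimate alternative to the paper's appeal to prime $\ell$-subgroups and \cite[Lemma 5.7]{Lel}. One small caveat on your optional ``self-contained'' route: surjectivity of $I\mapsto(M\mapsto v_M(I))$ needs more than the decomposition of continuous functions into combinations of characteristic functions of clopens --- you must also realize each such characteristic function, i.e., show every clopen of $\Max(R)$ has the form $\{M: \sqrt{A}\subseteq M\}$ for some finitely generated $A$ (this uses $J(R)\ne 0$ and Lemma~\ref{connection}(4)); you flag this as the fussy part, and it is, but it is not a gap in your main argument.
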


\begin{proof} Since the maximal ideals of the Nagata function ring $R(T)$ are of the form $MR(T)$ with $M$ a maximal ideal of $R$ and $MR(T)\cap R=M$, it follows that $J(R(T))\cap R=J(R)$. Every ideal of $R(T)$ is extended from $R$ \cite[p.~174]{FL}, and thus $J(R(T)) = 0$ if and only if $J(R) = 0$.

Suppose that $R$ is an SP-domain with $J(R) \ne 0$. Since $R$ is an SP-domain, $R$ is almost Dedekind and so is $R(T)$ \cite[Proposition 36.7]{Gil}. We show that $R(T)$ is an SP-domain via Lemma~\ref{SP}(2).
Let $M$ be a maximal ideal of $R$.  Then there exists a finitely generated ideal $I$ contained in $M$ such that $I$ is not contained in the square of any maximal ideal of $R$.
 If $IR(T)$ is contained in $N^2R(T)$ for some maximal ideal $N$ of $R$, then $I \subseteq N^2R(T) \cap R = N^2$, where the last equality holds since $R(T)$ is a faithfully flat extension of $R$. However, $I$ was chosen not in the square of any maximal ideal of $R$, so this contradiction
 implies by Lemma~\ref{SP} that $R(T)$ is an SP-domain.
 Now $R(T)$ is a B\'{e}zout SP-domain with $J(R(T)) \ne 0$, so the group of divisibility $\mbox{Prin}(R(T))$ of $R(T)$ is isomorphic as an $\ell$-group to $C(X,\mathbb{Z})$ for $X = \Max(R)$ \cite[p.~1781]{WM}. By Lemma \ref{invertible}, $\mbox{Inv}(R)$ is $\ell$-isomorphic to $C(X,\mathbb{Z})$.

 Conversely, suppose that ${\rm Inv}(R)$ is $\ell$-isomorphic to $C(X,{\mathbb{Z}})$
 for some Boolean space $X$. By Lemma \ref{invertible}, $\mbox{Prin}(R(T))$ is $\ell$-isomorphic to $C(X,\mathbb{Z})$. Hence $R(T)$ is an SP-domain with $J(R(T)) \ne 0$ \cite[p.~1781]{WM}.  Thus $J(R) \ne 0$ by the above observation that $J(R(T)) = 0$ if and only if $J(R) = 0$. Since $R(T)$ is almost Dedekind,  $R$ is almost Dedekind \cite[Proposition 36.7]{Gil}.  To see that $R$ is an SP-domain, let $M$ be a maximal ideal of $R$. By Lemma~\ref{SP} there exists a finitely generated ideal $B$ of $R(T)$ such that $B \subseteq MR(T)$ and $B$ is not contained in the square of any maximal ideal of $R(T)$.  Now, since $R$ is a Pr\"ufer domain, $B = AR(T)$ for some finitely generated ideal $A$ of $R$ \cite[p.~174]{FL}, so $A$ is contained in $M$ but not in the square of any maximal ideal of $R$. Therefore, by Lemma~\ref{SP}, $R$ is an SP-domain.

 Finally, it is routine to check that every prime $\ell$-subgroup of $C(X,{\mathbb{Z}})$ is both minimal and maximal, and that $X$ is homeomorphic to the space of prime subgroups of $C(X,{\mathbb{Z}})$; see for example
  \cite[Section 3]{Olb} (the group $G$ there is
  $C(X,{\mathbb{Z}})$).  Since the space of minimal prime $\ell$-subgroups of the group of divisibility of a B\'{e}zout domain is homeomorphic to the maximal spectrum of the domain \cite[Lemma 5.7]{Lel}, we conclude that $X$ is homeomorphic to $\Max(R(T))$, and hence by \cite[(a),  p.~174]{FL} homeomorphic to $\Max(R)$.
 \end{proof}

Thus the group of invertible ideals of an SP-domain $R$ with nonzero Jacobson radical, and hence much of the ideal theory of $R$,   depends solely on the topological space $\Max(R)$.  In \cite[Section 4]{Olb} a number of examples are given to show how the topology of $\Max(R)$ influences the SP-domain $R$.

Continuing in this line, we show next that $\Div(R)$ is also a topological invariant of the space $\Max(R)$.  For this we recall the notion of the Gleason cover of a compact Hausdorff space.
A topological  space $X$ is \textit{extremally disconnected} if the closure of any open subset of $X$ is clopen; i.e., the regular open subsets coincide with the clopen subsets of $X$.
Gleason \cite{GLE} has shown that the extremally disconnected Boolean spaces are the projective objects in the category of compact Hausdorff spaces. In particular,
 for any compact Hausdorff space $X$, there is a continuous irreducible  (i.e., sends proper closed sets to proper closed sets) surjection $j: EX \rightarrow X$, where $EX$ is an extremally disconnected Boolean space. This property characterizes $EX$ up to homeomorphism in the category of compact Hausdorff spaces $X$ \cite[Theorem 3.2]{GLE}. The space $EX$ is called the \textit{Gleason cover} of $X$.

When $X$ is a Boolean space, then $C(EX,\mathbb{Z})$ is a complete $\ell$-group \cite[Proposition 3.29]{KM}. Moreover,
 the map $\psi : C(X,\mathbb{Z})\rightarrow C(EX,\mathbb{Z})$
defined by $\psi(f)=f\circ j$ for each $f \in C(X,{\mathbb{Z}})$ is an $\ell$-group
homomorphism that is injective  since $j$ is surjective. We show  in the next lemma that under this embedding $C(EX,\mathbb{Z})$ is the completion of $C(X,{\mathbb{Z}})$.  %$\psi$ is injective.
%Note that as $X$ is a compact Hausdorff space,
%$C(X,\mathbb{Z})$ is precisely the set of bounded continuous
%function from $X$ to $\mathbb{Z}$.  It follows that $C(X,\mathbb{Z})$ is
%archimedean \cite[Example 1.6]{WM} and therefore has a completion in the sense of
%Lemma \ref{completion}.

\begin{lem}\label{completion-Boolean}
If $X$ is a Boolean space, then
the mapping $\psi:C(X,\mathbb{Z})\rightarrow C(EX,\mathbb{Z})$ is a dense embedding of $\ell$-groups and $C(EX,\mathbb{Z})$ is the completion of the image of $C(X,{\mathbb{Z}})$.

\end{lem}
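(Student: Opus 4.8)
The plan is to verify the two properties (a) and (b) from \cite[Theorem 2.4]{CA} characterizing the completion, namely that $C(EX,\mathbb{Z})$ is a complete $\ell$-group and that $\psi$ is a dense embedding. Completeness is already supplied by \cite[Proposition 3.29]{KM}, and injectivity of $\psi$ follows because $j$ is surjective (as noted right before the lemma), so the real content is the \emph{density} of the embedding: given $0 < h \in C(EX,\mathbb{Z})$, I must produce $f \in C(X,\mathbb{Z})$ with $0 < \psi(f) = f\circ j \leq h$.

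First I would reduce to a clopen characteristic function on $EX$. Since $h > 0$, the set $U = \{p \in EX : h(p) \geq 1\}$ is a nonempty clopen subset of $EX$, and $\chi_U \leq h$ pointwise with $\chi_U > 0$. So it suffices to find a nonempty clopen $V \subseteq X$ such that $j^{-1}(V) \subseteq U$; then $f := \chi_V$ works, because $f \circ j = \chi_{j^{-1}(V)} \leq \chi_U \leq h$ and $f\circ j > 0$ since $j^{-1}(V)$ is nonempty ($j$ being surjective). The existence of such a $V$ is where the irreducibility of the Gleason map $j$ enters: the complement $EX \smallsetminus U$ is a proper closed subset of $EX$, so its image $j(EX \smallsetminus U)$ is a proper closed subset of $X$ by irreducibility of $j$. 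Hence $W := X \smallsetminus j(EX \smallsetminus U)$ is a nonempty open subset of $X$, and by definition $j^{-1}(W) \subseteq U$. Since $X$ is a Boolean space, $W$ contains a nonempty clopen subset $V$, and then $j^{-1}(V) \subseteq j^{-1}(W) \subseteq U$, as desired.

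Finally I would assemble the pieces: $\psi$ is an injective $\ell$-group homomorphism (already observed), $C(EX,\mathbb{Z})$ is complete \cite[Proposition 3.29]{KM}, and the density just established shows that for each $0 < h \in C(EX,\mathbb{Z})$ there is $g \in \psi(C(X,\mathbb{Z}))$ with $0 < g \leq h$; taking $g_1 = g$ and noting that the constant function with value $\max_{EX} h$ (which is finite since $EX$ is compact) lies in $\psi(C(X,\mathbb{Z}))$ and dominates $h$ provides the upper bound $g_2$ required in condition (b). Therefore $C(EX,\mathbb{Z})$ satisfies the characterizing properties of the completion of $\psi(C(X,\mathbb{Z}))$. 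The main obstacle is the density step, and within it the key move is recognizing that irreducibility of $j$ is exactly the tool that converts "clopen upstairs" into "clopen downstairs"; the rest is bookkeeping with the characterization of completions and the compactness of $EX$.
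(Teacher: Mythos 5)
Your proof is correct, and the core density step is handled by a genuinely more direct argument than the one in the paper. Both proofs reduce, via \cite[Theorem 2.4]{CA}, to producing for each nonempty clopen $B\subseteq EX$ a nonempty clopen $A\subseteq X$ with $j^{-1}(A)\subseteq B$ (your reduction takes $B=h^{-1}(\{n:n\geq 1\})$ in one stroke; the paper decomposes $f$ as a finite sum $\sum_i n_i\chi_{B_i}$ over a clopen partition and selects one $B_i$ --- a cosmetic difference). Where you diverge is in how irreducibility of $j$ is used. The paper first proves the auxiliary identity $B=\iz j^{-1}(j(B))$, which requires knowing that irreducible maps send regular closed sets to regular closed sets and that the clopen preimage of a regular closed set under an irreducible map is unique, citing \cite[Theorem 6.5(d)]{PW}, and then invokes \cite[Lemma 6.5(b)]{PW} to see that $\iz j(B)$ is nonempty before choosing $A\subseteq \iz j(B)$. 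You instead apply the bare definition of irreducibility to the complement: $EX\smallsetminus B$ is a proper closed set, so $j(EX\smallsetminus B)$ is a proper closed subset of $X$, and $W=X\smallsetminus j(EX\smallsetminus B)$ is a nonempty open set satisfying $j^{-1}(W)\subseteq B$ by pure set-theoretic bookkeeping; any nonempty clopen $A\subseteq W$ then works. This bypasses the regular-closed-set machinery entirely and is self-contained modulo the definition of an irreducible surjection (plus the standard fact that continuous images of closed sets are closed here, by compactness). What the paper's route buys is the explicit identity $B=\iz j^{-1}(j(B))$, which is reused later in the proof of Theorem~\ref{torsion-free}; your route buys brevity and fewer external citations. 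The remaining pieces --- injectivity of $\psi$ from surjectivity of $j$, completeness of $C(EX,\mathbb{Z})$ from \cite[Proposition 3.29]{KM}, and the upper bound $g_2$ given by the pullback of a constant function bounding $\max h$ --- match the paper's proof.
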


\begin{proof}  As noted before the lemma, $C(EX,{\mathbb{Z}})$ is a complete $\ell$-group. By \cite[Theorem 2.4]{CA}, it suffices to show that for each $0 < f \in C(EX,{\mathbb{Z}})$, there exist $g_1,g_2 \in C(X,{\mathbb{Z}})$ such that $0 < \psi(g_1) \leq f \leq \psi(g_2)$.   Let $0 < f \in C(EX,{\mathbb{Z}})$. As discussed at the beginning of the section, there exist $n_1,\ldots,n_l\in\mathbb{N}$ and nonempty disjoint clopen sets $B_1,\ldots,B_l$ of $EX$ such that $f=\sum_{i=1}^ln_i\chi_{B_i}$.  Let $n = \max\{n_1,\ldots,n_l\}$, and let $g_2 = n \chi_X$. Then $f \leq \psi(g_2) = n\chi_{EX}$. Thus it remains to show there exists $g_1 \in C(X,{\mathbb{Z}})$ such that $0 < g_1 \leq f$.

Since $f>0$ and the $B_i$ are disjoint, there is $1\leq i\leq l$ such that $n_i\chi_{B_i}>0$.  Thus to complete the proof it suffices to show that if $B$ is a  nonempty clopen subset of $EX$, there exists a nonempty clopen subset $A$ of $X$ such that $0 < \psi(\chi_A) \leq \chi_B$. Let $j$ be the continuous irreducible surjection $EX \rightarrow X$.  Since $\psi(\chi_A) = \chi_{j^{-1}(A)}$,
%to prove that
%$\psi$ is dense,
it suffices to show there is a nonempty clopen subset $A$ of $X$ such that $j^{-1}(A) \subseteq B$. Before proving this, we show
 that $B = \iz  j^{-1}(j(B))$.

Since $B \subseteq j^{-1}(j(B))$ and $B$ is open in $EX$, we have $B \subseteq \iz j^{-1}(j(B))$. Hence $j(B) \subseteq j(\iz j^{-1}(j(B))) \subseteq j(j^{-1}(j(B))) = j(B)$.
%where the last equality follows from the fact that
%since $j$ is surjective, $C = j(j^{-1}(C))$ for all subsets $C$ of $X$.
  Thus $j(B) = j(\iz j^{-1}(j(B))).$
 %\subseteq \iz j^{-1}(j(B))$, where the last containment follows from the fact that $j$ is continuous.
%Using the fact that $j$ is continuous, we have $j^{-1}(\iz j(B)) \subseteq \iz j^{-1}(j(B))$.  Also, since $j$ is surjective, $C = j(j^{-1}(C))$ for all subsets $C$ of $X$.
%Thus
%$$\iz j(B) = j(j^{-1}(\iz j(B))) \subseteq j( \iz j^{-1}(j(B))) \subseteq j(j^{-1}(j(B)))= j(B).$$
Now, since $j$ is irreducible and $B$ is regular closed (in fact, clopen), $j(B)$ is a regular closed subset of $X$ \cite[Theorem 6.5(d), pp.~454--455]{PW}.  %Therefore, $\overline{\iz j(B)} = j(B)$.  However,
Thus $j^{-1}(j(B))$ is closed in $EX$, so that  $\iz j^{-1}(j(B))$ is a regular open, hence clopen, subset of $EX$. Therefore, $j$ maps both of the clopen sets $B$ and $\iz j^{-1}(j(B))$ onto $j(B)$.  Since $j$ is irreducible, there exists a unique clopen of $EX$ mapping onto the regular closed set $j(B)$ \cite[Theorem 6.5(d), pp.~454--455]{PW}. Consequently, $B = \iz  j^{-1}(j(B))$, which proves the claim.

Finally,
since $B$ is clopen in $EX$ and $j$ is irreducible, $\iz j(B)$ is nonempty \cite[Lemma 6.5(b), p.~452]{PW}.  Since $X$ has a basis of clopens, there exists a nonempty clopen $A$ in $\iz j(B)$.  Since  $A \subseteq j(B)$ and $A$ is open, we have   $j^{-1}(A) \subseteq \iz j^{-1}(j(B)) = B$, which completes the proof.
\end{proof}

\begin{thm} \label{EX cor}  Let $R$ be an $SP$-domain with $J(R) \ne 0$, and let $X = \Max(R)$. Then there is a commutative diagram,
\smallskip

\begin{center}
$\begin{CD}
\Inv(R)  @>{\subseteq}>> \Div(R) \\
@VV{\alpha}V        @VV{\beta}V\\
C(X,{\mathbb{Z}})    @>\psi>>  C(EX,{\mathbb{Z}}),
\end{CD}$
\end{center}
\smallskip
where the vertical maps are isomorphisms.
\end{thm}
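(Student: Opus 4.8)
The plan is to assemble the diagram from pieces already in hand. Theorem~\ref{characterization} supplies an $\ell$-isomorphism $\alpha:\Inv(R)\to C(X,\mathbb{Z})$ with $X=\Max(R)$. For the right-hand vertical map, I would first observe that by Proposition~\ref{completion of prufer}, $\Div(R)$ is the completion of $\Inv(R)$, and by Lemma~\ref{completion-Boolean}, $C(EX,\mathbb{Z})$ is the completion of $C(X,\mathbb{Z})$. Since $\alpha$ is an $\ell$-isomorphism, it carries the completion of $\Inv(R)$ isomorphically onto the completion of $C(X,\mathbb{Z})$; uniqueness of the completion (up to the unique $\ell$-isomorphism fixing the dense subgroup, by \cite[Theorem 2.4]{CA}) then yields a unique $\ell$-isomorphism $\beta:\Div(R)\to C(EX,\mathbb{Z})$ extending $\psi\circ\alpha$ along the two dense embeddings $\Inv(R)\hookrightarrow\Div(R)$ and $\psi:C(X,\mathbb{Z})\hookrightarrow C(EX,\mathbb{Z})$. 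Commutativity of the square, $\beta|_{\Inv(R)}=\psi\circ\alpha$, is then exactly the statement that $\beta$ extends $\psi\circ\alpha$, which holds by construction.

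In slightly more detail, the key steps in order are: (1) invoke Theorem~\ref{characterization} to fix $\alpha$ and identify $X$ with $\Max(R)$; (2) record from Proposition~\ref{completion of prufer} that $\Inv(R)\subseteq\Div(R)$ realizes $\Div(R)$ as the completion of $\Inv(R)$, and from Lemma~\ref{completion-Boolean} that $\psi:C(X,\mathbb{Z})\hookrightarrow C(EX,\mathbb{Z})$ realizes $C(EX,\mathbb{Z})$ as the completion of the image of $C(X,\mathbb{Z})$; (3) transport the completion along $\alpha$: since $\alpha$ is an $\ell$-isomorphism of $\Inv(R)$ onto $C(X,\mathbb{Z})$, it sends the complete $\ell$-group $\Div(R)$ containing $\Inv(R)$ densely to a complete $\ell$-group containing $C(X,\mathbb{Z})$ densely, and by the characterization of the completion in \cite[Theorem 2.4]{CA} (property (b): sandwiching each positive element between two elements of the dense subgroup is preserved by any $\ell$-isomorphism), this complete $\ell$-group is the completion of $C(X,\mathbb{Z})$; (4) apply the uniqueness clause of \cite[Theorem 2.4]{CA} to get the unique $\ell$-isomorphism $\beta$ between the two completions with $\beta|_{\Inv(R)}=\psi\circ\alpha$. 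That last property is exactly the commutativity of the diagram, and both vertical maps are isomorphisms by construction.

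The one point that needs a little care—and what I would flag as the main obstacle—is the bookkeeping in step (3)–(4): one must check that "the completion" is functorial enough that an $\ell$-isomorphism on dense subgroups extends uniquely to the completions. This is not automatic from "smallest complete $\ell$-group containing $G$" alone, but it does follow from the explicit characterization recalled before Proposition~\ref{completion of prufer}, since an $\ell$-isomorphism preserves the order, meets, joins, and the sandwiching property (b), hence carries a completion to a completion; and two completions of the same $\ell$-group are $\ell$-isomorphic via a map fixing that group, whence the extension $\beta$ of $\psi\circ\alpha$ exists and is unique. Once this is in place the proof is essentially a diagram chase, with no further ring theory required beyond what Theorem~\ref{characterization} already provides.
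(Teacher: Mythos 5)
Your proposal is correct and follows essentially the same route as the paper: obtain $\alpha$ from Theorem~\ref{characterization}, identify $\Div(R)$ as the completion of $\Inv(R)$ (Proposition~\ref{completely integrally prufer}) and $C(EX,\mathbb{Z})$ as the completion of $C(X,\mathbb{Z})$ (Lemma~\ref{completion-Boolean}), and lift $\alpha$ to an isomorphism $\beta$ of the completions. The only cosmetic difference is that the paper delegates the lifting step to \cite[Theorem 1.1]{CA}, whereas you re-derive it from the uniqueness clause of \cite[Theorem 2.4]{CA}; both are valid.
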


\begin{proof}
By Theorem~\ref{characterization}, there is an isomorphism $\alpha: \Inv(R) \rightarrow C(X,{\mathbb{Z}})$, and by  Lemma \ref{completion-Boolean} the mapping $\psi:C(X,{\mathbb{Z}}) \rightarrow C(EX,{\mathbb{Z}})$ is a dense embedding, with $C(EX,{\mathbb{Z}})$ a complete $\ell$-group. By Proposition \ref{completely integrally prufer}, $\mbox{Div}(R)$ is the completion of $\mbox{Inv}(R)$, so
the mapping $\alpha$ lifts to a (unique) isomorphism $\beta:\Div(R) \rightarrow C(EX,{\mathbb{Z}})$ \cite[Theorem 1.1]{CA}.
\end{proof}

Combining the theorem with Theorem~\ref{dense subring}, we have the following ring-theoretic analogue of Lemma~\ref{completion-Boolean}.

\begin{cor}
An SP-domain $R$ with $J(R) \ne 0$ is
 a dense subring of
 a pseudo-Dedekind B\'ezout SP-domain $S$  with $J(S) \ne 0$.
\end{cor}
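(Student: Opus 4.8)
The plan is to assemble the corollary directly from the two major structural results already proved: Theorem~\ref{dense subring}, which embeds a completely integrally closed Pr\"ufer domain $R$ densely into a pseudo-Dedekind B\'ezout domain $S$ with $\Inv(S)\cong\Div(R)$, and Theorem~\ref{EX cor} together with Lemma~\ref{completion-Boolean}, which identify $\Div(R)$ with $C(EX,\mathbb Z)$ when $R$ is an SP-domain with nonzero Jacobson radical. So the first step is to observe that an SP-domain with $J(R)\ne 0$ is, by Lemma~\ref{SP}, a one-dimensional Pr\"ufer domain, and it is completely integrally closed: indeed for any proper finitely generated (hence invertible, hence, by one-dimensionality, $M$-primary for finitely many $M$) ideal $I$, we have $\bigcap_{n>0}I^n=0$ because this can be checked locally at each maximal ideal, where $R_M$ is a DVR. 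Hence Theorem~\ref{dense subring} applies and yields a pseudo-Dedekind B\'ezout domain $S\supseteq R$, faithfully flat and dense, with $\Inv(S)\cong\Div(R)$ as $\ell$-groups.

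The second step is to show that this $S$ is again an SP-domain with nonzero Jacobson radical, and here is where the one real piece of work lies. For the Jacobson radical: by Theorem~\ref{EX cor}, $\Div(R)\cong C(EX,\mathbb Z)$ where $X=\Max(R)$ is a Boolean space, so $\Inv(S)\cong C(EX,\mathbb Z)$. Since $EX$ is a Boolean space, $C(EX,\mathbb Z)$ has a smallest positive element only relative to each clopen, but more to the point the constant function $\chi_{EX}$ is a positive element lying below... — rather, the cleaner route is: an element of $\Inv(S)$ is a ``Jacobson-radical'' witness precisely when the corresponding function in $C(EX,\mathbb Z)$ is bounded away from $0$, and $\chi_{EX}$ is such. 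Translating back, $\chi_{EX}$ corresponds to a proper invertible ideal of $S$ contained in every maximal ideal of $S$ (one uses that $\Max(S)$ is homeomorphic to the space of minimal prime $\ell$-subgroups of $\Inv(S)=\Prin(S)$, as in the proof of Theorem~\ref{characterization}, and $EX$ is that space since $C(EX,\mathbb Z)$'s prime subgroups are parametrized by $EX$). Hence $J(S)\ne 0$. Then apply Theorem~\ref{characterization} (or Theorem~\ref{SPG3}): $S$ is a B\'ezout Pr\"ufer domain with $\Inv(S)\cong C(EX,\mathbb Z)$ and $J(S)\ne 0$, so by the converse direction of Theorem~\ref{characterization}, $S$ is an SP-domain.

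Alternatively, and perhaps more transparently, one can argue $J(S)\ne 0$ directly from density: if $0\ne a\in R$ lies in $J(R)$ (such $a$ exists since $J(R)\ne 0$), then $aS$ is a nonzero ideal of $S$; one must check $aS\subseteq N$ for every maximal ideal $N$ of $S$. Since $R\subseteq S$ is dense and faithfully flat, every maximal ideal $N$ of $S$ contracts to a maximal ideal $N\cap R=M$ of $R$ (one-dimensionality plus faithful flatness forces the contraction to be maximal: $N\cap R$ is prime and nonzero because $S$ is integral-like over... — more carefully, if $N\cap R=(0)$ then $S_N$ would contain $R$ in its quotient field, but $S_N$ is a valuation domain of $L$ and density forces $N\cap R\ne(0)$, hence maximal as $R$ is one-dimensional), and then $a\in J(R)\subseteq M\subseteq N$. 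So $aS\subseteq J(S)$, giving $J(S)\ne 0$. Combined with $S$ being a one-dimensional B\'ezout domain (pseudo-Dedekind B\'ezout domains of dimension one are one-dimensional), Theorem~\ref{SPG3} makes $S$ an SP-domain immediately.

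**The main obstacle** I anticipate is the verification that $\dim S=1$ and that $J(S)\ne 0$ — i.e., controlling the maximal spectrum of $S$ under the density hypothesis. The cleanest fix is to route everything through $\Inv(S)\cong C(EX,\mathbb Z)$ and invoke Theorem~\ref{characterization}'s converse, which already packages ``$\Inv\cong C(Y,\mathbb Z)$ for a Boolean space $Y$'' into ``SP-domain with nonzero Jacobson radical'', with $Y\cong\Max(S)$; this sidesteps any hands-on analysis of primes in $S$. I would write the proof in that form: $R$ SP with $J(R)\ne0$ $\Rightarrow$ $R$ completely integrally closed Pr\"ufer $\Rightarrow$ (Theorem~\ref{dense subring}) dense in pseudo-Dedekind B\'ezout $S$ with $\Inv(S)\cong\Div(R)$ $\cong$ (Theorem~\ref{EX cor}) $C(EX,\mathbb Z)$ $\Rightarrow$ (Theorem~\ref{characterization}) $S$ is an SP-domain with $J(S)\ne 0$ and $\Max(S)\cong EX$.
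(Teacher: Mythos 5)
Your proposal is correct and follows essentially the same route as the paper: apply Theorem~\ref{dense subring} to get the dense pseudo-Dedekind B\'ezout extension $S$ with $\Inv(S)\cong\Div(R)$, identify $\Div(R)$ with $C(EX,\mathbb{Z})$ via Theorem~\ref{EX cor}, and conclude that $S$ is an SP-domain with $J(S)\ne 0$ from the characterization of such B\'ezout domains by their group of divisibility (the paper cites McGovern's result directly, you cite the converse of Theorem~\ref{characterization}, which is its generalization --- the same argument in substance). Your explicit check that an SP-domain is completely integrally closed, needed to invoke Theorem~\ref{dense subring}, is a welcome addition that the paper leaves implicit; the alternative hands-on argument for $J(S)\ne 0$ is unnecessary given the cleaner route you correctly settle on.
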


\begin{proof}
 By Theorem~\ref{dense subring},  $R$ is a dense subring of
 a pseudo-Dedekind B\'ezout domain $S$ with
  ${\rm Inv}(S) \cong {\rm Div}(R)$ as $\ell$-groups.  By Theorem~\ref{EX cor}, ${\rm Inv}(S)$ is isomorphic as an $\ell$-group to $C(EX,{\mathbb{Z}})$, where $X = \Max(R)$. Thus the result of McGovern \cite[p.~1781]{WM} discussed before Theorem~\ref{characterization} shows that $S$ is an SP-domain with $J(S) \ne 0$.
%(4) $\Rightarrow$ (1)  Since $S$ is   faithfully flat over $R$,  the Going Down property holds for the extension $R \subseteq S$ and every maximal ideal of $R$ has a maximal ideal of $S$ lying over it. Thus since $S$ is    a one-dimensional domain, so is $R$. Therefore, to show that $R$ is an SP-domain, it suffices to verify Lemma~\ref{SP}(2). Let $M$ be a maximal ideal. Let $N$ be a maximal ideal of $S$ lying over $M$. Then by Lemma~\ref{SP} applied to $S$ there exists a finitely generated ideal $J$ of $S$ such that $J \subseteq N$ and $J$ is not contained in the square of any maximal ideal of $S$.  Since $R$ is dense in $S$, there exists a proper finitely generated ideal $I$ of $R$ such that $J \subseteq IS$.  Let $L$ be a maximal ideal of $R$
\end{proof}

Bergman \cite[Theorem 1.1]{Ber} has proved that every group of the form $C(X,{\mathbb{Z}})$, with $X$ a Boolean space, is a free abelian group, so from
 Theorems~\ref{characterization} and~\ref{EX cor} we obtain the following corollary.

\begin{cor} \label{Bergman} If $R$ is an SP-domain with $J(R) \ne 0$, then ${\rm Inv}(R)$ and ${\rm Div}(R)$  are free abelian groups. \qed
\end{cor}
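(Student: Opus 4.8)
The plan is to read off both statements directly from the structural results already established in this section. For $\Inv(R)$: by Theorem~\ref{characterization}, since $R$ is an SP-domain with $J(R) \ne 0$, there is a Boolean space $X$ (namely $X = \Max(R)$) such that $\Inv(R) \cong C(X,\mathbb{Z})$ as $\ell$-groups, and in particular as abelian groups. Then Bergman's theorem \cite[Theorem 1.1]{Ber} says that $C(X,\mathbb{Z})$ is a free abelian group whenever $X$ is a Boolean space. Composing, $\Inv(R)$ is a free abelian group.

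For $\Div(R)$: by Theorem~\ref{EX cor}, with $X = \Max(R)$, there is an isomorphism $\beta : \Div(R) \to C(EX,\mathbb{Z})$ of $\ell$-groups. The key observation is that $EX$, the Gleason cover of $X$, is itself a Boolean space (indeed an extremally disconnected one), as recalled in the discussion preceding Lemma~\ref{completion-Boolean}. Hence Bergman's theorem applies again to $C(EX,\mathbb{Z})$, giving that $\Div(R)$ is a free abelian group as well.

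So the proof is essentially one line in each case: invoke Theorem~\ref{characterization} (resp.\ Theorem~\ref{EX cor}) to identify the group with $C(Y,\mathbb{Z})$ for a Boolean space $Y$, then invoke \cite[Theorem 1.1]{Ber}. The only point that needs a moment's care—and the closest thing to an obstacle here—is making explicit that $EX$ qualifies as a Boolean space so that Bergman's hypothesis is met for the second assertion; but this is exactly what was stated when the Gleason cover was introduced (it is an extremally disconnected Boolean space), so no real work is required. Given how short this is, the statement in the excerpt is already marked \qed, indicating the authors regard it as an immediate corollary needing no separate proof block.
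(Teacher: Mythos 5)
Your proposal is correct and follows exactly the paper's own route: the authors deduce the corollary from Theorems~\ref{characterization} and~\ref{EX cor} together with Bergman's theorem that $C(Y,\mathbb{Z})$ is free for any Boolean space $Y$, noting (as you do) that the Gleason cover $EX$ is itself a Boolean space. Nothing is missing.
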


%\begin{proof} A group of the form $C(X,{\mathbb{Z}})$, with $X$ a Boolean space, is a free abelian group \cite[Theorem 1.1]{Ber}, so the corollary follows from Theorem~\ref{characterization} and Theorem~\ref{EX cor}.
%\end{proof}

%\begin{rem} \label{l remark}
%{\em The group of (necessarily invertible) nonzero ideals of a Dedekind domain is also free. It is only with the lattice ordering on the group of invertible ideals, via Theorem~\ref{characterization}, that an SP-domain  can be distinguished from a Dedekind domain. The group of (necessarily invertible) nonzero ideals of a Dedekind domain is isomorphic to a cardinal sum of copies of ${\mathbb{Z}}$. For an SP-domain this is true only when  the domain has finitely many maximal ideals and hence is a Dedekind domain. }
%\end{rem}

In light of  the fact that a completely integrally closed domain $R$ is  pseudo-Dedekind if and only if $\Inv(R)= \Div(R)$,  the group $\Div(R)/\Inv(R)$ can be viewed as a measure of how far the ring $R$ is from being pseudo-Dedekind.
We next examine this group for SP-domains. Let $n$ be an integer with $n > 1$.  A group $G$ is {\it $n$-divisible}  if for each $g \in G$, there exists $h \in G$ such that $g = nh$.
%\begin{lem} \label{weak-divisor}Let $R$ be a completely integrally closed Pr\"{u}fer domain such that its Kronecker function ring $R^b$ is also completely integrally closed. Then the weak divisor class group $Cl_w(R)$ of $R$ is isomorphic to the divisor class group $Cl(R^b)$ of $R^b$.
%\end{lem}
%\begin{proof} This is just a consequence of Lemma \ref{invertible} and Corollary \ref{completely-integrally}.
%\end{proof}

\begin{thm}  \label{torsion-free}
Let $R$ be an SP-domain with $J(R) \ne 0$. Then
 ${\rm Div}(R)/{\rm Inv}(R)$ is a torsion-free group, and the following statements are equivalent.

 \begin{itemize}
\item[(1)] ${\rm Div}(R) = {\rm Inv}(R)$ (and hence $R$ is a pseudo-Dedekind domain).

\item[(2)]  $\Max(R)$ is an extremally disconnected space.

\item[(3)] ${\rm Div}(R)/{\rm Inv}(R)$ is an $n$-divisible group for some integer $n>1$.
\end{itemize}
\end{thm}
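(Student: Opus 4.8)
The plan is to translate the entire problem into the language of $C(X,\mathbb{Z})$ and $C(EX,\mathbb{Z})$ via the commutative diagram of Theorem~\ref{EX cor}, so that ${\rm Div}(R)/{\rm Inv}(R) \cong C(EX,\mathbb{Z})/\psi(C(X,\mathbb{Z}))$, and then argue purely about these groups of integer-valued continuous functions. For torsion-freeness, I would take $f \in C(EX,\mathbb{Z})$ with $nf \in \psi(C(X,\mathbb{Z}))$ for some $n>1$, say $nf = g\circ j$ for $g \in C(X,\mathbb{Z})$; since $j:EX\to X$ is surjective, $g$ takes only values divisible by $n$, so $g/n \in C(X,\mathbb{Z})$ and $f = \psi(g/n)$, hence $f$ is already in the image. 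This shows the quotient is torsion-free.

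For the equivalences, the implication (1)$\Rightarrow$(3) is trivial (the zero group is $n$-divisible for every $n$), and (1)$\Rightarrow$(2) follows because ${\rm Div}(R)={\rm Inv}(R)$ makes ${\rm Inv}(R) \cong C(X,\mathbb{Z})$ a complete $\ell$-group; I would then invoke (the easy direction of) the fact recalled before Lemma~\ref{completion-Boolean} that $C(EX,\mathbb{Z})$ is complete together with the characterization of $EX$ — more directly, $C(X,\mathbb{Z})$ is complete iff its Stone space $X$ is extremally disconnected, which is standard (cf.~\cite{KM}); since $\psi$ is a dense embedding into the completion $C(EX,\mathbb{Z})$ by Lemma~\ref{completion-Boolean}, $C(X,\mathbb{Z})$ is complete iff $\psi$ is onto iff $j:EX\to X$ is a homeomorphism iff $X$ is extremally disconnected. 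The substantive direction is (3)$\Rightarrow$(1), equivalently (2)$\Rightarrow$(1): assuming $X=\Max(R)$ is \emph{not} extremally disconnected, I must produce some $f \in C(EX,\mathbb{Z})$ that is not a multiple of $n$ modulo $\psi(C(X,\mathbb{Z}))$, i.e.\ such that $f - n h \notin \psi(C(X,\mathbb{Z}))$ for all $h \in C(EX,\mathbb{Z})$.

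To do this, I would use the non-extremal-disconnectedness of $X$ to find an open set $U \subseteq X$ whose closure $\overline{U}$ is not open; equivalently, there is a regular open set $V$ of $X$ that is not clopen. The preimage behavior under the irreducible surjection $j$ is the key tool: by the results of \cite{PW} used in Lemma~\ref{completion-Boolean}, clopen subsets of $EX$ correspond bijectively (via $B \mapsto \overline{j(B)}$ and $W \mapsto \iz j^{-1}(\overline{W})$) to regular closed subsets of $X$, and $\psi(\chi_A) = \chi_{j^{-1}(A)}$ for clopen $A\subseteq X$. I would pick a clopen $B \subseteq EX$ with $\overline{j(B)}$ a regular closed set that is not clopen in $X$ (possible precisely because $X$ is not extremally disconnected), and set $f = \chi_B$. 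An element $h\in C(EX,\mathbb{Z})$ has the form $\sum m_i \chi_{B_i}$ for disjoint clopens $B_i$ of $EX$, so $f - nh$ is a $\mathbb{Z}$-combination of characteristic functions of clopens of $EX$ with at least one coefficient equal to $1$ (namely on the part of $B$ disjoint from all $B_i$, which is nonempty and clopen). The plan is to show such a function cannot lie in $\psi(C(X,\mathbb{Z}))$: a function $g\circ j$ with $g = \sum a_k \chi_{A_k}$ ($A_k$ clopen in $X$) pulls back to $\sum a_k \chi_{j^{-1}(A_k)}$, and the clopen sets of the form $j^{-1}(A)$ are exactly those clopens $W$ of $EX$ with $W = \iz j^{-1}(j(W))$ and $j(W)$ clopen in $X$ — in particular their ``level sets'' project to clopen (not merely regular closed) subsets of $X$. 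Comparing level sets, the coefficient-$1$ level set of $f-nh$ would have to project to a clopen subset of $X$, and by tracking the correspondence back I would derive that $\overline{j(B)}$ is clopen, a contradiction.

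The main obstacle is this last comparison-of-level-sets argument: $C(EX,\mathbb{Z})$ has many representations as combinations of characteristic functions, so I need to argue intrinsically rather than by matching a chosen representation. The cleanest route is probably to work one value at a time: for $m\in\mathbb{Z}$, the set $\{x \in EX : (f-nh)(x) = m\}$ is clopen, and membership in $\psi(C(X,\mathbb{Z}))$ forces \emph{every} such level set $W$ to satisfy $\psi(\chi_{j(W)}) = \chi_W$, hence (by the uniqueness of clopen preimages in \cite[Theorem 6.5(d)]{PW}) $W = j^{-1}(j(W))$ with $j(W)$ clopen. Applying this to the level set where $f-nh$ equals its ``$B$-only'' value $1 \pmod n$ and pushing forward yields clopenness of that piece of $j(B)$, from which — using that $j$ is irreducible and the piece is dense in $\overline{j(B)}$ — I get that $\overline{j(B)}$ itself is clopen, the desired contradiction. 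I expect the bookkeeping around "which value isolates the $B$-only part" to require choosing $n$ and the value carefully (e.g.\ reducing mod $n$ and using that the coefficient on $B$ is $1\not\equiv 0$), but no deep new idea beyond the Gleason-cover correspondence already exploited in Lemma~\ref{completion-Boolean}.
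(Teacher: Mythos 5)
Your proposal is correct, and its architecture matches the paper's: the same reduction via Theorem~\ref{EX cor} to the cokernel of $\psi:C(X,\mathbb{Z})\to C(EX,\mathbb{Z})$, the same torsion-freeness argument (surjectivity of $j$ forces every value of $g$ to be divisible by $n$, so $g/n\in C(X,\mathbb{Z})$), the same identification of (1) with surjectivity of $\psi$ and hence with $X$ being extremally disconnected, and the same witness for the hard direction, namely a clopen $B\subseteq EX$ whose image is regular closed but not clopen. Where you genuinely diverge is the final contradiction in (3)$\Rightarrow$(2). The paper argues pointwise at a boundary point $x\in V\cap\overline{X\setminus V}$: using local constancy of $h$ and the density of $\mathrm{int}\,V$ in $V$ and of $X\setminus V$ in $\overline{X\setminus V}$, it produces lifts $y_1\in C$ and $y_2\notin C$ with $nf(y_1)-1=nf(y_2)$, forcing $n\mid 1$. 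Your argument instead reduces mod $n$: if $\chi_B-nh=g\circ j$, then $B=\{y:g(j(y))\equiv 1\ (\mathrm{mod}\ n)\}=j^{-1}\bigl(\{x:g(x)\equiv 1\ (\mathrm{mod}\ n)\}\bigr)$ is the preimage of a clopen subset of $X$, so $j(B)$ is clopen --- contradicting the choice of $B$. This is at least as clean as the paper's route and avoids the density bookkeeping entirely; note also that the closing ``irreducibility plus density'' step you anticipate needing is superfluous, since $B$ is compact, so $j(B)$ is already closed and equals that clopen set outright. Two minor slips, both harmless: your intermediate claim that $f-nh$ has a coefficient equal to $1$ ``on the part of $B$ disjoint from all $B_i$'' fails when the $B_i$ cover $EX$, but the mod-$n$ level-set version you settle on does not use it; and ``(3)$\Rightarrow$(1), equivalently (2)$\Rightarrow$(1)'' is a misstatement of what you actually prove, which is $\neg(2)\Rightarrow\neg(3)$ --- that is the right implication to close the cycle with (1)$\Leftrightarrow$(2) and (1)$\Rightarrow$(3).
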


\begin{proof}
We first prove  ${\rm Div}(R)/{\rm Inv}(R)$ is torsion-free. By  Theorem~\ref{EX cor}, to prove that
 ${\rm Div}(R)/{\rm Inv}(R)$ is torsion-free, it is enough to show that with $\psi : C(X,\mathbb{Z})\rightarrow C(EX,\mathbb{Z})$, the group $G = $ coker$(\psi)$ is torsion-free.
 Let $j$ be the continuous irreducible surjection $EX\rightarrow X$,
 %
%From the injection map $\psi : C(X,\mathbb{Z})\rightarrow C(EX,\mathbb{Z})$, we identify each $f\in C(X,\mathbb{Z})$ with $f\circ j$ in $C(EX,\mathbb{Z})$, where we recall here that $j$ is the continuous surjection $EX\rightarrow X$.
and let $f\in C(EX,\mathbb{Z})$. Suppose there is an integer $n>0$ such that $nf \in $ Im$(\psi)$.  Then for some nonempty clopen subsets $A_i$ of $X$, $i=1,\ldots,l$, that form a partition of $X$, we can write $nf=\sum_{i=1}^lm_i(\chi_{A_i}\circ j).$
%= \sum_{i=1}^lm_i \chi_{j^{-1}(A_i)}.$$
Let $k\in \{1,\ldots,l\}$, and let $x\in A_k$. Then there is  $y\in EX$ such that $x=j(y)$. Thus $$nf(y)=\sum_{i=1}^lm_i(\chi_{A_i}\circ j)(y)=\sum_{i=1}^lm_i\chi_{A_i}(x)=m_k.$$ This shows that $n$ divides $m_k$ for each $k=1,\ldots,l$. Therefore, $$nf=n\sum_{i=1}^l\frac{m_i}{n}(\chi_{A_i}\circ j) \in n \cdot {\rm Im}(\psi).$$ Since $C(EX,\mathbb{Z})$ is torsion-free, $f \in {\rm Im}(\psi)$, which proves that $G$ is torsion-free.

%It is then clear that
%a completely integrally closed domain is pseudo-Dedekind if and only if ${\rm Div}(R)/{\rm Inv}(R)$ is trivial.
Next, we show that (1), (2) and (3) are equivalent. Let $X = \Max(R)$.
Then $\psi:C(X,{\mathbb{Z}}) \rightarrow C(EX,{\mathbb{Z}})$ is surjective  if and only $j:EX\rightarrow X$ is a homeomorphism. (Recall that as discussed in the proof of Theorem~\ref{characterization}, the space of minimal prime $\ell$-subgroups of $C(Y,{\mathbb{Z}})$ is homeomorphic to $Y$ for any Boolean space $Y$.)
The equivalence of (1) and (2) now follows from
 Theorem \ref{EX cor}.
%${\rm Inv}(R) \cong C(X,\mathbb{Z})$ and ${\rm Div}(R) \cong C(EX,\mathbb{Z})$ as $\ell$-groups.
%
% is isomorphic as an $\ell$-group
%to $C(X,\mathbb{Z})$ for the Boolean space $X = \Max(R)$. By Theorem~\ref{EX cor}, ${\rm Div}(R)$ and $C(EX,\mathbb{Z})$ are isomorphic as $\ell$-groups.
%The isomorphisms involved are natural, so the equivalence of (1) and (2) follows from the fact that $\psi:C(X,{\mathbb{Z}}) \rightarrow C(EX,{\mathbb{Z}})$ is surjective  if and only $j:EX\rightarrow X$ is a homeomorphism. (Recall that as discussed in the proof of Theorem~\ref{characterization}, the space of minimal prime $\ell$-subgroups of $C(Y,{\mathbb{Z}})$ is homeomorphic to $Y$.)
Thus
  to prove the theorem,
%since if $X=EX$, i.e., $X$ is extremally disconnected if and only if $C(EX,\mathbb{Z})/C(X,\mathbb{Z})$ is trivial,
 it is enough to prove that
$G = $ coker$(\psi)$  is $n$-divisible for some integer $n>1$ if and only if $X$ is extremally disconnected.

If $X$ is extremally disconnected, then $G$ is $n$-divisible for every $n>1$ as it is
trivial. Now suppose that $G$ is $n$-divisible for some $n>1$ and $X$ is not extremally disconnected. Then there exists a regular closed subset $V$ of $X$ that is
not clopen.  Let $C = \overline{j^{-1}(\iz V)} $.
%\overline{j^{-1}(\iz V)}$.
Since $j$ is continuous, $C$ is a regular closed set. Since $EX$ is extremally disconnected,  $C$ is clopen and
$\chi_C
\in C(EX,\mathbb{Z})$.  Now, since $G$ is $n$-divisible,
there is $f\in C(EX,\mathbb{Z})$
such that $nf-\chi_{C} = h \circ j$ for some
 $h\in C(X,\Z)$.

 Let $V' = \overline{X\setminus V}$. Since
 $V$ is not clopen, there exists $x \in V \cap V'$.  The open set $h^{-1}(h(x))  \subseteq X$ is a neighborhood of $x$, so since $\iz V$ is dense in $V$, there exists $x_1 \in  \iz V$ such that $h(x_1) = h(x)$. Let $y_1 \in EX$ such that $j(y_1) = x_1$.  Then $y_1 \in j^{-1}(x_1) \subseteq j^{-1}(\iz V) \subseteq C$, so that
\begin{flushleft}
$(*) \ \  \ \ \  \ \ \ \ \ \ \ h(x)  = h(x_1)= (h\circ j)(y_1) = (nf-\chi_{C})(y_1)=nf(y_1) -1.$
\end{flushleft}
Similarly, since $X \setminus V$ is dense in $V'$, there exists $x_2 \in X \setminus V$ such that $h(x_2) = h(x)$.
 Let $y_2 \in EX$ such that $j(y_2)= x_2$.
 Then $y_2 \in j^{-1}(x_2) \subseteq j^{-1}(X \setminus V)$.
 Since $j$ is continuous and $V$ is regular closed, we have
 %\begin{flushleft}
%(*) \ \ \ \ \ \ \ \ \ \ \ \ \ \  \  \ \ \ \ \ \ \ \  \
$$C = \overline{j^{-1}(\iz V)} \subseteq j^{-1}(\overline{\iz V}) = j^{-1}(V).$$
%end{flushleft}
If also $y_2 \in C$,
 then $y_2 \in j^{-1}(V)$, so that $j(y_2) \in (X \setminus V) \cap V$, a contradiction. We conclude   $y_2 \not \in C$ and
$$h(x) = h(x_2)= (h\circ j)(y_2) = (nf-\chi_{C})(y_2)=nf(y_2).$$ Combining this observation with $(*)$, we have $nf(y_1) - 1 = h(x) =  nf(y_2)$, which implies $n(f(y_1) - f(y_2)) = 1$, a contradiction to the fact that  $n>1$ and $f(y_1)-f(y_2) \in {\mathbb{Z}}$.
Therefore,
 $G$ is not $n$-divisible.
\end{proof}

\begin{rem} {\em
Let $X$ be a Boolean space. The proof of
 Theorem~\ref{torsion-free} shows that the cokernel
 of the embedding $C(X,{\mathbb{Z}}) \rightarrow C(EX,{\mathbb{Z}})$ is torsion-free, and it is $n$-divisible for some $n>1$ if and only if $X$ is extremally disconnected.   We do not know (a)~whether  the cokernel $G(X)$ of the embedding $C(X,{\mathbb{Z}}) \rightarrow C(EX,{\mathbb{Z}})$ is free, or (b) whether there exists a Boolean space $X$ such that $G(X)$ is finitely generated and nontrivial. With regards to (a),  the groups $C(X,{\mathbb{Z}})$ and $C(EX,{\mathbb{Z}})$ are free by the theorem of Bergman \cite[Theorem 1.1]{Ber} cited above.
 }
\end{rem}

%By Corollary~\ref{Bergman}, if $R$ is an SP-domain with $J(R) \ne 0$, then the groups $\Inv(R)$ and $\Div(R)$ are free. We do not know however whether (a) the group $\Div(R)/\Inv(R)$ is free, or (b) whether $\Div(R)/\Inv(R)$ can be finitely generated but nonzero. By the previous remark, these questions can be viewed as asking (a) whether for a Boolean space $X$, the cokernel $G(X)$ of the embedding $C(X,{\mathbb{Z}}) \rightarrow C(EX,{\mathbb{Z}})$ is free, and (b) whether there exists a Boolean space $X$ such that $G(X)$ is finitely generated and nontrivial.

\section{Cantor-Bendixson theory for one-dimensional Pr\"ufer domains}

A one-dimensional Pr\"ufer domain $R$  is a Dedekind domain if and only if every maximal ideal of $R$ is finitely generated. Thus if $R$ is not Dedekind, then there exist maximal ideals that are not finitely generated. More generally, a maximal ideal of a one-dimensional Pr\"ufer domain need not even be the radical of a finitely generated ideal. For example, if $R$ is an almost Dedekind domain, then it is routine to see that a maximal ideal is finitely generated if and only if it contains a finitely generated ideal that is not contained in any other maximal ideal of $R$.  Following \cite{LL}, we say a maximal $M$ of a Pr\"ufer domain $R$ is {\it sharp} if it contains a finitely generated ideal that is not contained in any other maximal ideal; otherwise, $M$ is {\it dull}. Thus a maximal ideal of a one-dimensional Pr\"ufer domain is sharp if and only if it is the radical of a finitely generated ideal, and a maximal ideal of an almost Dedekind domain is sharp if and only if it is finitely generated.

Recently, Loper and Lucas \cite{LL} have introduced the notions of sharp and dull degrees of maximal ideals of one-dimensional Pr\"ufer domains as a measure of how far a maximal ideal is from being sharp.
 In this section we recast their theory in topological terms and use Cantor-Bendixson theory to shed additional light on the sharp and dull degrees of maximal ideals.
 We  show  in particular that the sharp and dull degrees of one-dimensional Pr\"ufer domains are topological invariants of their maximal spectra when viewed with the inverse topology. (When also the Pr\"ufer domain has nonzero Jacobson radical, this topology coincides with the Zariski topology on $\Max(R)$.)  This allows us to prove existence results for almost Dedekind domains of various sharp and dull degrees. These results give a different approach, as well as  extend, similar existence theorems due to  Loper and Lucas.  In our context these existence results  reduce to quick consequences of well-known facts about Boolean spaces.

Loper and Lucas' definition of sharp and dull degree is motivated by a characterization of sharp maximal ideals:
A maximal ideal $M$ of a Pr\"ufer domain $R$ is sharp if and only if $\bigcap_{N \ne M}R_N \not \subseteq R_M$, where $N$ ranges over the maximal ideals of $R$ distinct from $M$ \cite[Corollary 2.]{GH}; i.e., $M$ is sharp if and only if $R_M$ is irredundant in the representation of $R$ as the intersection of its localizations at maximal ideals. While $R_M$ may not be irredundant in this representation, it could be irredundant in representations of overrings of $R$, and it is this observation that provides the motivation for Loper and Lucas' definition of sharp and dull degrees of a one-dimensional Pr\"ufer domain. (Alternatively, while $M$ may not be sharp in $R$, it can extend to a sharp maximal ideal of an overring of $R$.)
 We extend their definition using transfinite induction to permit infinite degree also.

\begin{defi}
Let $R$ be a Pr\"ufer domain.
%A prime ideal $P$ of  $R$ is {\it sharp} if contains a finitely generated ideal $I$ such that that the only maximal ideals containing $I$ are those that contain $P$. If $P$ is not  sharp, then $P$ is a {\it dull} prime ideal.
Denote by $\Ms(R)$ the set of sharp maximal ideals of $R$  and by $\Md(R)$ the set of dull maximal ideals of $R$.
Define $R_0=R$. For each ordinal number $\alpha$, define $R_{\alpha+1} = \bigcap_{M \in \Md(R_\alpha)} R_M$, and for each limit ordinal number $\lambda$, define $R_{\lambda} = \bigcup_{\alpha < \lambda}R_\alpha$.
 \begin{itemize}
 \item[(1)] The ring $R$ has  {\it sharp degree} $\alpha$ if $R_{\alpha+1}$ is the quotient field  $F$ of $R$ and $R_\alpha \ne R_{\alpha+1}$; equivalently, $R$ has sharp degree $\alpha$ if $\Md(R_\alpha) = \emptyset$ and $\Md(R_\beta) \ne \emptyset$ for all $\beta < \alpha$. We say $R$ {\it has a sharp degree} if $R$ has sharp degree $\alpha$ for some $\alpha$.

 \item[(2)]
 The ring $R$ has {\it dull degree} $\alpha$ if $R_{\alpha} = R_{\alpha+1} \ne F$ and $R_{\beta} \ne R_\alpha$ for all $\beta < \alpha$. In this case, we say $R$ {\it has a dull degree}.
 \end{itemize}
\end{defi}

% Define $R_1 = \bigcap_{M \in \Md(R)}R_M$, and for each $i\geq 1$, define $R_{i+1} = \bigcap_{M \in \Md(R)}(R_i)_M$.  Following \cite{LL}, the {\it sharp degree} of $R$ is $n$ if $R_{n+1}$ is the quotient field  $F$ of $R$ and $R_n \ne R_{n+1}$, while $R$ has {\it dull degree} $n$ if $R_{n} = R_{n+1} \ne F$ and $R_{n-1} \ne R_n$.

Our main result of this section, Theorem~\ref{connect}, connects sharp and dull degrees with the Cantor-Bendixson rank of the topological space $\Max(R)$ endowed with the inverse topology. We recall this topology.

\begin{defi} Let $R$ be a ring. We denote by $\Maxi(R)$ the topological space whose underlying set is $\Max(R)$ and which has a basis of open sets of the form $\{M \in \Max(R):r_1,\ldots,r_n \in M\}$, where $r_1,\ldots,r_n \in R$; equivalently, the quasicompact open subsets of the space $\Max(R)$ (with respect to the Zariski topology) form a basis of closed of sets for $\Maxi(R)$.  This topology on $\Maxi(R)$ is  the {\it inverse topology}.
\end{defi}

\begin{lem} \label{connection} Let $R$ be a one-dimensional Pr\"ufer domain.
\begin{itemize}
\item[(1)] Let $X$ be a subspace of $\Maxi(R)$. Then $M$ is a limit point for $X$ if and only if every finitely generated ideal of $R$ contained in $M$ is contained in another maximal ideal in $X$.

\item[(2)] A maximal ideal $M$ of $R$ is sharp if and only if $M$ is an isolated point in $\Maxi(R)$; $M$ is dull if and only if $M$ is a limit point in $\Maxi(R)$.

\item[(3)] If $I$ is a nonzero proper ideal of $R$, then  the Zariski and inverse topologies on $\Spec(R/I)$ coincide.

\item[(4)] The Zariski and inverse topologies on $\Max(R)$ coincide if and only if $J(R)\ne 0$.
\end{itemize}
\end{lem}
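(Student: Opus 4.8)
The plan is to take the four parts in order: parts (1) and (2) are an unwinding of the definition of the inverse topology; part (3) uses the fact recorded in the introduction that $\Spec(R/I)$ is a Boolean space; and part (4) combines (3) with a quasicompactness observation. For (1), note that a basic open neighborhood of a maximal ideal $M$ in $\Maxi(R)$ is a set $\{N\in\Max(R):r_1,\dots,r_n\in N\}$ with $r_1,\dots,r_n\in M$, and as the finitely generated ideal $r_1R+\cdots+r_nR$ ranges over the finitely generated ideals contained in $M$, these sets form a neighborhood basis at $M$. Hence $M$ is a limit point of $X$ exactly when each such basic neighborhood meets $X$ in a point other than $M$, which says precisely that every finitely generated ideal of $R$ contained in $M$ is contained in some maximal ideal of $X$ distinct from $M$. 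For (2), apply (1) with $X=\Max(R)$: $M$ is a limit point of $\Maxi(R)$ iff every finitely generated ideal contained in $M$ lies in another maximal ideal, which by the definition of sharpness means exactly that $M$ is dull; and since a point of a topological space is isolated precisely when it is not a limit point of the whole space, $M$ is sharp iff $M$ is isolated in $\Maxi(R)$.

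For (3), since $R$ is one-dimensional and $I\ne 0$, every prime of $R$ containing $I$ is maximal, so $\Spec(R/I)$ is identified with a subspace of $\Max(R)$ and its inverse topology is just the inverse topology of the spectral space $\Spec(R/I)$ taken intrinsically. By the introduction, $Y:=\Spec(R/I)$ is a Boolean space in the Zariski topology, and the key point is the purely topological fact that a Boolean space agrees with its own inverse topology. I would argue this directly: in a compact Hausdorff space an open set is quasicompact if and only if it is clopen (a quasicompact subset of a Hausdorff space is closed, and a closed subset of a compact space is quasicompact), so the clopen sets of $Y$ form a basis of closed sets for the inverse topology $Y^{-1}$; since $Y$ has a clopen basis, every closed subset of $Y$ is an intersection of clopens, and conversely, so the closed sets of $Y$ and of $Y^{-1}$ coincide.

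For (4), if $J(R)\ne 0$ choose $0\ne a\in J(R)$; then $a$ lies in every maximal ideal and every prime containing $a$ is maximal, so $\Spec(R/aR)$ is $\Max(R)$ carrying the same Zariski and the same inverse topology, and (3) applied to $I=aR$ shows the two topologies on $\Max(R)$ coincide. Conversely, if $J(R)=0$ I would show $\Maxi(R)$ is not quasicompact, whereas $\Max(R)$ with the Zariski topology always is (an ideal contained in no maximal ideal is the whole ring), so the two topologies differ. For this, observe that the basic open sets $\{M\in\Max(R):a\in M\}$, with $a$ ranging over the nonzero nonunits of $R$ (which exist since $R$ is not a field), cover $\Max(R)$ because every maximal ideal is nonzero; a finite subcover would give $\{M:a_1\in M\}\cup\cdots\cup\{M:a_n\in M\}=\{M:a_1\cdots a_n\in M\}=\Max(R)$, forcing $a_1\cdots a_n\in J(R)=0$, which is impossible in a domain with all $a_i\ne 0$.

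I expect part (3) to carry the real content: once one knows that $\Spec(R/I)$ being Boolean forces its inverse topology to collapse onto the Zariski one, part (4) reduces to the short quasicompactness argument above, and parts (1) and (2) are formal consequences of the definition of $\Maxi(R)$ together with the definitions of sharp and dull maximal ideals.
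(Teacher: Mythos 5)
Your proposal is correct, and on parts (1), (2), and (4) it runs essentially parallel to the paper: (1) and (2) are the same unwinding of the basic neighborhoods of $\Maxi(R)$, and your part (4) is the paper's quasicompactness argument, merely packaged as the contrapositive (you show $\Maxi(R)$ fails to be quasicompact when $J(R)=0$, whereas the paper assumes the topologies coincide and extracts a nonzero element of $J(R)$ from a finite subcover). The genuine divergence is in part (3). The paper argues ring-theoretically: after reducing to $I$ radical, it uses that $R/I$ is von Neumann regular to produce, for each finitely generated ideal $J$, a finitely generated ``complement'' $K$ with $J\cap K\subseteq I$ and $J+K=R$, so that the basic inverse-closed set $V_I(J)$ equals the Zariski-open set $U_I(K)$ by an explicit ideal computation. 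You instead quote that $\Spec(R/I)$ is a Boolean space and prove the purely topological fact that a Boolean spectral space coincides with its own inverse topology (quasicompact opens are exactly the clopens in a compact Hausdorff space, and in a space with a clopen basis the closed sets are precisely the intersections of clopens). Your route is more conceptual and isolates a general statement about spectral spaces, at the cost of importing the cited fact that $\Spec$ of a zero-dimensional ring is Boolean; the paper's route stays inside the ring and exhibits the complementary finitely generated ideal concretely, which is in the spirit of the rest of Section 6. Both arguments ultimately rest on the zero-dimensionality of $R/I$, and both are complete.
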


\begin{proof}
(1)  Suppose there is a  finitely generated ideal $I$ of $R$ contained in $M$ but in no other maximal ideal in $X$. Then  $\{M\} = \{N \in X \cup \{M\}:I \subseteq N\}$ is open in $X \cup \{M\}$, so $M$ is an isolated point in $X \cup \{M\}$. Conversely, if
$M$ is an isolated point in $X \cup \{M\}$, then there is a finitely generated ideal $J$ of $R$ such that $\{M\} = \{N \in X \cup \{M\}:J \subseteq N\}$.

(2) This follows from (1).

(3) Without loss of generality we assume that $I$ is a radical ideal of $R$.  For each ideal $K$ of $R$, let $V_I(K) = \{P \in \Spec(R):I+K  \subseteq P\}$ and $U_I(K) = \{P \in \Spec(R):I \subseteq P$ and $K \not \subseteq P\}$.
To prove (3), it suffices to show that for each finitely generated ideal $J$ of $R$, there is a finitely generated ideal $K$ of $R$ such that  $V_I(J)  = U_I(K)$.
 Let $J$ be a finitely generated ideal of $R$.
Since $I$ is a nonzero radical ideal and $R/I$ has Krull dimension $0$, $R/I$ is a von Neumann regular ring. Thus $(J+{I})/{I}$ is a summand of $R/I$, so that  there exists an ideal $L$ of $R$ containing $I$ such that $(J+I) \cap L = I$ and $J+  L = R$.  This implies there is a finitely generated ideal $K \subseteq L$ such that $J \cap K \subseteq I$ and $J + K = R$. Therefore, $V_I(J)= U_I(K)$, which proves the claim.

(4) Suppose that the Zariski and inverse topologies on $\Max(R)$ coincide. Note that $\Max(R)$ is compact with respect to the Zariski topology. Therefore, $\Maxi(R)$ is compact. Since $\Max(R)=\bigcup_{a\in R\setminus\{0\}}\{M\in\Max(R):a\in M\}$, we infer that $\Max(R)=\bigcup_{a\in E}\{M\in\Max(R):a\in M\}$ for some finite nonempty $E\subseteq R\setminus\{0\}$. Observe that $0\ne\prod_{a\in E} a\in J(R)$.

To prove the converse, suppose that $J(R)\ne 0$. By (3) it follows that the Zariski and inverse topologies on $\Spec(R/J(R))$ coincide, and hence the Zariski and inverse topologies on $\Max(R)$ coincide.
\end{proof}

%We recall in the next definition the Cantor-Bendixson rank of a topological space.

\begin{defi}
Let $X$ be a topological space. Define $X^0 = X$, and for each ordinal number $\alpha$ let $X^{\alpha+1}$ denote the set of limit points of $X^\alpha$.   For each limit ordinal $\lambda$, let $X^{\lambda} = \bigcap_{\alpha< \lambda}X^\alpha$. The closed set $X^\alpha$ of $X$ is the $\alpha$-th {\it Cantor-Bendixson derivative} of $X$. The {\it Cantor-Bendixson rank} of $X$, $\rk(X)$, is the smallest ordinal number $\alpha$ for which $X^{\alpha} = X^{\alpha + 1}$. % For any topological space $X$, $\rk(X)$ is a countable ordinal \cite[p.~262]{Kur}.
The topological space $X$ is {\it scattered} if $X^{\rk(X)} = \emptyset$ (equivalently, every nonempty subspace of $X$ contains an isolated point).
\end{defi}

By Lemma~\ref{connection}(2), if $X = \Max^{-1}(R)$,  the set $X^1$ of limit points of $X$ is $\Md(R)$.
Consequently, $R_1 = \bigcap_{M \in X^1}R_M$.  We extend this to all ordinals in the next lemma.
To do so, we recall that if $R$ is a one-dimensional Pr\"ufer domain and $S$ is a ring between $R$ and its quotient field such that $S$ is not a field, then $S$ is also a one-dimensional Pr\"ufer domain with $\Max(S) = \{MS:M \in \Max(R), MS \ne S\}$ \cite[Theorem 1]{GO}.

\begin{lem} \label{derivative} \label{easy lemma} Let $R$ be a one-dimensional Pr\"ufer domain with quotient field $F$, let $X= \Maxi(R)$  and let $\alpha$ be an ordinal number. Then
\begin{itemize}
\item[{(1)}]
 $\Max(R_\alpha)\setminus\{0\} = \{MR_\alpha:M \in X^\alpha\}$,

 \item[{(2)}] $X^\alpha =  \{M \in X:MR_\alpha \ne R_\alpha\}$, and

 \item[{(3)}]
 $R_\alpha = \bigcap_{M \in X^\alpha}R_M$.
\end{itemize}
 Thus if $\beta$ is an ordinal, $X^\alpha = X^\beta$ if and only if $R_\alpha = R_\beta$.

\end{lem}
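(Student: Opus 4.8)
The plan is to prove (1), (2), and (3) simultaneously by transfinite induction on $\alpha$, with the base case $\alpha = 0$ being immediate since $X^0 = X = \Maxi(R)$, $R_0 = R$, and $MR = M \ne R$ for every $M \in \Max(R)$. For the successor step, I would first invoke Lemma~\ref{connection}(2) in the ring $R_\alpha$: a maximal ideal $MR_\alpha$ of $R_\alpha$ is dull precisely when it is a limit point of $\Maxi(R_\alpha)$. The key observation is that $\Maxi(R_\alpha)$ is homeomorphic, via $M \mapsto MR_\alpha$, to the subspace $X^\alpha$ of $X$: by the cited result \cite[Theorem 1]{GO} the underlying sets correspond as in the induction hypothesis (1), and because every ideal of $R_\alpha$ is extended from $R$ (being a Pr\"ufer overring, contractions and extensions of finitely generated ideals behave well), the basic inverse-open sets match up. Granting this homeomorphism, the limit points of $\Maxi(R_\alpha)$ correspond exactly to the limit points of $X^\alpha$, i.e.\ to $X^{\alpha+1}$. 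Hence $\Md(R_\alpha) = \{MR_\alpha : M \in X^{\alpha+1}\}$, and since $R_{\alpha+1} = \bigcap_{M \in \Md(R_\alpha)} (R_\alpha)_{MR_\alpha} = \bigcap_{M \in X^{\alpha+1}} R_M$ (using that $(R_\alpha)_{MR_\alpha} = R_M$ for a Pr\"ufer overring), statement (3) follows for $\alpha+1$. Then (1) for $\alpha+1$ is another application of \cite[Theorem 1]{GO} to the overring $R_{\alpha+1}$, and (2) for $\alpha+1$ is the statement that $MR_{\alpha+1} \ne R_{\alpha+1}$ iff $M \in X^{\alpha+1}$, which is read off from the description of $\Max(R_{\alpha+1})$ just obtained together with the containment $X^{\alpha+1} \subseteq X^\alpha$.

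For the limit step, suppose $\lambda$ is a limit ordinal and the three statements hold for all $\alpha < \lambda$. Here $R_\lambda = \bigcup_{\alpha < \lambda} R_\alpha$ and $X^\lambda = \bigcap_{\alpha<\lambda} X^\alpha$. For (3) I would show $\bigcup_{\alpha<\lambda} \bigcap_{M \in X^\alpha} R_M = \bigcap_{M \in X^\lambda} R_M$: the inclusion $\subseteq$ is clear since $X^\lambda \subseteq X^\alpha$ for each $\alpha$; for $\supseteq$, take $t$ in the right-hand side, note that the set $\{M \in \Max(R) : t \notin R_M\}$ is quasicompact open in the Zariski topology (hence closed in $\Maxi(R)$) and disjoint from $X^\lambda = \bigcap_\alpha X^\alpha$, so by compactness of $\Maxi(R)$ and the descending chain $X^0 \supseteq X^1 \supseteq \cdots$ it is already disjoint from some $X^\alpha$ with $\alpha < \lambda$, giving $t \in \bigcap_{M \in X^\alpha} R_M \subseteq R_\lambda$. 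Then (1) and (2) for $\lambda$ follow as in the successor case by applying \cite[Theorem 1]{GO} to the overring $R_\lambda$ and matching maximal ideals with points of $X^\lambda$ via the just-established equality $R_\lambda = \bigcap_{M\in X^\lambda} R_M$.

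The final sentence, that $X^\alpha = X^\beta$ iff $R_\alpha = R_\beta$, is then a formality: if $X^\alpha = X^\beta$ then $R_\alpha = \bigcap_{M\in X^\alpha} R_M = \bigcap_{M \in X^\beta} R_M = R_\beta$ by (3); conversely if $R_\alpha = R_\beta$ then by (2) $X^\alpha = \{M \in X : MR_\alpha \ne R_\alpha\} = \{M \in X : MR_\beta \ne R_\beta\} = X^\beta$. I expect the main obstacle to be the limit step, specifically the compactness argument identifying $\bigcup_{\alpha<\lambda}\bigcap_{M\in X^\alpha} R_M$ with $\bigcap_{M\in X^\lambda} R_M$; one must be careful that $t \notin R_M$ defines a set that is closed in the inverse topology (equivalently quasicompact open in the Zariski topology), which is where the Pr\"ufer hypothesis and the valuative description of $R_M$ enter. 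A secondary point requiring care throughout is the bookkeeping that passing to the Pr\"ufer overring $R_\alpha$ genuinely restricts the inverse-topology structure to the subspace $X^\alpha$, which rests on \cite[Theorem 1]{GO} and on the fact that finitely generated ideals of $R_\alpha$ are extensions of finitely generated ideals of $R$.
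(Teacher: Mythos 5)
Your base case and successor step are essentially the paper's argument: the paper establishes the same correspondence by combining \cite[Theorem 1]{GO}, the equivalence ``dull $=$ limit point'' from Lemma~\ref{connection}, and the fact that finitely generated ideals of the overring $R_\alpha$ are extended from finitely generated ideals of $R$; your packaging of this as a homeomorphism $\Maxi(R_\alpha)\cong X^\alpha$ is the same content.

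The genuine gap is in your limit step, and it is exactly at the point you flag as delicate. You assert that $Z:=\{M\in\Max(R):t\notin R_M\}$ is quasicompact open in the Zariski topology, hence closed in $\Maxi(R)$, and you then invoke compactness of $\Maxi(R)$. Both premises are false. Writing $t=a/b$, one computes $Z=\{M: (bR:_R aR)\subseteq M\}=V(I)\cap\Max(R)$ with $I=(bR:_R aR)$ a nonzero finitely generated ideal; this set is Zariski-\emph{closed}, hence a basic \emph{open} (not closed) subset of $\Maxi(R)$. Moreover $\Maxi(R)$ is compact only when $J(R)\ne 0$ (see Lemma~\ref{connection}(4) and its proof; for $R=\Z$ the inverse topology on $\Max(\Z)$ is infinite discrete), and the lemma carries no Jacobson-radical hypothesis. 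The argument is repairable: $Z$, though open, is itself compact in the inverse topology, being homeomorphic to $\Spec(R/I)$ on which the Zariski and inverse topologies coincide by Lemma~\ref{connection}(3); then the descending chain $\{Z\cap X^\alpha\}_{\alpha<\lambda}$ of closed subsets of the compact space $Z$ with empty intersection forces $Z\cap X^\alpha=\emptyset$ for some $\alpha<\lambda$. But the whole detour is unnecessary, and the paper takes a much shorter route at limit ordinals: since $R_\lambda=\bigcup_{\alpha<\lambda}R_\alpha$ is a directed union, $MR_\lambda=R_\lambda$ if and only if $1\in MR_\alpha$ for some $\alpha<\lambda$, so $MR_\lambda\ne R_\lambda$ if and only if $M\in\bigcap_{\alpha<\lambda}X^\alpha=X^\lambda$; this gives (1) and (2) at $\lambda$ directly, and (3) then follows because a Pr\"ufer overring that is not a field is the intersection of its localizations at its maximal ideals. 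A secondary soft spot in your plan: recovering (2) at $\lambda$ from the equality $R_\lambda=\bigcap_{M\in X^\lambda}R_M$ alone requires ruling out that some $M\notin X^\lambda$ still satisfies $R_\lambda\subseteq R_M$ (i.e., that $R_M$ is redundant in the intersection), which does not follow from (3) by itself, though it is immediate from the induction hypothesis as above.
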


\begin{proof}
(1) The proof is by transfinite induction. Suppose (1) holds for an ordinal $\alpha$.  Let $N \in \Max(R_{\alpha+1})\setminus\{0\}$.  Then, as discussed before the lemma, $N = MR_{\alpha+1}$ for some maximal ideal $M$ of $R$.
 Thus $MR_\alpha \ne R_\alpha$, and so $MR_\alpha \in \Max(R_\alpha)\setminus\{0\}$, which
 by the induction hypothesis implies $M \in X^\alpha$.
Moreover, since the maximal ideal $MR_{\alpha+1}$ of $R_{\alpha+1}$ is extended from a maximal ideal of $R_\alpha$, $MR_{\alpha}$   is necessarily dull in $R_\alpha$ \cite[Lemma 2.1]{LL}.  Therefore, every finitely generated ideal of $R$ that is contained in $M$ is contained in some other maximal ideal of $R_\alpha$. Since   $\Max(R_\alpha)\setminus\{0\} = \{LR_\alpha:L \in X^\alpha\}$,  every finitely generated ideal of $R$ that is contained in $M$ is contained in some other maximal ideal in $X^\alpha$. By Lemma~\ref{connection}(1), $M$ is  a limit point in $X^\alpha$, so that $M \in X^{\alpha+1}$.
%Therefore, $X^{\alpha+1} = \{M \in X:MR_{\alpha+1} \ne R_{\alpha+1}\}$.
This shows that $\Max(R_{\alpha+1})\setminus\{0\} \subseteq \{MR_\alpha:M \in X^{\alpha+1}\}$.

Conversely, suppose
 $M \in X^{\alpha +1}$.  Then $M \in X^\alpha$, so  since (1) holds for $\alpha$, $MR_\alpha$ is a maximal ideal of $R_\alpha$.  Since $M$ is a limit point in $X^\alpha$,  Lemma~\ref{connection}(1) implies that every  finitely generated ideal of $R$ contained in $M$ is contained in some other maximal ideal in $X^\alpha$.  Let $I = (x_1,\ldots,x_n)R_{\alpha}$ be a finitely generated ideal of $R_\alpha$ contained in $MR_{\alpha}$, and let $J = (x_1,\ldots,x_n)R \cap R$. Then $J$ is a finitely generated ideal of $R$ \cite[Exercise 1.1, p.~95]{FS} and
 $I = JR_\alpha$ \cite[Claim (A), p.~95]{FS}.  Since $J$ is contained in $M$, $J$ is contained in some other maximal ideal in $X^\alpha$. Since (1) holds for $\alpha$, it follows that
  $I = JR_\alpha$ is contained in some other maximal ideal in $\Max(R_\alpha)$.
Therefore, $MR_\alpha$ is a dull maximal ideal of $R_\alpha$, so that $MR_{\alpha +1} \ne R_{\alpha +1}$ and hence $MR_{\alpha+1} \in \Max(R_{\alpha+1})\setminus\{0\}$.  This proves $\Max(R_{\alpha+1})\setminus\{0\} = \{MR_{\alpha+1}:M \in X^{\alpha+1}\}$, which verifies (1) for $\alpha+1$.

%To prove the reverse inclusion, let $M \in X$ such that $MR_{\alpha+1} \ne R_{\alpha +1}$.  Then $MR_\alpha \ne R_\alpha$, so by the induction hypothesis, $M \in X^\alpha$.
%Moreover,  $MR_{\alpha+1}$ is extended from a maximal ideal of $R_\alpha$, which by  \cite[Lemma 2.1]{LL} is necessarily dull in $R_\alpha$.  Therefore, every finitely generated ideal of $R$ that is contained in $M$ is contained in some other maximal ideal of $R_\alpha$. Since $\Max(R_\alpha) = \{MR_\alpha:M \in X^\alpha\}$, we have by Lemma~\ref{connection} that $M$ is a limit point in $X^\alpha$, and hence $M \in X^{\alpha+1}$.  Therefore, $X^{\alpha+1} = \{M \in X:MR_{\alpha+1} \ne R_{\alpha+1}\}$.

Next, suppose that $\lambda$ is a limit ordinal and  (1) holds for every $\alpha < \lambda$.  Let $M$ be a maximal ideal of $R$. Then $MR_{\lambda} \ne R_{\lambda}$ if and only if $MR_{\alpha} \ne R_{\alpha}$ for each $\alpha < \lambda$, if and only if (by (1)) $M \in \bigcap_{\alpha < \lambda}X^{\alpha}$, if and only if $M \in  X^{\lambda}$.  Therefore, $\Max(R_\lambda)\setminus\{0\} = \{MR_\lambda:M\in\Max(R),MR_\lambda\not=R_\lambda\} = \{MR_\lambda:M \in X^{\lambda}\}$.
This completes the induction and the proof of (1).

(2) If $M \in X^\alpha$, then, by (1), $MR_\alpha \ne R_\alpha$.  Conversely, if $M \in \Max(R)$ and $MR_\alpha \ne R_\alpha$, then, as discussed before the lemma, $MR_\alpha$ is a maximal ideal of $R_\alpha$ and hence by (1), $M \in X^\alpha$.

(3)
This follows from (1).

The final assertion follows from (2) and (3).
\end{proof}

%\begin{lem} An ideal $M \in \Max(R)$ is in $X^{\alpha+1}$ but not $X^\alpha$ if and only if $M$ extends to a dull maximal ideal in $R_\alpha$ and  a sharp maximal ideal in $R_{\alpha+1}$.
%\end{lem}

%\begin{lem} Let $R$ be a one-dimensional Pr\"ufer domain, let $X = \Maxi(R)$ and let $\alpha$ be an ordinal number. Then \begin{center} $X^{\alpha+1} = \{M \in X: MR_\alpha$ is a dull prime ideal of $R_\alpha\}.$\end{center}
%\end{lem}

%\begin{proof} Let $N \in X^{\alpha+1}$. Then $N$ is a limit point in $X^\alpha$, so that by Lemma~\ref{derivative}, $$\bigcap_{P \in {\mbox{Max}}(R_\alpha),P \ne NR_\alpha} (R_\alpha)_P = \bigcap_{M \in X^\alpha, M \ne N}R_M \subseteq R_N,$$ which implies that $NR_\alpha$ is a dull prime ideal of $R_\alpha$.

%\end{proof}

%A topological space  $X$ is {\it scattered} if every nonempty (closed) subset contains an isolated point in the subspace topology;  equivalently, $X^\alpha = \emptyset$ for some ordinal $\alpha$.

\begin{thm} \label{connect} Let $R$ be a one-dimensional Pr\"ufer domain, let $X = \Maxi(R)$ and let $\alpha$ be an ordinal number.
\begin{itemize}
\item[(1)] The ring $R$ has sharp degree $\alpha$  if and only if $ \rk(X)=\alpha+1$ and $X^{\alpha+1} = \emptyset$. Thus if $R$ has a sharp degree, then $X$ is a scattered space.
%
%is a scattered space. In this case,
% $ \rk(X)=\alpha+1$ and $X^{\alpha+1} = \emptyset$.
%In this case, $\alpha$ is a countable ordinal.

\item[(2)] If $X$ is a scattered space and $J(R)\ne 0$, then $R$ has a sharp degree.

\item[(3)] The ring $R$ has dull degree $\alpha$ if and only if  $ \rk(X) = \alpha $ and  $X^{\alpha} \ne \emptyset$. Thus   $R$ has a dull degree  if and only if $X$ is not a scattered space.

 \end{itemize}
\end{thm}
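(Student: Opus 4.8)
The plan is to reduce the whole theorem to bookkeeping about the descending chain of Cantor--Bendixson derivatives $X = X^0 \supseteq X^1 \supseteq \cdots$, using Lemma~\ref{derivative} as a dictionary between the rings $R_\alpha$ and the sets $X^\alpha$. The two facts I would extract are: (i) $R_\alpha = F$ if and only if $X^\alpha = \emptyset$, and (ii) $R_\alpha = R_\beta$ if and only if $X^\alpha = X^\beta$, for all ordinals $\alpha,\beta$. Statement (ii) is exactly the final assertion of Lemma~\ref{derivative}; for (i), Lemma~\ref{derivative}(3) gives $R_\alpha = \bigcap_{M \in X^\alpha} R_M$, and since each localization $R_M$ of the one-dimensional domain $R$ at a maximal ideal satisfies $R_M \ne F$, while the empty intersection is $F$, we get $R_\alpha = F$ precisely when $X^\alpha = \emptyset$. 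I would also record the standard facts about the derivatives used repeatedly below: the chain $(X^\alpha)$ is weakly decreasing; once $X^\alpha = X^{\alpha+1}$ it is constant from $\alpha$ onward (a routine transfinite induction, using $X^{\lambda} = \bigcap_{\beta<\lambda}X^\beta$ at limits); and $\rk(X)$ is by definition the least $\alpha$ with $X^\alpha = X^{\alpha+1}$, so $X^\beta \supsetneq X^{\beta+1}$ for every $\beta < \rk(X)$.

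With the dictionary in hand, parts (1) and (3) are pure bookkeeping. For (1): if $R$ has sharp degree $\alpha$, then $R_{\alpha+1} = F$ and $R_\alpha \ne R_{\alpha+1}$, so $X^{\alpha+1} = \emptyset$ and $X^\alpha \ne \emptyset$; since then $X^{\alpha+1} = X^{\alpha+2}$ while $X^\alpha \ne X^{\alpha+1}$, we get $\rk(X) = \alpha+1$, and $X^{\rk(X)} = \emptyset$ shows $X$ is scattered. Conversely, if $\rk(X) = \alpha+1$ and $X^{\alpha+1} = \emptyset$, then $R_{\alpha+1} = F$ by the dictionary, while $\alpha < \rk(X)$ forces $X^\alpha \ne X^{\alpha+1} = \emptyset$, hence $R_\alpha \ne F = R_{\alpha+1}$, i.e.\ $R$ has sharp degree $\alpha$. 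For (3): $R$ has dull degree $\alpha$ exactly when $R_\alpha = R_{\alpha+1} \ne F$ and $R_\beta \ne R_\alpha$ for all $\beta < \alpha$, which the dictionary turns into $X^\alpha = X^{\alpha+1}$, $X^\alpha \ne \emptyset$, and $X^\beta \ne X^\alpha$ for all $\beta < \alpha$; since the chain is constant once it repeats, the first and third of these are together equivalent to $\alpha = \rk(X)$, so the condition becomes $\rk(X) = \alpha$ and $X^\alpha \ne \emptyset$. The two ``thus'' clauses then fall out: $R$ has a sharp degree forces $X^{\rk(X)} = \emptyset$, i.e.\ $X$ scattered; and, applying (3) with $\alpha = \rk(X)$, $R$ has a dull degree if and only if $X^{\rk(X)} \ne \emptyset$, i.e.\ $X$ is not scattered.

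For (2) I would supply the one genuinely topological ingredient. When $J(R) \ne 0$, Lemma~\ref{connection}(4) identifies $X = \Maxi(R)$ with $\Max(R)$ in its Zariski topology, which is quasicompact. The claim to establish is that a quasicompact scattered space has Cantor--Bendixson rank a successor ordinal. First, $\rk(X) \ne 0$ because $X = \Max(R) \ne \emptyset$ whereas $X^{\rk(X)} = \emptyset$. Next, if $\rk(X) = \lambda$ were a limit ordinal, then each $X^\beta$ with $\beta < \lambda$ is a nonempty closed set (nonempty since otherwise $\rk(X) \le \beta$), so the $X^\beta$, $\beta < \lambda$, form a descending chain of nonempty closed subsets of a quasicompact space; by the finite intersection property $X^\lambda = \bigcap_{\beta<\lambda}X^\beta \ne \emptyset$, contradicting scatteredness. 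Hence $\rk(X) = \alpha+1$ for some $\alpha$, and then $X^{\alpha+1} = X^{\rk(X)} = \emptyset$, so part (1) yields that $R$ has sharp degree $\alpha$.

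The only step I expect to need more than bookkeeping is this compactness argument in (2); all the rest is driven by Lemma~\ref{derivative}. The points to watch are the empty-intersection convention $\bigcap_{M \in \emptyset} R_M = F$ (so that ``$R_\alpha$ reaching the quotient field'' corresponds to ``$X^\alpha$ becoming empty''), and keeping track of which of the two equivalent formulations of ``sharp degree $\alpha$'' is in use at a given point --- that equivalence is itself immediate from the dictionary together with the nesting $R_\beta \supseteq R_\alpha$ for $\beta < \alpha$.
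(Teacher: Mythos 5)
Your proposal is correct and follows essentially the same route as the paper's proof: parts (1) and (3) are the same translation through Lemma~\ref{derivative} (with the same empty-intersection convention making $R_\alpha = F$ correspond to $X^\alpha = \emptyset$), and part (2) is the same compactness argument via Lemma~\ref{connection}(4), ruling out a limit-ordinal rank by the finite intersection property applied to the descending chain of nonempty closed derivatives.
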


\begin{proof}
(1)  Suppose that $R$ has sharp degree $\alpha$. Then $R_\alpha \subsetneq R_{\alpha+1} = F$, so that by Lemma~\ref{easy lemma}, $X^{\alpha+1} \subsetneq X^\alpha$ and  $X^{\alpha+1} = \emptyset$. Hence $X$ is scattered and
 $\rk(X) = \alpha+1$.   %Moreover, as noted at the beginning of the section, $\rk(X)$ is a countable ordinal.
Conversely, if
$X^{\alpha+1} = \emptyset$ and $\rk(X)=\alpha+1$, then Lemma~\ref{derivative} implies that $R_\alpha  \subsetneq R_{\alpha+1} = F$, and hence $R$ has sharp degree $\alpha$.

(2) Suppose $X$ is scattered and $J(R)\ne 0$, and let $\alpha$ be the smallest ordinal such that $X^{\alpha} = \emptyset$. It follows from Lemma~\ref{connection}(4) that $X$ is compact. If $\alpha$ is a limit ordinal, then $\bigcap_{\beta  < \alpha}X^\beta = \emptyset$, contrary to the fact that $X$ is compact and the $X_\beta$ are nonempty. Therefore, $\alpha$ is a successor ordinal, say $\alpha = \beta^+$, and $\rk(X) = \alpha = \beta +1$ with $X^{{\beta}+1} = \emptyset$. Thus $R$ has sharp degree $\beta$.

(3) Suppose that  $R$ has dull degree $\alpha$. Then $R_\alpha = R_{\alpha+1} \ne F$ and $R_\beta \subsetneq R_\alpha$ for all $\beta < \alpha$.  Thus, by Lemma~\ref{derivative}, $\emptyset \ne X^{\alpha+1} = X^\alpha \subsetneq  X_\beta$ for all $\beta < \alpha$. Therefore, $\rk(X) = \alpha$. % and $X^{\alpha}$ is a perfect nonempty closed, hence compact and Hausdorff, subspace of $X$. A perfect nonempty compact Hausdorff space is uncountable,
Conversely, suppose that $\rk(X) = \alpha$ and $X^\alpha \ne \emptyset$.  By Lemma~\ref{easy lemma}, for each $\beta < \alpha$,  $R_\beta \subsetneq R_\alpha = R_{\alpha+1} \ne F$. Thus $R$ has dull degree $\alpha$. The last statement follows from the first.
%
% Now suppose that $X^{\alpha+1} \ne \emptyset$. Since $X^{\alpha} = X^{\alpha+1}$, we have by Lemma~\ref{derivative} that   $R_{\alpha} = R_{\alpha+1}$.  Moreover, since $X^{\alpha+1} \ne \emptyset$, the lemma also implies that $R_{\alpha+1} \ne F$.  Finally, if $\beta < \alpha$, then $X^\beta \supsetneq X^\alpha$. Let $M \in X^\beta \setminus X^\alpha$. Then $M$ is an isolated point in $X^\beta$, which implies that $\bigcap_{N \ne M,N \in X^\alpha}R_N \not \subseteq R_M$, and hence $R_{\beta} \not \subseteq R_M$. Thus $R_\beta \ne R_\alpha$. This shows that $R$ has dull degree $\alpha$.
\end{proof}

%\begin{cor} Let $R$ be a one-dimensional Pr\"ufer domain.
%%\item[(1)] If $R$ has sharp degree $\alpha$ for some $\alpha$, then $\alpha$ is a countable ordinal.
%\item[(2)] If $R$ has dull degree $\alpha$ for some $\alpha$, then $\Max(R)$ is an uncountable set.
%\end{itemize}
%\end{cor}

%\begin{proof} (1) Let $X = \Maxi(R)$.  By Theorem~\ref{connect}, $\rk(X) = \alpha+1$. As noted at the beginning of the section, $\rk(X)$ is a countable ordinal, so (1) is clear.

%(2) By Theorem~\ref{connect}, $X^{\alpha}$ is nonempty and $\rk(X) = \alpha$. Thus $X^{\alpha}$ is a perfect nonempty closed, hence compact and Hausdorff, subspace of $X$. A perfect nonempty compact Hausdorff space is uncountable,

%\end{proof}

Combining Theorem~\ref{connect} with  results from \cite{HO}, we see in the next corollary that the ideals of almost Dedekind domains possessing a sharp degree can be decomposed as a irredundant intersections of powers of maximal ideals.

\begin{cor}  Let $R$ be an almost Dedekind domain that is not a field. If $R$ has a sharp degree, then
every nonzero proper ideal of $R$  has a unique representation as an irredundant intersection of powers of distinct maximal ideals.

\end{cor}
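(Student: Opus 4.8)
The plan is to reduce the statement to a topological condition on $\Max(R)$ and then invoke \cite{HO}. By Theorem~\ref{connect}(1), the hypothesis that $R$ has a sharp degree implies that $X:=\Maxi(R)$ is a scattered space, and every subspace of a scattered space is again scattered. So the corollary will follow once we know that an almost Dedekind domain $R$ that is not a field and whose maximal spectrum is scattered in the inverse topology has the property that every nonzero proper ideal is a unique irredundant intersection of powers of distinct maximal ideals; this is the content of \cite{HO}. Before appealing to it I would record the one decomposition that is always available. Given a nonzero proper ideal $I$, set $\Lambda=\{M\in\Max(R):I\subseteq M\}$. Since $R$ is almost Dedekind, each $R_M$ with $M\in\Lambda$ is a DVR, so $IR_M=(MR_M)^{v_M(I)}$ for a unique integer $v_M(I)\ge 1$. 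Using the identity $M^nR_M\cap R=M^n$ (valid for every maximal ideal $M$, by an elementary comaximality argument) together with $I=\bigcap_{M\in\Max(R)}(IR_M\cap R)$, one gets $I=\bigcap_{M\in\Lambda}M^{v_M(I)}$, which is already an intersection of powers of distinct maximal ideals; what is left is to pare it down to an irredundant subfamily and to prove uniqueness.

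\textbf{Irredundancy and uniqueness.} The bridge between the ideal theory and the topology is Lemma~\ref{connection}(1): a point $M_0\in\Lambda$ is isolated in the subspace $\Lambda$ exactly when some finitely generated ideal $A\subseteq M_0$ lies in no other member of $\Lambda$. Using such an $A$ together with an element of $I$ generating $IR_{M_0}$, one manufactures an element of $\bigcap_{M\in\Lambda\setminus\{M_0\}}M^{v_M(I)}$ that lies outside $M_0^{v_{M_0}(I)}$, so the intersectand $M_0^{v_{M_0}(I)}$ is not redundant; whereas if $M_0$ is a limit point of $\Lambda$, the same lemma says every finitely generated ideal inside $M_0$ meets another member of $\Lambda$, which is exactly what makes $M_0^{v_{M_0}(I)}$ superfluous. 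Thus the only possible irredundant representation is $I=\bigcap_{M\in\Sigma}M^{v_M(I)}$, where $\Sigma$ is the set of isolated points of $\Lambda$. That this intersection genuinely equals $I$ --- equivalently, that all the limit-point intersectands can be dropped simultaneously --- and that it is the \emph{unique} irredundant representation (any irredundant $I=\bigcap_{M\in\Sigma'}M^{n_M}$ being forced to have $\Sigma'=\Sigma$ and $n_M=v_M(I)$) are the two assertions that require the scatteredness of $\Lambda$; they are obtained by transfinite induction on the Cantor--Bendixson rank of $\Lambda$, removing at each stage the points that have just become isolated, and this is the part of the argument for which I would draw on \cite{HO}.

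\textbf{Main obstacle.} The delicate part is precisely that induction: unlike in a Dedekind domain there is no finite factorization to fall back on, and since $J(R)$ may be zero --- in which case, by Lemma~\ref{connection}(4), the inverse and Zariski topologies on $\Max(R)$ genuinely differ --- one must run the whole argument inside the inverse topology, controlling how the values $v_M(I)$ behave as $M$ runs over an accumulating set of maximal ideals. This is the technical core supplied by \cite{HO}; combining it with the scatteredness furnished by Theorem~\ref{connect}(1) yields the corollary.
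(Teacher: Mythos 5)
Your proposal is correct and follows essentially the same route as the paper: use Theorem~\ref{connect}(1) to conclude that $\Maxi(R)$ is scattered and then invoke \cite{HO} for the unique irredundant decomposition. The only cosmetic difference is that the paper passes to $\Max(R/I)$ and uses Lemma~\ref{connection}(3) to note that the Zariski and inverse topologies coincide there (so \cite[Corollary 3.9]{HO} applies directly), whereas you work with the subspace $\Lambda$ of $\Maxi(R)$ and sketch the internal mechanics of the cited result.
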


\begin{proof}  Suppose that $R$ is an almost Dedekind domain with  sharp degree $\alpha$ for some ordinal $\alpha$. By Theorem~\ref{connect}, $\Maxi(R)$ is scattered, and hence so is $\Maxi(R/I)$ for every nonzero ideal $I$ of $R$.  Since Lemma~\ref{connection}(3) implies that the Zariski and inverse topologies agree on $\Max(R/I)$, we have that  $\Max(R/I)$ is scattered. It follows now from \cite[Corollary 3.9]{HO} that every nonzero proper ideal of $R$ can be represented uniquely as an irredundant intersection of completely irreducible ideals.
%
%Conversely, suppose every ideal has a unique representation as an irredundant intersection of powers of distinct maximal ideals. Then by \cite[Corollary 3.9]{HO}, $R$ is an almost Dedekind domain such that $\Max(R/I) = \Maxi(R/I)$ is scattered for each nonzero proper ideal $I$ of $R$.  Thus every proper closed subset of $\Maxi(R/I)$ is scattered, and it follows that $\Maxi(R)$ is scattered. By Theorem~\ref{connect}, $R$ has sharp degree $\alpha$.
\end{proof}

\begin{cor} \label{sharp cor} If $R$ is a one-dimensional Pr\"ufer domain such that $J(R)\ne 0$ and $X=\Maxi(R)$ is countable, then there is a countable ordinal $\alpha$ such that $R$ has sharp degree $\alpha$ and $X^\alpha$ is finite.
\end{cor}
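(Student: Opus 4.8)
The plan is to observe that the hypotheses force $X = \Maxi(R)$ to be a nonempty countable compact Hausdorff (indeed Boolean) space, deduce from this that $X$ is scattered, and then read off the conclusion from Theorem~\ref{connect}. Since $J(R)\ne 0$, Lemma~\ref{connection}(4) identifies the inverse topology on $\Max(R)$ with the Zariski topology, and as recalled at the beginning of Section~5 the Zariski topology on the maximal spectrum of a one-dimensional domain with nonzero Jacobson radical makes $\Max(R)$ a Boolean space; in particular $X$ is compact Hausdorff, and by hypothesis its underlying set is countable.

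The first substantive step is to show that $X$ is scattered, i.e.\ that the perfect kernel $X^{\rk(X)}$ is empty. Suppose instead that $K := X^{\rk(X)}$ is nonempty; it is a closed subspace of $X$ with no isolated points, hence itself a nonempty compact Hausdorff space, and therefore a Baire space. But $K$ is countable, so writing $K = \bigcup_{x\in K}\{x\}$ exhibits it as a countable union of singletons, each of which is closed (since $K$ is Hausdorff) and has empty interior (since $K$ has no isolated points); this contradicts the Baire property. Hence $X^{\rk(X)} = \emptyset$, so $X$ is scattered.

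With $X$ scattered and $J(R)\ne 0$, Theorem~\ref{connect}(2) gives that $R$ has a sharp degree $\alpha$, and then Theorem~\ref{connect}(1) yields $\rk(X) = \alpha+1$ and $X^{\alpha+1} = \emptyset$. It remains to see that $\alpha$ is countable and that $X^{\alpha}$ is finite. For the former, note that for each $\beta < \rk(X)$ the minimality in the definition of $\rk(X)$ forces $X^{\beta+1}\subsetneq X^{\beta}$, so one may choose $p_\beta \in X^{\beta}\setminus X^{\beta+1}$; if $\beta < \gamma < \rk(X)$ then $p_\gamma \in X^{\gamma}\subseteq X^{\beta+1}$ while $p_\beta \notin X^{\beta+1}$, so $\beta\mapsto p_\beta$ injects $\rk(X)$ into the countable set $X$, whence $\rk(X) = \alpha+1$, and therefore $\alpha$, is a countable ordinal. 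For the latter, $X^{\alpha}$ is a closed, hence compact, subspace of $X$ whose set of limit points is $X^{\alpha+1} = \emptyset$; thus $X^{\alpha}$ is compact and discrete, hence finite.

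The one step carrying any real content is the scatteredness of $X$, for which I would invoke the Baire category theorem as above; the rest is routine bookkeeping with the results of this section, so I expect no serious obstacle beyond tracking the Cantor--Bendixson indices correctly across limit ordinals.
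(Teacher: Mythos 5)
Your proof is correct. The paper disposes of the corollary in two lines by citing the classical Mazurkiewicz--Sierpi\'nski theorem \cite[p.~18]{MS}: a countable compact Hausdorff space has Cantor--Bendixson rank a countable successor ordinal $\alpha+1$ with $X^{\alpha}$ finite and $X^{\alpha+1}=\emptyset$, after which Theorem~\ref{connect}(1) gives the sharp degree. You instead prove that topological input from scratch: Baire category on the perfect kernel to get scatteredness, the injection $\beta\mapsto p_\beta\in X^{\beta}\setminus X^{\beta+1}$ to bound $\rk(X)$ by the cardinality of $X$, and compactness plus discreteness to see $X^{\alpha}$ is finite, routing through Theorem~\ref{connect}(2) and then (1) rather than (1) alone. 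All three steps are sound (in particular $X$ is indeed compact Hausdorff here by Lemma~\ref{connection}(4) and the remarks opening Section~5, and $X^{\rk(X)}$ is perfect by definition of the rank, so the Baire argument applies). What your version buys is self-containedness and a mild generalization -- your argument never uses second countability or the full strength of \cite{MS}, only that a nonempty countable compact Hausdorff space cannot be perfect; what the paper's version buys is brevity and an explicit structure theorem (such $X$ is homeomorphic to an ordinal space $\omega^{\alpha}\cdot n+1$) that it reuses in Corollaries~\ref{ordinal exist} and~\ref{countable exist}.
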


\begin{proof} Let $X = \Maxi(R)$. Since the space $X$ is countable, compact and Hausdorff, $\rk(X)$ is
 a countable successor ordinal
  $\alpha + 1$ such that $X^{\alpha}$ is finite and $X^{\alpha+1}=\emptyset$ \cite[p.~18]{MS}.
By Theorem~\ref{connect}, $R$ has sharp degree $\alpha$. %and by Lemma~\ref{easy lemma}, $|\Max(R_\alpha)|= |X^\alpha|$, which proves the  corollary.
\end{proof}

\begin{cor} If a one-dimensional Pr\"ufer domain $R$ with $J(R)\ne 0$ has a dull degree, then $\Max(R)$ is an uncountable set.
\end{cor}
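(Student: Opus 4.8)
The plan is to argue by contraposition and reduce everything to Corollary~\ref{sharp cor} and Theorem~\ref{connect}. Suppose $\Max(R)$ is a countable set. Since the space $\Maxi(R)$ has $\Max(R)$ as its underlying point set, the space $X := \Maxi(R)$ is then countable as well, and because $J(R) \neq 0$ we may invoke Corollary~\ref{sharp cor}: there is a countable ordinal $\alpha$ such that $R$ has sharp degree $\alpha$.

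Next I would observe that a ring cannot simultaneously have a sharp degree and a dull degree. This is immediate from Theorem~\ref{connect}: part~(1) gives that if $R$ has a sharp degree then $X$ is a scattered space, whereas part~(3) gives that $R$ has a dull degree if and only if $X$ is \emph{not} scattered. Hence $R$, having the sharp degree $\alpha$, has no dull degree. Contraposing, if $R$ has a dull degree then $\Max(R)$ is uncountable, as claimed.

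I do not expect a real obstacle; the statement is essentially a corollary of the two cited results. The only subtleties are bookkeeping: that ``countable'' concerns only the underlying set and so passes from $\Max(R)$ to $\Maxi(R)$ without change, and that Corollary~\ref{sharp cor} is phrased for countable (not merely countably infinite) $X$, so the finite case needs no separate treatment. One could also bypass the scattered/non-scattered dichotomy and argue directly from the definitions: for the sharp degree $\alpha$ one has $R_{\alpha+1} = F$, and since the chain $(R_\gamma)_\gamma$ is increasing, absorbs $F$, and satisfies $R_\gamma \neq F$ for $\gamma < \rk(X)$ (Lemma~\ref{derivative}), no ordinal $\beta$ can satisfy the defining condition $R_\beta = R_{\beta+1} \neq F$ of a dull degree.
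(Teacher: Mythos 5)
Your proof is correct and follows the paper's own argument exactly: contrapose, apply Corollary~\ref{sharp cor} to get a sharp degree, and note that a sharp degree excludes a dull degree. The only difference is that you spell out (via the scattered/non-scattered dichotomy of Theorem~\ref{connect}) the incompatibility of sharp and dull degrees, which the paper leaves implicit.
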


\begin{proof} If $\Max(R)$ is countable, then, by Corollary~\ref{sharp cor}, $R$ has a sharp degree, in which case $R$ does not have dull degree.
\end{proof}

In the next corollary we recover  Loper and Lucas' existence results for almost Dedekind domains with specified sharp degrees (see \cite[Section 3]{LL}). In their case they restrict to finite sharp degrees; in ours, we allow sharp degrees of any ordinal.

%In the next corollary we recover  Loper and Lucas' existence results for almost Dedekind domains with specified sharp degrees (see \cite[Section 3]{LL}). In their case they restrict to finite sharp degrees; in ours, we allow  sharp degrees of any successor ordinal.

\begin{cor} \label{ordinal exist}
Every ordinal number occurs as the sharp degree of an SP-domain.
\end{cor}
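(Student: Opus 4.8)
The plan is to deduce the claim from the topological criterion in Theorem~\ref{connect}(1). Fix an ordinal $\alpha$. By that theorem, a one-dimensional Pr\"ufer domain $R$ has sharp degree $\alpha$ exactly when $X=\Maxi(R)$ satisfies $\rk(X)=\alpha+1$ and $X^{\alpha+1}=\emptyset$; and by Lemma~\ref{connection}(4), if $J(R)\neq 0$ then the inverse and Zariski topologies on $\Max(R)$ coincide, so $\Maxi(R)$ and $\Max(R)$ agree as topological spaces. Thus it suffices to (i) produce a Boolean space $X_\alpha$ with $\rk(X_\alpha)=\alpha+1$ and $X_\alpha^{\alpha+1}=\emptyset$, and (ii) realize $X_\alpha$ as the maximal spectrum of an SP-domain with nonzero Jacobson radical.

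For (i) I would take $X_\alpha=[0,\omega^\alpha]$ with the order topology, where $\omega^\alpha$ denotes ordinal exponentiation (so $\omega^0=1$ and $X_0=\{0,1\}$ is the two-point discrete space). First I would record that $X_\alpha$ is a Boolean space: it is compact Hausdorff, and it is scattered because every nonempty set of ordinals has a least element, which is isolated in the induced topology; a scattered compact Hausdorff space is zero-dimensional, hence Boolean. Then I would compute the Cantor--Bendixson derivatives by transfinite induction on $\beta$, showing that $X_\alpha^{\beta}=\{\gamma:0<\gamma\leq\omega^\alpha,\ \omega^\beta\mid\gamma\}$ for $0<\beta\leq\alpha$: for $\beta<\alpha$ this set, with its induced topology, is again an ordinal interval $[0,\omega^{\alpha'}]$ with $\beta+\alpha'=\alpha$ (via $\delta\mapsto\omega^\beta\delta$), whose limit points are the nonzero multiples of $\omega^{\beta+1}$; at a limit stage $\lambda$ one uses $X_\alpha^{\lambda}=\bigcap_{\beta<\lambda}X_\alpha^\beta$ together with Cantor normal form. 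In particular $X_\alpha^\alpha=\{\omega^\alpha\}$ and $X_\alpha^{\alpha+1}=\emptyset$, so $\rk(X_\alpha)=\alpha+1$, as required.

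For (ii) I would invoke the construction of Olberding \cite[Section~3]{Olb} recalled before Theorem~\ref{characterization}: for every Boolean space $X$ there is an SP-domain $R$ with $J(R)\neq 0$ whose maximal spectrum is homeomorphic to $X$ (the cited construction realizes $X$ as $\Spec(R/J(R))$, which is homeomorphic to $\Max(R)$ since $R$ is one-dimensional and $J(R)\neq 0$). Applying this to $X=X_\alpha$, and using Lemma~\ref{connection}(4) to identify $\Maxi(R)$ with $\Max(R)\cong X_\alpha$, Theorem~\ref{connect}(1) then yields that $R$ is an SP-domain of sharp degree $\alpha$.

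The main obstacle is the Cantor--Bendixson computation in step (i), and within it the mild technical point that each derived set $X_\alpha^\beta$ carries the order topology it inherits as a closed subspace, so that the induction on $\beta$ iterates correctly; this is classical (the core fact being that $[0,\omega^\alpha]$ is a scattered compact space of Cantor--Bendixson rank $\alpha+1$ with top derivative $\{\omega^\alpha\}$), and since all the ring-theoretic content is already available from \cite{Olb} and Theorem~\ref{characterization}, there is no serious difficulty beyond assembling these ingredients.
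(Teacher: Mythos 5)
Your proposal is correct and follows essentially the same route as the paper: realize the ordinal space $\omega^\alpha+1$ (a Boolean space of Cantor--Bendixson rank $\alpha+1$ with empty $(\alpha+1)$-st derivative) as $\Max(R)$ for an SP-domain $R$ with $J(R)\ne 0$ via the construction of \cite[Section 3]{Olb}, and then apply Theorem~\ref{connect}(1). The only differences are cosmetic: the paper cites \cite[Theorem 2.6(1)]{BM} for the rank computation you carry out by hand, and it treats $\alpha=0$ separately via Dedekind domains rather than uniformly as you do.
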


\begin{proof}
Note that every Dedekind domain that is not a field  is an SP-domain that has sharp degree $0$. Let $\alpha$ be a nonzero ordinal number, and let $\omega$ denote the first infinite ordinal. The ordinal space (hence scattered space) $X=\omega^{\alpha}+1$ satisfies $\rk(X) = \alpha+1$ by \cite[Theorem 2.6(1)]{BM}. Since $\omega^{\alpha}+1$ is a successor ordinal, the ordinal space $X$ is compact. Thus $X$ is a Boolean space, and by \cite[Section 3]{Olb} there exists an SP-domain $R$ with $J(R)\not=0$ such that $\Max(R)$ is homeomorphic to $X$. Therefore, the corollary follows from Theorem~\ref{connect}(1).
\end{proof}

%\begin{cor} \label{ordinal exist}
%An ordinal $\alpha$ occurs as the sharp degree of a one-dimensional Pr\"ufer domain (even an SP-domain) if and only if $\alpha$ is a successor ordinal.
%\end{cor}

%\begin{proof}
%By Theorem~\ref{connect}(2),  the sharp degree of a one-dimensional Pr\"ufer domain is a successor ordinal. Conversely, if $\alpha$ is a successor ordinal, then there exists an ordinal  space (hence scattered space) $X$ such that $\rk(X) = \alpha$ \cite[Theorem 2.6(2)]{BM}. Since $\alpha$ is a successor ordinal, the ordinal space $X$ is compact.  Thus $X$ is a Boolean space, and by \cite[Section 3]{Olb}, there exists an SP-domain $R$ such that $\Max(R)$ is homeomorphic to $X$. Therefore, the corollary follows from Lemma~\ref{derivative} and Theorem~\ref{connect}.
%\end{proof}

By restricting to countable ordinals, we are also able to prescribe  the size of the maximal spectrum of the penultimate ring in the sequence of the $R_\alpha$'s.

\begin{cor} \label{countable exist}
For  every countable ordinal $\alpha$ and finite positive integer $n$, there exists an SP-domain $R$ of sharp degree $\alpha$ such that $J(R) \ne 0$,  $\Max(R)$ is countable and  $|\Max(R_\alpha)| = n$.
\end{cor}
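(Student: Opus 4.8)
The plan is to construct the desired SP-domain by building an appropriate Boolean space and invoking the machinery developed in the paper. By Theorem~\ref{connect}(1), an SP-domain $R$ with $J(R)\ne0$ has sharp degree $\alpha$ precisely when $X=\Maxi(R)$ (which, since $J(R)\ne0$, carries the same topology as $\Max(R)$ by Lemma~\ref{connection}(4)) satisfies $\rk(X)=\alpha+1$ and $X^{\alpha+1}=\emptyset$; moreover by Lemma~\ref{derivative}, $\Max(R_\alpha)\setminus\{0\}$ corresponds to $X^\alpha$. So it suffices to produce, for each countable ordinal $\alpha$ and each positive integer $n$, a \emph{countable} Boolean space $X$ with $\rk(X)=\alpha+1$, $X^{\alpha+1}=\emptyset$ and $|X^\alpha|=n$, and then apply \cite[Section 3]{Olb} to realize $X$ as $\Max(R)$ for an SP-domain $R$ with nonzero Jacobson radical. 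The countability of $\Max(R_\alpha)$ is automatic since $\Max(R_\alpha)\setminus\{0\}\subseteq X$ is countable, and $|\Max(R_\alpha)|=|X^\alpha|=n$.

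First I would recall, as in the proof of Corollary~\ref{ordinal exist}, that the ordinal space $\omega^\alpha+1$ is a compact, countable, scattered (hence Boolean) space with $\rk(\omega^\alpha+1)=\alpha+1$ and a single point (namely $\omega^\alpha$ itself) in its $\alpha$-th Cantor--Bendixson derivative, by \cite[Theorem 2.6(1)]{BM}. Thus $\omega^\alpha+1$ already handles the case $n=1$. For general $n$, I would take the topological disjoint union (coproduct) of $n$ copies of $\omega^\alpha+1$; call it $X$. A finite disjoint union of compact Hausdorff zero-dimensional spaces is again compact Hausdorff zero-dimensional, so $X$ is a Boolean space, and it is countable. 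The Cantor--Bendixson derivative commutes with finite disjoint unions: $(Y_1\sqcup\cdots\sqcup Y_n)^\beta=Y_1^\beta\sqcup\cdots\sqcup Y_n^\beta$ for every ordinal $\beta$, since a point is isolated in the union iff it is isolated in its own summand. Hence $X^\alpha$ consists of exactly $n$ points (one from each copy), $X^{\alpha+1}=\emptyset$, and $\rk(X)=\alpha+1$.

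With this $X$ in hand, \cite[Section 3]{Olb} provides an SP-domain $R$ with $J(R)\ne0$ and $\Max(R)$ homeomorphic to $X$. Then $\Maxi(R)$ is homeomorphic to $X$ as well (Lemma~\ref{connection}(4)), so $\rk(\Maxi(R))=\alpha+1$ with $\Maxi(R)^{\alpha+1}=\emptyset$, and Theorem~\ref{connect}(1) gives that $R$ has sharp degree $\alpha$. Finally $\Max(R)$ is countable, and by Lemma~\ref{derivative}(1) we have $\Max(R_\alpha)\setminus\{0\}=\{M R_\alpha: M\in X^\alpha\}$, which has $n$ elements; since $R_\alpha$ is not a field (as $\alpha<\alpha+1=\rk(X)$ forces $X^\alpha\ne\emptyset$), $0$ is not a maximal ideal, and in fact $\Max(R_\alpha)$ itself has exactly $n$ elements, so $|\Max(R_\alpha)|=n$.

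The only genuinely delicate point is the bookkeeping for the disjoint-union construction: verifying that finite topological coproducts of Boolean spaces are Boolean and that Cantor--Bendixson derivatives distribute over them. Both are routine, but one should be careful that the single accumulation point $\omega^\alpha$ of $\omega^\alpha+1$ indeed survives exactly $\alpha$ derivatives and no more --- this is exactly the content of \cite[Theorem 2.6(1)]{BM} --- and that passing to $R$ via \cite[Section 3]{Olb} preserves the homeomorphism type of the maximal spectrum, which is precisely what that construction guarantees. Everything else is an immediate application of Theorems~\ref{connect} and Lemma~\ref{derivative}.
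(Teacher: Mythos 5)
Your proposal is correct and is essentially the paper's own proof: the paper takes the ordinal space $\omega^\alpha\cdot n+1$, which (for $\alpha\geq 1$) is homeomorphic to your disjoint union of $n$ copies of $\omega^\alpha+1$, and then applies the same realization result from \cite[Section 3]{Olb} together with Theorem~\ref{connect} and Lemma~\ref{derivative}. The only difference is packaging --- you verify the Cantor--Bendixson computation via the (correct) observation that derivatives distribute over finite coproducts, while the paper cites \cite{MS} directly for the ordinal space.
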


\begin{proof}  Let $\omega$ denote the first infinite ordinal.  The ordinal space $X=\omega^\alpha \cdot n + 1$ is then a countable compact Hausdorff space for which $X^{\alpha}$ has $n$ elements and $X^{\alpha+1} = \emptyset$ \cite{MS}.  Every ordinal space is totally disconnected and Hausdorff, and since the
 ordinal $\omega^\alpha \cdot n + 1$ is a successor ordinal, $X$ is compact. Thus $X$ is a Boolean space and  the proof now finishes just as the proof of Corollary~\ref{ordinal exist}.
%
% hus $X$ is a Boolean space. By \cite[Section 3]{Olb}, there exists an SP-domain $R$ such that $\Max(R)$ is homeomorphic to $X$. Therefore, the corollary follows from Lemma~\ref{derivative} and Theorem~\ref{connect}.
\end{proof}

%\begin{rem} {\em It is possible for an SP-domain to have uncountable sharp degree. Let $\omega_1$ be the first uncountable ordinal. As in the proof of Corollary~\ref{countable exist}, $X = \omega_1 + 1$ is a Boolean space. Moreover, $\rk(X) = \omega_1+1$. As in the corollary, there then exists an SP-domain $R$ having sharp degree $\omega_1$.}
%\end{rem}

\bigskip
\textbf{Acknowledgement.} The third author of this work was supported by the Austrian Science Fund FWF, Project number P26036-N26.

%\section{SP-domains arising as holomorphy rings}

%{\bf I have some results on this that could be included if we decide to go in this direction. The idea is to show how SP-domains arise when intersecting valuation rings arising from ``curves'' in Spec($D$), where $D$ is a two-dimensional Noetherian domain. Unfortunately, the ideas need a good bit of technical preliminaries and the techniques don't fit well with the rest of the paper, so we may not want to go in this direction.}

\end{document}